\documentclass{amsart}

\usepackage[margin=3cm, right=4cm]{geometry}
\usepackage{amssymb, amsmath, amsthm}
\usepackage{mathabx}
\usepackage{graphicx}
\usepackage[utf8]{inputenc}
\usepackage{hyperref}
\usepackage{young}
\usepackage{epstopdf}
\usepackage{mathtools}
\usepackage{stmaryrd}
\usepackage{rotating}
\usepackage{tabularx}
\usepackage[mark=o]{dynkin-diagrams}
\usepackage{subcaption} 
\usepackage{longtable}
\usepackage{rotating}
\usepackage[all]{xy}
\usepackage{dsfont}
\usepackage{enumitem}
\usepackage{cleveref}
\usepackage{multirow}
\usepackage{float}
\usepackage{array}
\usepackage{booktabs}
\usepackage[linesnumbered, vlined, boxed, ruled]{algorithm2e}
\usepackage{algorithmic}

\newtheorem{thm}{Theorem}[section]
\newtheorem{cor}[thm]{Corollary}
\newtheorem{lem}[thm]{Lemma}
\newtheorem{prop}[thm]{Proposition}

\newtheorem{rem}[thm]{Remark}
\newtheorem{ex}{Example}
\numberwithin{equation}{section}

\newcommand{\C}{\mathbb{C}}
\newcommand{\R}{\mathbb{R}}
\newcommand{\Z}{\mathbb{Z}}
\newcommand{\N}{\mathbb{N}}

\newcommand{\weyl}{\mathcal{W}}
\newcommand{\schub}{\mathcal{S}}

\renewcommand{\mod}{\ \mathrm{mod}\ }
\newcommand{\g}{\mathfrak{g}}
\newcommand{\Aab}{A_{\alpha+\beta}}

\newcommand{\mab}{m_{\alpha,\beta}}
\newcommand{\nab}{n_{\alpha,\beta}}

\newcommand{\height}{\mathrm{ht}}
\newcommand{\rr}[1]{{\mathbf{#1}}}
\newcommand{\rrh}[1]{{\widetilde{\mathbf{#1}}}}
\newcommand{\rrt}[2]{\widehat{\mathbf{#1}}_{#2}}
\newcommand{\upla}[1]{\mathbf{#1}}

\DeclareMathOperator{\ad}{ad}
\DeclareMathOperator{\Ad}{Ad}

\allowdisplaybreaks

\title[Integral Homology and Poincaré Polynomials of Real Flag Manifolds]{Integral Homology and Poincaré Polynomials of classical and exceptional Real Flag Manifolds}

\author{Jordan Lambert Silva}
\address{UFF}
\email{jordanlambert@id.uff.br}
\author{Lonardo Rabelo}
\address{UFJF}
\email{lonardo.rabelo@ufjf.br}

\thanks{This work was supported by the FAPERJ (Carlos Chagas Filho Foundation for Supporting Research in the State of
Rio de Janeiro) no. 010.002602/2019, and FAPEMIG (Foundation for Supporting Research in the State of
Minas Gerais) RED-00133-21. Research developed with the assistance of CENAPAD-SP (National Center for High Performance Computing in São Paulo), project UNICAMP/FINEP--MCTI.
}

\subjclass[2020]{Primary 00X00}

\keywords{Key words}

\begin{document}

\begin{abstract}
This paper computes the integral homology of real flag manifolds associated with split real forms of classical and exceptional semisimple Lie algebras. Using the cellular homology provided by the Bruhat decomposition, we introduce a unified framework to systematically determine the coefficients of the boundary operator, explicitly resolving the issue of calculating their signs. This is achieved by computing the degree of change of coordinate maps between different reduced decompositions of Weyl group elements, analyzing commutation and braid relations through Lie bracket computations and exponential identities. By adopting the normal form of Weyl group elements as a canonical choice for reduced decompositions, we establish an explicit algorithmic implementation for these homology computations. As a direct application, we derive the Poincaré polynomials for the classical types $B_n, C_n$, and $D_n$ for $n \leqslant 7$, and for the exceptional types $F_4, E_6$, and $E_7$. With the aid of these polynomials, we address the question of the orientability of split real flag manifolds of exceptional Lie algebras.
\end{abstract}

\maketitle

\section{Introduction}

This article is devoted to the computation of the integral homology of real flag manifolds associated with split real forms of both classical and exceptional Lie algebras. We follow the approach developed by the second author and SanMartin \cite{RSm19}, which uses the Bruhat decomposition together with explicit parameterizations of Schubert cells to derive a formula for the coefficients of the boundary operator in cellular homology. Several aspects of this topic are worth highlighting.

The first aspect is the degree of difficulty that real flag manifolds present compared to the complex ones. The latter have cells only in even dimensions, so their homology is generated by Schubert cells, whereas the former have cells in all dimensions, and the boundary operator is generally non-trivial. 

Another important aspect concerns the different approaches to working with real flag manifolds and the choice of (co)homology theory. The pioneering work of Kocherlakota \cite{Koc95} introduces an algorithm to compute coefficients based on the Morse-Witten complex. However, it remains incomplete, as it does not address the determination of the signs of the coefficients. Subsequently, Casian-Stanton \cite{CS99} obtained the differentials using the machinery of infinite-dimensional representation theory. These works represent substantial progress and play a key role in the development of the subject. Nonetheless, a careful examination of the literature reveals a lack of explicit accounts of how these results can be effectively implemented in concrete calculations. This absence may be attributed to the inherent challenges posed by these approaches, particularly when dealing with the complexities involved in obtaining the signs. 

As observed by Matszangosz \cite{Mat19}, the determination of signs plays a crucial role in effective calculations. This issue was theoretically addressed in \cite{RSm19} using a cellular homology approach based on a more detailed analysis of the Bruhat order in Weyl groups. In \cite{Mat19}, an alternative method for computing signs is given using the cohomology of Vasiliev's complex. Through explicit calculations for flag manifolds of type $A$, this work provides, for the first time, an algorithm that effectively addresses the issue of signs, at least within this context. More recently, Hudson-Matszangosz-Wendt \cite{HMW23} have shown that all torsion in integral cohomology is $2$-torsion by computing the Witt-sheaf cohomology rings of partial flag manifolds of type $A$. From a general perspective, in rational cohomology, it is worth mentioning the work of He \cite{He2020}, which describes the cohomology rings of flags of type $A$ in terms of characteristic classes and exterior algebras related to the cohomology rings of real Stiefel manifolds. 

Independently of the approach, a practical challenge is translating theoretically obtained formulas into effective computations. So far, the authors' work has relied on developing combinatorial tools to describe the Weyl group of each specific type. The most notable case to date has been that of split real flag manifolds of type $A$, for which a formula for the coefficients of the cellular boundary operator was obtained using certain diagrams associated with the permutation code (also called Lehmer code) of $S_n$ (see \cite{LR26}).

The breakthrough in this work lies in offering a unified approach that applies uniformly to any (split) real flag manifold. The computation of the boundary operator starts with a fixed choice of reduced decompositions for the elements of the Weyl group parameterizing the Schubert cells. The coefficients are expressed in terms of covering pairs, that is, pairs $w, w'$ such that $w \leq w'$ in the Bruhat order and $\ell(w) = \ell(w') + 1$. However, the decomposition fixed for $w'$ does not necessarily agree with the one induced by the choice made for $w$. Consequently, the coefficient formula involves an additional factor accounting for this discrepancy, which is directly responsible for the sign (see Theorem \ref{thm:split}). A key contribution of this paper is to provide an explicit and systematic method to compute this factor, given by the degree of a map (the change of coordinate map) relating two reduced decompositions of the same Schubert cell. We determine these maps by decomposing the problem according to the type of operation in the Weyl group: commutation, short, and long braid relations (see Section \ref{sec:changeofcoordinate}). This generalizes the method developed for type $A$ (\cite{LR26}, Section 4.3) and required computations involving Lie brackets and exponential identities (see Section \ref{sec:liebrackets}). As a result, we obtain explicit formulas for the degree factor that determines the sign (see Propositions \ref{prop:degree1} and \ref{prop:degree}).

Another contribution of this work is the use of the normal form of Weyl group elements as a canonical choice of reduced decompositions.
 This choice not only enables an efficient computational implementation of the homology groups, but also provides a precise description of the elementary moves (commutations and braid relations) and their effect on the degree (see Subsection \ref{subsec:normalform}). We present an explicit algorithm based on this framework for carrying out the computations (see Subsection \ref{subsec:computational}).

As an application of these methods, and with the aid of computational techniques, we derive the Poincaré polynomials for real flag manifolds associated with split real forms of semisimple Lie groups. We provide formulas for the classical types \(B_n, C_n,\) and \( D_n\), for $n\leqslant 7$, as well as explicit formulas for the exceptional types \(F_4, E_6,\) and \(E_7\). Notably, in several cases, these formulas contain expressions of type $A$. To the best of our knowledge, this is the first time such formulas have appeared in the literature. Due to their importance and relevance, we have chosen to present these results already in the next section. Finally, once the polynomials have been computed, we were able to validate the results by performing a simple dimension count to determine which real flag manifolds of split real forms of exceptional Lie algebras are orientable.

The complete source code and supporting results are publicly available on GitHub and archived on Zenodo \cite{lambert_zenodo}.

The paper is organized as follows. We present the formulas for the Poincaré polynomials in Section \ref{sec:formulas}. Section \ref{sec:preliminaries} introduces the basics of the Bruhat decomposition and the cellular structure given by the Schubert cells. Section \ref{sec:liebrackets} is devoted to deriving Lie bracket formulas and exponential identities. In Section \ref{sec:changeofcoordinate}, we describe the change of coordinate map for the three types of moves: commutation, short, and long braid.
Section \ref{sec:homology} explains the details regarding the computation of the cellular homology, introduces the normal forms, and provides an algorithmic framework for determining boundary maps. Section \ref{sec:orientability} concerns the orientability of split flags of some exceptional Lie algebras.

\subsection{Flag Manifolds}

Flag manifolds are defined as homogeneous spaces $G/P_\Theta$, where $G$ is a non-compact semisimple Lie group and $P_\Theta$ is a parabolic subgroup of $G$.
The flag manifolds for various groups $G$ with non-compact real semisimple Lie algebra $\mathfrak{g}$ are identical.
For further details in notation and the basics on flag manifolds used here, see \cite{RSm19, LR22a}.

If $\mathfrak{g} = \mathfrak{k} \oplus \mathfrak{s}$ is a Cartan decomposition, let $\mathfrak{a}$ be a maximal abelian sub-algebra contained in $\mathfrak{s}$. A sub-algebra $\mathfrak{h} \subset \mathfrak{g}$ is said to be a Cartan sub-algebra if $\mathfrak{h}_{\mathbb{C}}$ is a Cartan sub-algebra of $\mathfrak{g}_{\mathbb{C}}$. Let $\Pi$ be the set of roots of the pair $(\mathfrak{g}, \mathfrak{a})$ and  fix a simple system of roots $\Sigma \subset \Pi$. If $\mathfrak{h} = \mathfrak{a}$ is a Cartan sub-algebra of $\mathfrak{g}_\C$, we say that $\mathfrak{g}$ is a split real form of $\mathfrak{g}_{\mathbb{C}}$. A property of split real forms is that all restricted root spaces $\mathfrak{g}_\alpha$ are one-dimensional (i.e., the root multiplicities are $d_\alpha = 1$ for all $\alpha \in \Pi$). Consequently, for each simple root $\alpha \in \Sigma$, the rank-one subalgebra $\mathfrak{g}(\alpha)$ generated by $\mathfrak{g}_{\pm \alpha}$ is isomorphic to $\mathfrak{sl}(2,\mathbb{R})$.

Denote by $\Pi^{\pm}$, respectively, the set of positive and negative roots, and by $\mathfrak{a}^+$ the Weyl chamber $\mathfrak{a}^+ = \{ H \in \mathfrak{a} \colon \alpha (H) > 0 \mbox{ for all }  \alpha \in \Sigma \}$. The direct sum of root spaces corresponding to the positive roots is denoted by $\mathfrak{n} = \sum_{\alpha \in \Pi^+} \mathfrak{g}_\alpha$. The Iwasawa decomposition of $\mathfrak{g}$ is given by $\mathfrak{g} = \mathfrak{k} \oplus \mathfrak{a} \oplus \mathfrak{n}$. The notations $K$ and $N$ refer, respectively, to the connected subgroups whose Lie algebras are $\mathfrak{k}$ and $\mathfrak{n}$.

 A minimal parabolic sub-algebra of $\mathfrak{g}$ is given by $\mathfrak{p} = \mathfrak{m} \oplus \mathfrak{a} \oplus \mathfrak{n}$, where $\mathfrak{m}$ is the centralizer of $\mathfrak{a}$ in $\mathfrak{k}$. Let $P$ be the
 minimal parabolic subgroup with Lie algebra $\mathfrak{p}$. Note that $P$ is the normalizer of $\mathfrak{p}$ in $G$.
 We call $\mathbb{F} = G/P$ the maximal flag manifold of $G$ and denote by $b_0$ the base point $1 \cdot P$ in $G/P$.

 Now, assume that $\Theta\subset \Sigma$ is any subset of simple roots. Such a choice provides a very interesting way to obtain several flag manifolds, called partial flag manifolds, as we now explain. Denote by $\mathfrak{g}(\Theta)$ the semisimple Lie algebra generated by $\mathfrak{g}_{\pm \alpha}$, $\alpha \in \Theta$. Let $G(\Theta)$ be the connected group with Lie algebra $\mathfrak{g}(\Theta)$. Let $\mathfrak{n}_\Theta$ be the sub-algebra generated by the root spaces $\mathfrak{g}_{-\alpha}$, $\alpha \in \Theta$, and consider $\mathfrak{p}_\Theta = \mathfrak{n}_\Theta \oplus \mathfrak{p}$. The normalizer $P_\Theta$ of $\mathfrak{p}_\Theta$ in $G$ is a standard parabolic subgroup that contains $P$. Finally, the corresponding flag manifold $\mathbb{F}_\Theta = G/P_{\Theta}$ is called a partial flag manifold of $G$ of type $\Theta$. We denote by $b_\Theta$ the base point $1\cdot P_\Theta$ in $G/P_\Theta$.  Given a choice of order for the set $\Sigma=\{\alpha_1,\ldots, \alpha_n\}$, denote by $\mathbf{k} = \mathbf{k}(\Theta) = (k_1, \ldots, k_r)$ the indices of the complement of $\Theta$; i.e., $\Sigma \setminus \Theta = \{\alpha_{k_1}, \ldots, \alpha_{k_r}\}$, where $k_1 < k_2 < \cdots < k_r$.

 A partial flag manifold is the base space for the natural equivariant fibration $\pi _{\Theta }:\mathbb{F}\rightarrow \mathbb{F}_{\Theta }$, whose fiber is $P_{\Theta }/P$. This fiber is a flag manifold of a semisimple Lie group $M_{\Theta }\subset G$, whose rank is the order of $\Theta $. 

\section{Main computational results}\label{sec:formulas}

Let $G$ be a non-compact semisimple Lie group whose Lie algebra is a split real form. For a given subset of simple roots $\Theta$, we consider the partial flag manifold $\mathbb{F}_\Theta$.

Given $n\in\N$, the $q$-analogue of $n$ and $n!$ are, respectively, the polynomials $(n)_t = 1 + t + \cdots + t^{n-1}=\frac{1-t^n}{1-t}$ and $(n)_t!=(1)_t(2)_t\cdots (n)_t = (1+t)(1+t+t^2)\cdots (1+t+ \cdots + t^{n-1})$.
Given $n_1,\dots,n_k$ non-negative integers such that $n=n_1+\cdots+n_k$, define the $q$-multinomial coefficient by
\begin{equation*}
    \binom{n}{n_1,\dots,n_k}_{t} = \frac{(n)_t!}{(n_1)_t!\cdots (n_k)_t!}.
\end{equation*}

Given $n$ positive and $\mathbf{k} = (0<k_1<\cdots<k_r\leqslant n)$, define the number $X_n(\mathbf{k})$ by
\begin{equation}\label{eq:Lvalue}
X_n(\mathbf{k}) = \left\lfloor \dfrac{k_{1}}{2} \right\rfloor + \left\lfloor \dfrac{k_{2}-k_{1}}{2} \right\rfloor + \cdots + \left\lfloor \dfrac{k_{r}-k_{r-1}}{2} \right\rfloor + \left\lfloor \dfrac{n+1-k_{r}}{2} \right\rfloor,
\end{equation}
where $\lfloor \cdot \rfloor$ is the floor function. Consider  $X_n(\emptyset) = 0$. 

For a flag manifold $\mathbb{F}_{\Theta}$, the integer homology groups are
\begin{equation*}
    H_i(\mathbb{F}_{\Theta}, \Z) \cong \Z^{\beta_i} \oplus (\Z_2)^{T_i}
\end{equation*}
where $\beta_i$ are the Betti numbers and $(\Z_2)^{T_i}$ is the torsion. The Poincaré polynomial in integer coefficients is the generating polynomial of the Betti numbers.

We begin by recalling the Poincaré polynomials of real flag manifolds of type $A_n$, as obtained in \cite{He2020}.

\begin{thm}[\cite{He2020}, Prop. 3.14]
The Poincaré polynomial for partial flag manifolds of the flag manifold $Sl(n+1,\R)/P_\Theta$ is
\begin{equation*}
PA_n^\Theta(t) = q_A(t) \cdot \binom{X_n(\mathbf{k})}{ \left\lfloor \frac{k_{1}}{2} \right\rfloor , \left\lfloor \frac{k_{2}-k_{1}}{2} \right\rfloor , \cdots , \left\lfloor \frac{n+1-k_{r}}{2} \right\rfloor}_{t^{4}}\cdot \prod_{i = X_n(\mathbf{k})}^{\left\lfloor \frac{n}{2}\right\rfloor-1} (1+t^{4i+3}),
\end{equation*}
where
\begin{equation*}
    q_A(t)= 
    \begin{cases}
        1+t^{n}, & \mbox{ if $n$ odd and $X_n(\mathbf{k})<\frac{n+1}{2}$}\\
        1, & \mbox{otherwise.}
    \end{cases}
\end{equation*}
\end{thm}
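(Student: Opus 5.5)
Since the Betti numbers are the ranks of $H_i(\mathbb{F}_\Theta,\mathbb{Q})$, I would compute the rational cohomology of $\mathbb{F}_\Theta = Sl(n+1,\R)/P_\Theta$ through its compact model rather than through the Bruhat cells. The first step is to write $\mathbb{F}_\Theta = SO(n+1)/S(O(d_1)\times\cdots\times O(d_{r+1}))$, where $d_1=k_1$, $d_j=k_j-k_{j-1}$ and $d_{r+1}=n+1-k_r$. This is the manifold of flags of subspaces of dimensions $k_1<\cdots<k_r$ in $\R^{n+1}$. Rationally, the finite component group $S(O(d_1)\times\cdots)/(SO(d_1)\times\cdots)$ acts on the cohomology of the oriented cover $\widetilde{\mathbb{F}} = SO(n+1)/(SO(d_1)\times\cdots\times SO(d_{r+1}))$. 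Transfer then gives $H^*(\mathbb{F}_\Theta;\mathbb{Q}) = H^*(\widetilde{\mathbb{F}};\mathbb{Q})^{\Gamma}$, where $\Gamma\cong(\Z_2)^{r}$ is generated by reflections in the blocks.

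The second step is to compute $H^*(\widetilde{\mathbb{F}};\mathbb{Q})$ with the Cartan/Borel model for the homogeneous space $G/H$, where $G=SO(n+1)$ and $H=\prod SO(d_j)$. The rational cohomology ring of $BSO(d)$ is polynomial in the Pontryagin classes $p_1,\dots,p_{\lfloor d/2\rfloor}$, together with the Euler class $e$ when $d$ is even, subject to $e^2=p_{d/2}$. Under $\Gamma$, Euler classes change sign and Pontryagin classes are invariant. Taking invariants therefore kills every odd-degree expression in the Euler classes, except possibly the total Euler class when $n+1$ is even. This leaves the Pontryagin classes of the blocks, so the relevant ring is $\bigotimes_j \mathbb{Q}[p_1,\dots,p_{\lfloor d_j/2\rfloor}]$, with generators in degrees $4i$. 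The total rank is $\sum_j\lfloor d_j/2\rfloor = X_n(\mathbf{k})$. The ambient $BSO(n+1)$ contributes the Pontryagin classes $p_1,\dots,p_{\lfloor n/2\rfloor}$, plus the Euler class if $n+1$ is even.

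The third step applies the standard result for a pair whose restriction map is "partly surjective". The first $X_n(\mathbf{k})$ Pontryagin classes of $G$ restrict to a regular sequence in the invariant ring. The quotient is the ring of the $q$-multinomial form: for symmetric-function reasons, $\bigotimes_j\mathbb{Q}[p]^{(d_j)}$ modulo the first $X_n(\mathbf{k})$ elementary classes has Poincaré series equal to the multinomial $\binom{X_n(\mathbf{k})}{\lfloor d_1/2\rfloor,\dots}$ in $t^4$. The remaining generators of $G$, namely $p_i$ for $X_n(\mathbf{k})< i\le\lfloor n/2\rfloor$, lie in the ideal, so each one transgresses to a free exterior generator of degree $4i-1$. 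This produces the factor $\prod_{i=X_n(\mathbf{k})}^{\lfloor n/2\rfloor-1}(1+t^{4i+3})$ after reindexing. The Euler class of $SO(n+1)$ exists when $n$ is odd. If some block has odd dimension, which for $n$ odd is equivalent to $X_n(\mathbf{k})<(n+1)/2$, that Euler class restricts to zero in the $\Gamma$-invariant part and gives an extra exterior generator of degree $n$. This is the factor $q_A(t)=1+t^n$. If instead every block is even, the top Euler class equals the product of the block Euler classes, which is $\Gamma$-invariant and consistent, so no factor appears.

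The main obstacle is the third step. One must show that the restricted Pontryagin classes form a regular sequence in the invariant ring, and that the remaining generators of $G$, including the Euler class, map into the ideal. This is what makes the model formal of the stated shape. I would handle it by passing to the maximal torus, identifying the invariants with symmetric polynomials in squared weights $x_i^2$ grouped by block, and using the classical fact that such a block-symmetric ring is free over the fully symmetric one.
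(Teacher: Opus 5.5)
You are right, and there is no gap. The paper gives no proof of this statement: it is quoted from \cite{He2020}. He describes the rational cohomology through characteristic classes and an exterior algebra of Stiefel type, which is in substance the computation you propose.

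Your route does differ from the machinery the paper itself develops. The paper works with the Bruhat-cell chain complex, whose coefficients are $0,\pm 2$ and whose signs are fixed by the degree computations of Proposition~\ref{prop:degree} and Algorithm~\ref{alg:degree_calculation}. The trade-off is as follows.
\begin{itemize}
\item The Cartan model for $SO(n+1)/S(O(d_1)\times\cdots\times O(d_{r+1}))$ gives the closed formula for all $n$ and all $\Theta$ at once. It only sees $H^*(\,\cdot\,;\mathbb{Q})$, so it recovers the $\beta_i$ but nothing about the $(\Z_2)^{T_i}$.
\item The cellular complex gives the full integral homology, but only by explicit computation group by group. This is why the paper's analogous formulas in types $B$, $C$, $D$ are only verified for $n\leqslant 7$. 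Your method applied to $K/(K\cap P_\Theta)$ is a natural candidate for proving those.
\end{itemize}

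Three points should be tightened. Write $y_1,\dots,y_X$ for the squared torus weights of $\prod_j SO(d_j)$ and $\sigma_i$ for their elementary symmetric functions, so $p_i\mapsto\sigma_i$. Let $S=\bigotimes_j\mathbb{Q}[p^{(j)}_1,\dots,p^{(j)}_{\lfloor d_j/2\rfloor}]$.

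\textbf{The all-even case.} Suppose every $d_j$ is even, so $n$ is odd and $X=(n+1)/2$. Then the $\Gamma$-invariant ring is not $S$ but $S\oplus E\,S$, where $E=\prod_j e_j$ and $E^2=\sigma_X$. Also, $H^*(BSO(n+1);\mathbb{Q})$ is generated by $p_1,\dots,p_{X-1},e$, with $p_X=e^2$. So ``the first $X$ Pontryagin classes'' is not the right sequence. What happens is this:
\begin{itemize}
\item $e\mapsto E$ by the Whitney formula.
\item $(p_1,\dots,p_{X-1},e)$ restricts to a homogeneous system of parameters, hence a regular sequence, in the Cohen--Macaulay ring $S[E]/(E^2-\sigma_X)$.
\item The quotient is $S/(\sigma_1,\dots,\sigma_X)$, because $E^2=\sigma_X$.
\end{itemize}
This is the precise content of your word ``consistent''.

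\textbf{The remaining generators.} The classes $p_i$ with $i>X$ restrict to $0$, not merely into the ideal, since $\sigma_i(y_1,\dots,y_X)=0$ for $i>X$. When some block is odd, $e$ also restricts to $0$, because the Euler class of an odd-rank block is rationally trivial. So these classes have zero differential in the Cartan model and split off as exterior generators without any change of generators.

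\textbf{The degenerate case $\Theta=\Sigma$.} Here there is one block of size $n+1$. Your computation gives $X=\lfloor (n+1)/2\rfloor$ and the value $1$, as it should for a point. The statement's convention $X_n(\emptyset)=0$ is not compatible with this case.
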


Now, we present formulas for partial flag manifolds of all other types. Assume that the roots of the Weyl group are labeled as Bourbaki:
\begin{align*}
B_n&: \dynkin[labels={1,,,,n}] B{} &
C_n&: \dynkin[labels={1,,,,n}] C{} &
D_n&: \dynkin[labels={1,,,,n-1,n}] D{} \\
F_4&: \dynkin[label, ordering=Bourbaki] F4 &
E_6&: \dynkin[label, ordering=Bourbaki] E6 &
E_7&: \dynkin[label, ordering=Bourbaki] E7 &
E_8&: \dynkin[label, ordering=Bourbaki] E8
\end{align*}

To visually denote a subset of simple roots $\Theta \subset \Sigma$, we modify the full Dynkin diagram of $\Sigma$. The roots that are {removed} from $\Sigma$ to form $\Theta$ are marked with a cross ($\dynkin A{X}$).
The remaining unmarked nodes represent $\Theta$. If removing nodes splits the diagram into disconnected components, $\Theta$ is identified by the product of the Lie algebras for each component.

For example, let $\Sigma$ be the roots of $B_5$. To represent the subset $\Theta = \{\alpha_2, \alpha_4, \alpha_5\}$, we remove $\alpha_1$ and $\alpha_3$ from the diagram:
$\Theta = \dynkin B{XoXoo}$. This yields two remaining components and defines the set as $\Theta = A_1 \times B_2$.

For any set of simple roots $\Sigma$, we say that the Dynkin diagram of a subset $\Theta \subseteq \Sigma$ is a subdiagram of $A_k$ if it can be obtained by deleting nodes from the Dynkin diagram of $A_k$.

The following Theorems \ref{thm:typeB}, \ref{thm:typeC}, and \ref{thm:typeD} were calculated and verified for $n\leqslant 7$, as available in the GitHub repository and archived on Zenodo \cite{lambert_zenodo}.

\begin{thm}[Type $B_n$]\label{thm:typeB}
The Poincaré polynomial for partial flag manifolds $SO(2n+1, \R)/P_\Theta$ of type $B_n$, for $n\leqslant 7$, is
\begin{equation*}
    PB_n^\Theta(t)= q_B(t) \cdot PA_{n-1}^{\Theta-\{\alpha_n\}}(t) \cdot \prod_{i = \left\lfloor \frac{n+1-k_r}{2}\right\rfloor}^{\left\lfloor \frac{n}{2}\right\rfloor-1} (1+t^{4i+3}),
\end{equation*}
where
\begin{equation*}
    q_B(t)= 
        \begin{cases}
            1, & \mbox{ if $n$ is even}\\
            1+t^{n}, & \mbox{ if $n$ is odd and } X_{n-1}(\mathbf{k})<\frac{n+1}{2}\\
            1+t^{2n+1}, & \mbox{ if $n$ is odd and } X_{n-1}(\mathbf{k})=\frac{n+1}{2}.
        \end{cases}
\end{equation*}
\end{thm}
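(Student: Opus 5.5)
This theorem is a computational statement verified for $n\leqslant 7$, so the proof I would give is a certified computation built on the cellular framework of the paper, plus a structural check explaining the shape of the formula. The plan is as follows. First, for each $n\leqslant 7$ and each $\Theta\subset\Sigma$ of type $B_n$, enumerate the Schubert cells of $\mathbb{F}_\Theta$. These are indexed by minimal-length coset representatives $w\in\weyl/\weyl_\Theta$, and for each one I fix the normal-form reduced decomposition of Subsection \ref{subsec:normalform}. Second, for every covering pair $w'\leq w$ with $\ell(w)=\ell(w')+1$, compute the boundary coefficient through Theorem \ref{thm:split}. Its magnitude is $0$ or $2$, determined by the parity term coming from the rank-one $\mathfrak{sl}(2,\R)$ attaching map. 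Its sign is the degree of the change-of-coordinate map that takes the decomposition induced from $w$ (a letter deleted from the normal form of $w$) to the normal form of $w'$.

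The sign is the step where I expect the real work, and so the main obstacle. I would factor the rewriting from the induced word to the normal form into elementary moves: commutations, short braids $sts=tst$ in the simply-laced part, and the long braid $(st)^2=(ts)^2$ at the double edge $\alpha_{n-1},\alpha_n$. The degree is then the product of the elementary degrees from Propositions \ref{prop:degree1} and \ref{prop:degree}. The delicate case is the long braid. There the sign depends on the structure constants $m_{\alpha,\beta}$ and $n_{\alpha,\beta}$ computed in Section \ref{sec:liebrackets}, and on the fact that in $B_n$ the short root $\alpha_n$ contributes $\mathrm{ad}^2$ terms.

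As a sanity check, I would verify $\partial^2=0$ on every complex. I would also check that the computed complex for $\mathbb{F}_\Theta$ agrees with the fibration $\mathbb{F}\to\mathbb{F}_\Theta$ at the level of Euler characteristic and mod 2 Betti numbers. The mod 2 Betti numbers must equal the number of cells of each dimension, since the boundary coefficients are even. Third, I would compute the Smith normal form of each boundary matrix over $\Z$, read off $\beta_i$, and assemble the resulting Poincaré polynomial.

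Fourth, I would compare this polynomial with the closed formula. I would evaluate $PA_{n-1}^{\Theta-\{\alpha_n\}}(t)$ from He's theorem, whose value comes from Proposition 3.14 of \cite{He2020}, using the $\mathbf{k}$ of $\Theta$ viewed inside $A_{n-1}$. I would then multiply by $q_B(t)$ and by the product over $i$ from $\lfloor (n+1-k_r)/2\rfloor$ to $\lfloor n/2\rfloor-1$, and check equality for all $2^n$ subsets $\Theta$. To make the formula plausible rather than purely empirical, I would also check the extreme cases. For $\Theta=\emptyset$, rational cohomology of $SO(2n+1)/S(O(1)^{2n+1})$ gives the product $\prod_{i}(1+t^{4i+3})$ times the type-$A$ factor. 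For $\Theta=\Sigma$, the polynomial is $1$. Both are consistent with the known rational cohomology of $SO(2n+1)/S(O(k_1)\times\cdots)$-type homogeneous spaces. I would also use the top-degree term $t^{\dim\mathbb{F}_\Theta}$ appearing in the odd case $1+t^{2n+1}$ as a consistency check with orientability.
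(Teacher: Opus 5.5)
Your pipeline is the paper's proof. The paper builds the cells from $\weyl^\Theta$ with normal-form decompositions, takes coefficients from Theorem \ref{thm:split} (restricted via Theorem \ref{teo:mainRS2}) with signs from Propositions \ref{prop:degree1} and \ref{prop:degree}, computes Betti numbers by machine, and compares case by case for $n\leqslant 7$. Several of your supporting claims are wrong, however.

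\textbf{The long braid is not the delicate case.} By \eqref{eq:longbraid1}--\eqref{eq:longbraid2}, its change of coordinates is a permutation of four coordinates with a single $y\mapsto\pi-y$. So it always contributes $-1$ (Proposition \ref{prop:degree1}), independently of $m_{\alpha,\beta}$, $n_{\alpha,\beta}$ and the $\mathrm{ad}^2$ terms. Those constants appear only inside the conjugations used to prove Proposition \ref{prop:ligacaodupla}, and they cancel. What actually makes the computation tractable is Proposition \ref{prop:degree} combined with Theorem \ref{thm:nfprod}(2): inserting a letter of $\mathbf{w}_L$ at position $j$ of the current normal form contributes $(-1)^j$ (Algorithm \ref{alg:degree_calculation}), so no move sequence ever has to be enumerated. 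Also, the sign of $c(w,w')$ is $(-1)^I$ times this degree, not the degree alone.

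\textbf{Your plausibility checks use the wrong spaces.} The split form of $B_n$ is $\mathfrak{so}(n,n+1)$. Its maximal flag is a finite quotient of $SO(n)\times SO(n+1)$, of dimension $n^2$. The formula at $\Theta=\emptyset$ gives exactly the product of the rational Poincaré polynomials of $SO(n)$ and $SO(n+1)$. By contrast, $SO(2n+1)/S(O(1)^{2n+1})$ is the maximal flag of $SL(2n+1,\R)$: it has dimension $n(2n+1)$ and polynomial $\prod_{i=0}^{n-1}(1+t^{4i+3})$, so your proposed check would fail. Likewise, the partial flags are isotropic flag manifolds in $\R^{n,n+1}$, not quotients $SO(2n+1)/S(O(k_1)\times\cdots)$.

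\textbf{Your orientability check exposes a problem with the statement as printed.} The blocks defining $X_{n-1}(\mathbf{k})$ sum to $n$. Hence for odd $n$ one always has $X_{n-1}(\mathbf{k})\leqslant (n-1)/2$, and the case $q_B=1+t^{2n+1}$ never occurs. Now take $n=3$ and $\Theta=\{\alpha_1,\alpha_3\}$. This flag is $7$-dimensional and orientable by \eqref{eq:orientability}, since $S(\alpha_2,\Theta)=-1-1=-2$. So $\beta_7=1$ and its Poincaré polynomial has degree $7$, whereas the printed formula gives $1+t^3$. The condition should presumably read $X_n(\mathbf{k})$, which equals $2$ here. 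Compared literally against the printed statement, your final step would fail for such $\Theta$.
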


\begin{thm}[Type $C_n$]\label{thm:typeC}
The Poincaré polynomial for partial flag manifolds $Sp(2n, \R)/P_\Theta$ of type $C_n$, for $n\leqslant 7$, is
\begin{equation*}
    PC_n^\Theta(t)= PA_{2\left\lfloor \frac{n}{2}\right\rfloor}^{\Theta-\{\alpha_n\}}(t) \cdot \prod_{i = \left\lfloor \frac{n+1-k_r}{2}\right\rfloor}^{\left\lfloor \frac{n+1}{2}\right\rfloor-1} (1+t^{4i+1}).
\end{equation*}
\end{thm}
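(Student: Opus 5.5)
The statement of Theorem \ref{thm:typeC} is asserted for $n\leqslant 7$, so the proof I have in mind is a verified computation rather than a structural argument. It has two components. The first is a correct algorithm for the integral cellular chain complex of $Sp(2n,\R)/P_\Theta$. The second is an exhaustive comparison, over all $\Theta\subset\Sigma$ with $n\leqslant 7$, of the resulting Betti numbers with the closed formula.

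\textbf{The chain complex.} The cells are the Schubert cells indexed by minimal-length coset representatives $w\in\weyl/\weyl_\Theta$, with $\dim = \ell(w)$. By the boundary formula of \cite{RSm19} (Theorem \ref{thm:split}), the coefficient $c(w,w')$ is nonzero only for covering pairs $w'\le w$ with $\ell(w')=\ell(w)-1$. In that case it equals $\pm(1\pm(-1)^{\sigma})$, with the parity determined by the root removed. The global sign is the degree of the change-of-coordinate map between the reduced word for $w'$ induced from the chosen word of $w$ and the fixed (normal form) word of $w'$.

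I will fix normal forms for $\weyl(C_n)$ as the canonical reduced words. Passing from the induced word to the normal form is a sequence of commutations and short and long braid moves. The long braid moves are the $C_2$ relation of length $4$ at the double bond. Propositions \ref{prop:degree1} and \ref{prop:degree} give the degree of each move, so the sign is the product of these local degrees. This defines the integer boundary matrices $\partial_k$, and I will check $\partial_{k-1}\partial_k=0$ as a consistency test.

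\textbf{Homology and the formula.} The second step computes $H_k$ over $\Z$, for instance via Smith normal forms, and reads off $\beta_k=\operatorname{rank}\ker\partial_k-\operatorname{rank}\operatorname{im}\partial_{k+1}$. This runs over all $2^n$ subsets $\Theta$ for each $n\leqslant 7$. The largest case is $C_7$ with $\Theta=\emptyset$, which has $|\weyl|=2^7\cdot 7!=645120$ cells and sparse matrices. The third step evaluates the right-hand side of the theorem for each $\Theta$: the type-$A$ polynomial $PA_{2\lfloor n/2\rfloor}^{\Theta-\{\alpha_n\}}$ from He's formula, multiplied by $\prod(1+t^{4i+1})$ with $k_r$ the largest index in the complement of $\Theta$. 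Then I compare coefficients. As sanity checks I will use Euler characteristic and total dimension: the top degree should be $\dim\mathbb{F}_\Theta$ exactly when the space is orientable. I will also use the fibration $\mathbb{F}\to\mathbb{F}_\Theta$ with fiber $P_\Theta/P$ for rational consistency.

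\textbf{Main obstacle.} The hard part is the sign computation: getting the degree of the long ($C_2$) braid move right, and composing moves consistently along the rewriting path to normal form. That degree depends on the exponential identities of Section \ref{sec:liebrackets} and on the root strings at the double bond, and an error there still yields a chain complex over $\Z_2$ but wrong free ranks. I would first validate the implementation on cases where the answer is known independently. These are type $A$ (He's formula), and rank-$2$ and rank-$3$ cases $C_2$, $C_3$ computed by hand or derived from known homogeneous-space presentations such as the Lagrangian Grassmannian $U(n)/O(n)$. I would only then trust the $n\leqslant 7$ runs.
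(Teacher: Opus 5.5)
Your proposal follows essentially the same route as the paper: it treats the theorem as a finite computational claim, builds the integral cellular chain complex from Theorem~\ref{thm:split} with normal forms as the fixed reduced words, gets the signs from Propositions~\ref{prop:degree1} and~\ref{prop:degree} (Algorithm~\ref{alg:degree_calculation}), and compares the resulting Betti numbers with the formula for all $\Theta$ and all $n\leqslant 7$, which the paper does in SageMath. When you implement it, note that the sign of $c(w,w')$ also includes the factor $(-1)^{I}$, and the parity in $1+(-1)^{\height(\gamma^{\vee})}$ is that of the coroot of $\gamma=u^{-1}(\alpha_I)$ in the dual root system, not that of the deleted simple root; this distinction matters in the non-simply-laced type $C_n$.
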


\begin{thm}[Type $D_n$]\label{thm:typeD}
Suppose that $\alpha_n\not\in \Theta$. In this case, the Poincaré polynomial for partial flag manifolds $SO(2n, \R)/P_\Theta$ of type $D_n$, for $n\leqslant 7$, is
\begin{equation*}
    PD_n^\Theta(t)= PA_{n-1}^{\Theta}(t) \cdot \prod_{i = 0}^{\left\lfloor \frac{n-1}{2}\right\rfloor-1} (1+t^{4i+3}).
\end{equation*}
\end{thm}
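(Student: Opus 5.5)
The theorem is stated as verified for $n\leqslant 7$, so the proof I propose is a finite, computer-assisted one run through the cellular machinery developed later in the paper. A conceptual fibration argument is given as a cross-check. The computational route has four steps.

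\begin{enumerate}
\item For each $n\leqslant 7$ and each $\Theta\subset\Sigma$ with $\alpha_n\notin\Theta$, realise $\mathbb{F}_\Theta$ as a CW complex via the Bruhat decomposition. The cells are indexed by minimal-length coset representatives $w\in\weyl/\weyl_\Theta$, with $\dim = \ell(w)$.
\item Fix for each such $w$ the reduced decomposition given by its normal form (Subsection \ref{subsec:normalform}).
\item For every covering pair $w'\leq w$ with $\ell(w)=\ell(w')+1$, compute the boundary coefficient $c(w,w')$ from Theorem \ref{thm:split}. Its magnitude is $0$ or $\pm 2$, as dictated by the split formula. Its sign is the degree of the change of coordinate map relating the decomposition of $w'$ induced from $w$ (by deleting one letter) to the normal form of $w'$. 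I would obtain that degree by writing the induced word and the normal word as related by a chain of commutations and short braid moves, since $D_n$ is simply laced and no long braids occur. The degree is then multiplied along the chain using Propositions \ref{prop:degree1} and \ref{prop:degree}.
\item Run the algorithm of Subsection \ref{subsec:computational} to get integral matrices, compute their ranks over $\mathbb{Q}$, read off the $\beta_i$, and compare the resulting generating polynomial with $PA_{n-1}^{\Theta}(t)\prod_{i=0}^{\lfloor (n-1)/2\rfloor-1}(1+t^{4i+3})$, where $PA$ is taken from He's theorem.
\end{enumerate}

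As a structural cross-check explaining the shape of the formula, I would use rational coefficients together with the condition $\alpha_n\notin\Theta$. Under this condition $\Theta$ is a subset of the $A_{n-1}$ subdiagram $\{\alpha_1,\dots,\alpha_{n-1}\}$, so $\mathbb{F}_\Theta$ fibres over the maximal isotropic Grassmannian $\mathbb{F}_{\Sigma\setminus\{\alpha_n\}}\cong SO(n)$-type space $SO(n)\cdot$ (one component of $O(n)$). The fibre is the type $A_{n-1}$ flag manifold $SL(n,\R)/P_\Theta$. Rationally, $SO(n)$ has Poincaré polynomial $\prod_{i=0}^{\lfloor (n-1)/2\rfloor-1}(1+t^{4i+3})$, which is exactly the product factor. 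The theorem then amounts to the Leray–Serre spectral sequence degenerating with trivial rational local coefficients. Degeneration could be argued from the cellular side: the Bruhat cells of $\mathbb{F}_\Theta$ factor as products of cells of base and fibre, and on the chosen normal forms the coefficients $\pm 2$ vanish rationally in compatible patterns.

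The main obstacle is the sign computation in step (3). One must reliably reduce each induced word to the normal form and track the degree of each elementary move, which depends on Lie bracket structure constants. I would first validate the implementation on type $A$, where He's formula is known, before trusting it for $D_n$. The case $n=7$ is the heaviest, since $|\weyl(D_7)|=322560$. I expect it to require sparse rational rank computations rather than Smith normal forms, because only Betti numbers are needed.
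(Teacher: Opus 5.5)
Your steps (1)--(4) are the paper's own argument. There the statement is established only by machine computation for $n\leqslant 7$: normal forms serve as the fixed reduced words, Algorithm~\ref{alg:degree_calculation} gives the degrees, Theorem~\ref{thm:split} gives the coefficients, and SageMath gives the Betti numbers. Two small corrections. First, the sign of a nonzero $c(w,w')$ is $(-1)^I\deg(\Phi_{\mathbf{w}'}^{-1}\circ\Phi_{\widehat{\mathbf{w}}_I})$, not the degree alone. Second, by Proposition~\ref{prop:degree1}, the degree of an elementary move does not depend on structure constants: it is $-1$ for a commutation and $+1$ for a short braid.

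The genuine error is in your cross-check. For even $n$, the rational cohomology of $SO(n)$ has an extra exterior generator in degree $n-1$, which transgresses to the Euler class. Hence $P_{SO(n)}(t)=(1+t^{n-1})\prod_{i=0}^{n/2-2}(1+t^{4i+3})$, and the product in the statement equals $P_{SO(n)}$ only for odd $n$.

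Moreover, no spectral-sequence or ``compatible vanishing'' argument is needed, because the fibration is trivial. Take $G=SO_0(n,n)$ acting on $\mathbb{R}^n\oplus\mathbb{R}^n$ with the form $|u|^2-|v|^2$.
\begin{enumerate}
\item Every maximal isotropic subspace is the graph $\{(u,\phi u)\}$ of a unique $\phi\in O(n)$.
\item An isotropic subspace contained in it is the graph of $\phi$ restricted to a subspace of $\mathbb{R}^n$.
\item Since $\alpha_n\notin\Theta$, $\mathbb{F}_\Theta$ is a space of isotropic flags ending in a maximal isotropic subspace of one fixed family.
\end{enumerate}
Therefore $\mathbb{F}_\Theta\cong SO(n)\times\mathbb{F}^{A_{n-1}}_\Theta$, where $\mathbb{F}^{A_{n-1}}_\Theta=SL(n,\mathbb{R})/P_\Theta$. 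Equivalently, with $K=SO(n)\times SO(n)$, the group $K_\Theta$ is the diagonal copy of the type $A_{n-1}$ isotropy group $K^A_\Theta\subset SO(n)$, and $(g,h)K_\Theta\mapsto(gh^{-1},hK^A_\Theta)$ is the diffeomorphism. Künneth then gives $PD_n^\Theta=P_{SO(n)}\cdot PA_{n-1}^\Theta=PA_{n-1}^{\emptyset}\cdot PA_{n-1}^\Theta$ for every $n$. It also yields the integral homology, with no cellular sign bookkeeping.

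For odd $n$ this is a strictly stronger route than the paper's: it proves the displayed formula for all odd $n$, not just $n\leqslant 7$. For even $n$, however, it shows the displayed formula is missing the factor $1+t^{n-1}$.

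Concretely, take $n=4$ and $\Theta=\emptyset$. The maximal flag is $(SO(4)\times SO(4))/M$ with $M$ finite, so its Poincaré polynomial is $(1+t^3)^4$. The display instead gives $PA_3^{\emptyset}(t)\,(1+t^3)=(1+t^3)^3$. That polynomial has $\beta_{12}=0$, which is impossible for this closed orientable $12$-manifold.

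So a faithful run of your step (4) cannot confirm the statement as written for $n=4,6$. Your cross-check only appeared to agree with it because of the misremembered $P_{SO(n)}$.
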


In type $D_n$, if $\alpha_n \in \Theta$ but $\alpha_{n-1} \not\in \Theta$, consider $\Psi$ where we just switch $\alpha_n$ by $\alpha_{n-1}$. We then have the homeomorphism $\mathbb{F}_\Theta \cong \mathbb{F}_\Psi$ and can also apply Theorem \ref{thm:typeD}. Hence, this theorem corresponds to 75\% of all partial flag manifolds of type $D$ where both $\alpha_{n-1}$ and $\alpha_n$ do not belong simultaneously to $\Theta$.

In the analysis of case type D, we observe that there is a ``non-standard behavior'' of the Poincaré polynomial when both roots $s_{n-1}, s_{n}$ belong to $\Theta$. For example, the Poincaré polynomial for the flag manifold given by $D_4$ and $\Theta = \dynkin D{oXoo}$ is $PD_4^\Theta = 3t^7 + 2t^4 + 1$.

\begin{thm}[Type $F_4$]
    The Poincaré polynomial for partial flag manifolds of type $F_4$ is given by Table \ref{tbl:F4}.

\renewcommand{\arraystretch}{1.2}

\begin{table}[ht]
  \centering
  \caption{Poincaré polynomial $PF_4^\Theta$}\label{tbl:F4}
  \begin{tabular}{cc}
    \toprule
    $\Theta$ & $PF_4^\Theta$ \\
    \midrule
    $\emptyset$ & $(t^{11} + 1)(t^7 + 1)(t^3 + 1)^2$\\ \midrule
    $A_1$, $A_2$ & $(t^{11} + 1)(t^7 + 1)(t^3 + 1)$\\ \midrule
    $A_1\times A_1$, $A_2\times A_1$, $B_2$ & $(t^{11} + 1)(t^7 + 1)$ \\ \midrule
    $B_3$ & $t^{11} + 1$ \\ \midrule
    $C_3$ & $t^{15} + 1$ \\
    \bottomrule
  \end{tabular}
\end{table}

\end{thm}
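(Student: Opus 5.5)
The table is not derived from a closed formula: it is the output of the cellular homology computation of Section \ref{sec:homology}, run over all $2^4=16$ subsets $\Theta\subset\Sigma$ of $F_4$. The proof I propose is therefore a verified computation, organised in four steps.

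\textbf{Step 1: cells.} For each $\Theta$, the cellular chain complex has one generator for each minimal-length coset representative $w\in\weyl/\weyl_\Theta$. Each such $w$ is written in the normal form of Subsection \ref{subsec:normalform}, and this fixes its reduced decomposition. The Schubert cell of $w$ has dimension $\ell(w)$. For $\Theta=\emptyset$ there are $|\weyl(F_4)|=1152$ cells, with top dimension $24$.

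\textbf{Step 2: boundary coefficients.} For every covering pair $w'\to w$ with $\ell(w)=\ell(w')+1$, I compute the coefficient $c(w,w')$ from Theorem \ref{thm:split}. This coefficient is a product of two factors. The first is the $0$ or $\pm2$ factor coming from the reflection removed from the reduced word of $w$. The second is the degree of the change-of-coordinate map between the induced word and the normal form of $w'$. To obtain this degree, I factor the passage between the two words into commutations, short braids and long braids. Type $F_4$ has $m_{23}=4$, so long braids genuinely occur. The degree of each move is read off from Propositions \ref{prop:degree1} and \ref{prop:degree}, and the degrees are multiplied along the chain of moves. The structure constants entering the exponential identities of Section \ref{sec:liebrackets} must be taken from a fixed Chevalley basis of $F_4$, where both root lengths occur.

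\textbf{Step 3: homology.} Over $\mathbb{Q}$ the boundary matrices have entries in $\{0,\pm2\}$, so each Betti number is
\[
\beta_i=\#\{\text{$i$-cells}\}-\operatorname{rank}\partial_i-\operatorname{rank}\partial_{i+1}.
\]
Assembling the $\beta_i$ into the Poincaré polynomial and reading off the $16$ cases should reproduce the table. For partial flags I pass to the quotient by $\weyl_\Theta$ through the fibration $\pi_\Theta$, working directly with coset representatives.

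\textbf{Consistency checks.} Several independent checks support the output.
\begin{itemize}
\item Each polynomial should satisfy Poincaré duality when the flag is orientable, have top degree equal to $\dim\mathbb{F}_\Theta$, and take the value $P(1)$ given by rational homology.
\item For $\Theta=\emptyset$, the rational cohomology of $G/P$ is that of $K/M$ with $K=Sp(3)\cdot Sp(1)$ and $M$ finite. Since $K$ and $K/M$ have the same rational cohomology, this gives $H^*(K;\mathbb{Q})$, an exterior algebra on generators of degrees $3,7,11$ (from $Sp(3)$) and $3$ (from $Sp(1)$). Its Poincaré polynomial $(1+t^3)^2(1+t^7)(1+t^{11})$ matches the first row.
\item Analogous $K$-orbit arguments, with $K_\Theta=K\cap P_\Theta$, serve as a cross-check for the other rows.
\item The $C_3$ row, $1+t^{15}$, is a rational sphere of dimension $\dim\mathbb{F}_\Theta=15$.
\item For $B_3$ the flag has dimension $15$ and is non-orientable, which is consistent with the top-degree term of $1+t^{11}$ being below $15$.
\end{itemize}

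\textbf{Main obstacle.} The hard part is Step 2, specifically getting the signs right under the long braid relation in $F_4$. This requires the explicit long-braid degree formula for $m=4$ with mixed root lengths, and consistent Chevalley structure constants. I would validate this step first on the rank-two subsystem $B_2\subset F_4$, checking that the computation reproduces $\partial^2=0$ and the known homology of the $B_2$ flags, before running the full computation.
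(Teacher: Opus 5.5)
This is essentially the paper's own route: normal forms as the fixed reduced words, $c(w,w')$ from Theorem~\ref{thm:split} with the degree computed via Proposition~\ref{prop:degree} and Algorithm~\ref{alg:degree_calculation}, restriction to $\weyl^\Theta$ justified by Theorem~\ref{teo:mainRS2}, and Betti numbers read off the resulting complex. Your rational-cohomology argument for $\Theta=\emptyset$ ($\mathbb{F}=K/M$ with $K=Sp(3)\cdot Sp(1)$, and $M$ finite acting trivially on $H^*(K;\mathbb{Q})$) is a correct independent confirmation of the first row. However, your stated main obstacle is already resolved in the paper: the change-of-coordinate maps \eqref{eq:shortbraid}--\eqref{eq:longbraid2} contain no structure constants $m_{\alpha,\beta}$, so every long braid contributes $-1$ regardless of which root is short or how the Chevalley basis is normalized, and the sign computation is purely combinatorial on words. (Also, your $P(1)$ check is vacuous, whereas $P(-1)=\sum_{w\in\weyl^\Theta}(-1)^{\ell(w)}$ is a genuine one.)
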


\begin{thm}[Type $E_6$]   
The Poincaré polynomial for partial flag manifolds of type $E_6$ is given by Table \ref{tbl:E6}.

\begin{table}[ht]
  \centering
  \caption{Poincaré polynomial $PE_6^\Theta$}\label{tbl:E6}
  \begin{tabular}{cc}
    \toprule
    $\Theta$ & $PE_6^\Theta$ \\
    \midrule
    $D_4$  &  $(t^{8} + 1)(t^{16} + t^{8} + 1)=\binom{3}{1,1,1}_{t^8}$\\ \midrule
    $D_5$ & $t^{16} + t^{8} + 1=\binom{3}{1,2}_{t^8}$ \\
    \midrule
    subdiagram of $A_8$ & $PA_8^{\Theta}(t)$\\
    \bottomrule
  \end{tabular}
\end{table}
\end{thm}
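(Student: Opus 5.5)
The plan is to compute the cellular homology of each $\mathbb{F}_\Theta$ for $E_6$ directly from the Bruhat cell decomposition, and then read off the Betti numbers. By the usual parametrization, the Schubert cells of $\mathbb{F}_\Theta$ are indexed by the minimal-length coset representatives $w \in \weyl/\weyl_\Theta$, and the cell indexed by $w$ has dimension $\ell(w)$.

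For every such $w$ we fix its normal-form reduced decomposition. For each covering pair $w' < w$ with $\ell(w)=\ell(w')+1$, the boundary coefficient is then given by Theorem~\ref{thm:split}. Its value is $0$ or $\pm 2$, with the sign corrected by the degree of the change of coordinate map. That map relates the reduced word induced from $w$ (delete one letter) to the normal form fixed for $w'$.

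Because $E_6$ is simply laced, only commutation moves and short braid moves occur. Their contributions to the degree are given by Propositions~\ref{prop:degree1} and~\ref{prop:degree}, using the Lie bracket and exponential identities of Section~\ref{sec:liebrackets}. The moves needed to pass from the induced word to the normal form are produced algorithmically, as in Subsection~\ref{subsec:computational}.

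With the boundary matrices $\partial_k$ assembled over $\Z$, the Betti numbers are $\beta_k = \dim_{\mathbb{Q}} \ker \partial_k - \operatorname{rank}\partial_{k+1}$. We run this computation for each of the $2^6$ subsets $\Theta$, reduced modulo the diagram automorphism of $E_6$, which yields homeomorphic flag manifolds.

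The resulting polynomials are then sorted into the three rows of Table~\ref{tbl:E6}.

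\emph{The $D_4$ and $D_5$ rows.} We check directly that the computed polynomials equal the stated $q$-multinomials in $t^8$.

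\emph{The subdiagram row.} When $\Theta$ is a subdiagram of $A_8$, we compute $PA_8^{\Theta}$ from He's formula and compare. To make this meaningful, $\Theta$ must first be identified with a subset of the $A_8$ diagram. The natural choice is the $A_8 \subset E_8$-type embedding, in which the long branch of $E_6$ together with the extra node is read as an $A_8$ chain. In practice we fix an explicit embedding of the components of $\Theta$ as subdiagrams of $A_8$, so that $\mathbf{k}$ is determined, and we check that the formula does not depend on the choice. The last point should hold, since $PA_n^\Theta$ depends only on the multiset of gaps $k_{i}-k_{i-1}$ together with $n$.

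We also need to confirm that every $\Theta \subset E_6$ not of type $D_4$ or $D_5$ (and not $E_6$ itself, which gives a point) has a diagram that is a disjoint union of type $A$ pieces fitting in $A_8$. This is a finite check: the possible $\Theta$ are $A$-type products and the $D$-type ones.

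\emph{Main obstacle.} The hard part is the correctness and feasibility of the sign computation. $|W(E_6)| = 51840$, and the maximal flag has dimension $36$, so there are many covering pairs. For each pair, the rewriting to normal form must be tracked through a sequence of moves, with the degree of each move computed consistently.

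I would first validate the implementation on small subcases against known results: type $A$ (He's formula), $D_4$, and the rational cohomology of fibrations. Only then would I run $E_6$. As a cross-check, I would verify that $P(-1)$ equals the Euler characteristic obtained from the cell counts, i.e.\ the signed count of cells $\sum_w (-1)^{\ell(w)}$. I would also check the consistency $\partial^2=0$ over $\Z$.
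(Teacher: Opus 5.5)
Your plan is essentially the paper's own. The table comes from computing the cellular complex over $\weyl^\Theta$ with normal-form reduced words: boundary coefficients come from Theorem~\ref{thm:split}, restricted to $\weyl^\Theta$ by Theorem~\ref{teo:mainRS2}, with signs from Proposition~\ref{prop:degree} via Algorithm~\ref{alg:degree_calculation}; Betti numbers are then read off for every $\Theta$ (your remark about an ``$A_8\subset E_8$'' embedding is muddled, since five $E_6$ nodes plus one do not make an eight-node chain, but it is harmless: ``subdiagram of $A_8$'' only means the type of $\Theta$ embeds in the $A_8$ chain, and the gap multiset --- component sizes $m_j+1$ padded with $1$'s --- is determined by that type, so $PA_8^\Theta$ is embedding-independent as you argue).
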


\begin{thm}[Type $E_7$]
    The Poincaré polynomial for partial flag manifolds of type $E_7$ can be divided into two cases:

    \begin{itemize}
        \item $\Theta$ contains the three roots $\{\alpha_2,\alpha_5,\alpha_7\}$: There are 15 partial flag manifolds. Their Poincaré polynomials are given in Table \ref{tbl:E7_1}. Denote $h(t) = (t^{13}+ 1)(t^{27} + t^{25} + t^{23} + t^{21} + t^{19} + t^{17} + t^{16} + t^{12} + 2 t^8 + t^4 + 1)$.

        \begin{table}[ht]
  \centering
  \caption{Poincaré polynomial $PE_7^\Theta$ where $\Theta$ contains the three roots $\{\alpha_2,\alpha_5,\alpha_7\}$}\label{tbl:E7_1}
  \begin{tabular}{cc}
    \toprule
    $\Theta$ & $PE_7^\Theta$ \\ \midrule
     \dynkin E{ooXoooo} & $(t^{15} + 1)(t^{13} + 1)(t^9 + 1)\binom{3}{1,2}_{t^4}$\\ \midrule
    \dynkin E{oooXooo}, \dynkin E{ooXooXo}  & \multirow{2}{*}{${(t^{15} + 1)(t^{13} + 1)(t^9 + 1)\binom{3}{1,2}_{t^4}}\binom{3}{1,2}_{t^4}$}\\ 
    \dynkin E{ooXXooo}, \dynkin E{XooXooo} & \\  \midrule
\dynkin E{oooXoXo}, \dynkin E{ooXXoXo}  & \multirow{2}{*}{${(t^{15}+1)(t^{13} + 1)(t^9 + 1)\binom{3}{1,2}_{t^{4}}}\binom{3}{1,1,1}_{t^{4}}$}\\ 
   \dynkin E{XooXoXo}  & \\  \midrule     
     \dynkin E{XoXoooo} & $(t^{15}+ 1)(t^{13}+ 1)(t^{11}+ 1)(t^{9}+ 1)$\\ \midrule
     \dynkin E{XoXooXo}, \dynkin E{XoXXooo} & ${(t^{15}+1)(t^{13}+ 1)(t^{11}+1)(t^{9}+ 1)}\binom{3}{1,2}_{t^{4}}$\\ \midrule
     \dynkin E{XoXXoXo} & ${(t^{15}+1)(t^{13}+ 1)(t^{11}+1)(t^{9}+ 1)}\binom{3}{1,1,1}_{t^{4}}$\\ \midrule
     \dynkin E{Xoooooo} & ${t^{29} + t^{27} + t^{25} + t^{21} + t^{16} + t^{12} + t^8 + 1}$\\ \midrule
     \dynkin E{oooooXo} & $h(t)$ \\ \midrule
\dynkin E{XooooXo} & $h(t)\binom{2}{1,1}_{t^{8}}$\\ 
    \bottomrule
  \end{tabular}
\end{table}

        \item $\Theta$ does not contain the three roots $\{\alpha_2,\alpha_5,\alpha_7\}$ simultaneously: The Poincaré polynomial is given in Table \ref{tbl:E7_2}.

\begin{table}[ht]
  \centering
  \caption{Poincaré polynomial $PE_7^\Theta$ where $\Theta$ does not contain the three roots $\{\alpha_2,\alpha_5,\alpha_7\}$ simultaneously} \label{tbl:E7_2}
  \begin{tabular}{cc}
    \toprule
    $\Theta$ & $PE_7^\Theta$ \\
    \midrule
    $D_4$  &  $(t^{13}+1)(t^9+1)(t^5 +1)\binom{3}{1,1,1}_{t^8}$\\ \midrule
    $D_5$ & $(t^{13}+1)(t^9+1)(t^5 +1)\binom{3}{1,2}_{t^8}$ \\
    \midrule
    $E_6$ & $(t^{13}+1)(t^9+1)(t^5 +1)$ \\
    \midrule
    subdiagram of $A_9$ & $(t^{13}+1)(t^5 +1) \cdot PA_{9}^{\Theta}(t)$\\
    \bottomrule
  \end{tabular}
\end{table}
\end{itemize}
\end{thm}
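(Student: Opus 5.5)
The plan is to compute all the Betti numbers with the cellular chain complex of the Bruhat decomposition of $\mathbb{F}_\Theta = G/P_\Theta$ for $\mathfrak{g}$ split of type $E_7$. The cells are the Schubert cells $N\cdot wb_\Theta$. They are indexed by minimal-length coset representatives $w \in \weyl^\Theta$ of $\weyl/\weyl_\Theta$, and the cell of $w$ has dimension $\ell(w)$.

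For each covering pair $w' < w$ in the Bruhat order with $\ell(w)=\ell(w')+1$, the boundary coefficient $c(w,w')$ is given by the formula of \cite{RSm19} (Theorem \ref{thm:split}). It is either $0$ or $\pm 2$, times the degree of the change-of-coordinate map relating the reduced word of $w'$ induced from $w$ to the fixed reduced word of $w'$.

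First, I fix the normal form of each $w\in\weyl^\Theta$ as its canonical reduced decomposition. Deleting one letter from the normal form of $w$ gives a reduced word for $w'$. I then connect that word to the normal form of $w'$ by a sequence of commutations and braid moves. Since $E_7$ is simply laced, only commutations and short braids occur. The degree of each move is read off from Propositions \ref{prop:degree1} and \ref{prop:degree}, and the total sign is their product. This gives the integer matrices of $\partial_k$ for every $k \le \dim \mathbb{F}_\Theta$; the maximum is $63$, attained for $\Theta=\emptyset$.

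Next, I split into the two cases of the statement. Since $\Theta$ ranges over the $2^7$ subsets of $\Sigma$, the verification is finite, and I run the algorithm of Subsection \ref{subsec:computational} on each. For each $\Theta$, I compute the ranks of the $\partial_k$ over $\mathbb{Q}$, or equivalently the Smith normal form, which also yields the $2$-torsion. Then $\beta_k = \dim\ker\partial_k - \operatorname{rank}\partial_{k+1}$, and I compare the resulting polynomial with the tabulated one.

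For the second case, the answer has a structural shape: $(t^{13}+1)(t^5+1)\cdot PA_9^\Theta(t)$ for subdiagrams of $A_9$, and the $D_4$, $D_5$, $E_6$ rows. Two checks support it. One is the Euler characteristic at $t=-1$. The other is the rational fibration $\pi_\Theta$: over $\mathbb{Q}$ only ``even'' pieces survive, so the factors $(1+t^{4i+1})$ and $(1+t^{4i+3})$ are consistent with rational cohomology generated in the degrees of $K$-invariants, as in \cite{He2020}. These checks are sanity checks only; the actual proof is the exhaustive computation.

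The main obstacle is the size of the computation. For $\Theta=\emptyset$ there are $|\weyl(E_7)| = 2903040$ cells, and the rank computations for the middle-dimensional boundary matrices are very large. I would handle this in three ways:
\begin{itemize}
\item Exploit sparsity. Each $w$ has at most $\ell(w)$ Bruhat covers, and the coefficients are $0,\pm2$, so I can work with $\partial/2$ over $\mathbb{Z}$ and compute ranks mod a large prime $p$ as well as over $\mathbb{Q}$. This gives the Betti numbers; the torsion is not needed for the Poincaré polynomial.
\item Cache the move-sequence degrees along the normal forms.
\item Reduce the number of independent cases: the diagram of $E_7$ has no nontrivial automorphism, but the fibration $\pi_\Theta$ together with the repeated polynomials (for example $h(t)$ and the $\binom{3}{1,2}_{t^4}$ factors) can be used to cross-check entries of Table \ref{tbl:E7_1}.
\end{itemize}
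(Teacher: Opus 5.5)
Your proposal follows the paper's route: cellular homology of the Bruhat decomposition with coefficients from Theorem \ref{thm:split} (restricted to $\weyl^\Theta$ via Theorem \ref{teo:mainRS2}), normal forms as the fixed reduced words, degrees from Propositions \ref{prop:degree1} and \ref{prop:degree} as in Algorithm \ref{alg:degree_calculation}, and an exhaustive machine computation of the Betti numbers for every $\Theta$. The only differences are implementation choices, which do not affect correctness: the paper computes the maximal-flag coefficients once and restricts them to each $\weyl^\Theta$ (membership tested via precomputed $\Sigma\setminus D_R(w)$), and it parallelizes over Weyl group elements, whereas you use sparsity, caching and cross-checks.
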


\section{Cellular decomposition of flag manifolds}\label{sec:preliminaries}

To justify the homology computations underlying the Poincaré polynomials presented in Section \ref{sec:formulas}, we must review the topological structure of real flag manifolds. This structure is provided by the Bruhat decomposition, which partitions the manifold into Bruhat cells. The closures of these cells, the Schubert cells, naturally endow the flag manifold with a cellular complex structure.

Before proceeding, we fix some standard notation. Let $\N=\{1,2,3, \dots\}$ and $\Z$ be the set of integers. For $n,m\in\Z$, where $n\leqslant m$, denote the set $[n,m]=\{n,n+1, \dots, m\}$. For $n\in \N$, denote $[n]=[1,n]$.

\subsection{Weyl Group}

The Weyl group $\weyl$ of the root system $\Pi$ is the group generated by the reflections $s_{\alpha}$, $\alpha \in \Pi$. For the subset $\Theta \subset \Sigma$, we define the subgroup $\mathcal{W}_\Theta$ generated by the reflections with respect to the roots $\alpha \in \Theta$. 
We denote by $\mathcal{W}^{\Theta}$ the subset of minimal representatives of the cosets of $\mathcal{W}_{\Theta}$ in $\mathcal{W}$.

If we consider the elements of $\mathcal{W}$ as a product of simple reflections $s_i=s_{\alpha_i}$, $\alpha_i \in \Sigma$, the length $\ell(w)$ of $w \in \mathcal{W}$ is defined as the number of simple reflections in any reduced decomposition of $w$. 

The Bruhat-Chevalley order (or simply strong Bruhat order) is defined as it follows: we say that $w_{1}\leqslant w_{2}$ if given a reduced decomposition $w_2 = s_{j_1} \cdots s_{j_r}$ then $w_{1}=s_{j_{i_{1}}}\cdots s_{j_{i_{k}}}$ for some $1\leqslant i_1\leqslant \cdots \leqslant i_k\leqslant r$. 

When there exists $w,w' \in \weyl$ such that $w'\leqslant w$ and $\ell(w) = \ell(w')+1$, we say that $w$ covers $w'$ (alternatively, $w,w'$ is a covering pair). If $w$ covers $w'$ and $w=s_{i_1}\cdots s_{i_\ell}$ is a reduced decomposition, then we will denote by $I$ the integer in $[\ell]$ such that $w' = s_{i_1} \cdots \widehat{s_{i_I}}\cdots s_{i_\ell}$, where the integer $I$ depends on $w'$ and the choice of the reduced decomposition of $w$. 

Let $M_{\ast}$ and $M$ denote, respectively, the normalizer and the centralizer of $\mathfrak{a}$ in $K$. Then the Weyl group $\mathcal{W}$  can equivalently be identified with the quotient $M_{\ast}/M$. Denote by $\mathfrak{m}$ the Lie algebra of $M$.

\subsection{Cellular decomposition}

The Bruhat decomposition presents flag manifolds as a union of $N$-orbits, namely,
\begin{equation*}
\mathbb{F}_{\Theta }=\coprod_{w\in \mathcal{W}^{\Theta }} N\cdot wb_{\Theta },
\end{equation*}
where $b_{\Theta} = 1\cdot P_{\Theta}$ is the base point. Each orbit $N\cdot wb_{\Theta}$, for $w \in \mathcal{W}$, is called a Bruhat cell. It is diffeomorphic to a Euclidean space and, in the case of a split real form, its dimension coincides with the length of $w$, i.e., $\dim \left( N\cdot wb_{\Theta }\right) = \ell(w)$ (for details, see  Wiggerman \cite{Wig98}, Cor. 2.6). A Schubert cell $\mathcal{S}_{w}^{\Theta}$ is the closure of a Bruhat cell. The Bruhat-Chevalley order also characterizes a partial order between the corresponding Schubert cells. It equips the flag manifolds with a cellular structure where $\mathcal{S}_{w}^{\Theta}= {\bigcup N\cdot ub_{\Theta}}$, with the union over $u\in\weyl^\Theta$ such that $u\leqslant w$.

\subsection{Gluing cells}

In order to understand how the Schubert cells glue to each other, we start with the maximal flag manifold.
For this part, we refer to \cite{RSm19}.
A Schubert cell $\mathcal{S}_{w}$ is obtained from smaller cells $\mathcal{S}_{v}$, for $v<w$, by gluing a cell of dimension $\dim (N\cdot wb_{0})$. Once this process is performed for each $w\in \mathcal{W}$, we get a cellular decomposition for $\mathbb{F}$ that is explicitly given by characteristic maps. In order to define a characteristic map for $\mathcal{S}_{w}$, $w\in 
\mathcal{W}$, we must choose a reduced expression 
\begin{equation*}
w=s_{1}\cdots s_{n}
\end{equation*}
as a product of simple reflections $s_{i}=s_{\alpha _{i}}$.

As before, $\mathbb{F}=G/P$ is the maximal flag manifold. We denote by $\mathbb{F}_{i}=G/P_{i}$ the partial flag manifolds where $P_{i}=P_{\{\alpha_{i}\}}$, for $\alpha_{i}$ a simple root. The canonical fibration is $\pi
_{i}:\mathbb{F}\rightarrow \mathbb{F}_{i}$. Denote $K_{i}=K\cap P_{i}$. 
\begin{prop}[\cite{RSm19}, Prop. 1.3]
    The Schubert cell for a reduced decomposition $w=s_{1}\cdots s_{n}$ is given by
    \begin{equation*}
    \mathcal{S}_{w} = K_{1} \cdots K_{n} \cdot b_{0}.
\end{equation*}

\end{prop}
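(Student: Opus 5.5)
We argue by induction on the length $n$ of the reduced decomposition $w=s_1\cdots s_n$, and show that $K_1\cdots K_n\cdot b_0$ equals the closure of the Bruhat cell $N\cdot wb_0$. Two standard inputs will be used. First, $P_i = K_i P$, which follows from the Iwasawa decomposition of the rank-one parabolic $P_i$; note that $K_i$ contains $M$ and a representative of $s_i$. Second, the rank-one Bruhat decomposition $P_i = P \cup P s_i P$, hence $P_i\cdot b_0 = K_i\cdot b_0$ is a circle containing $b_0$ and $s_i b_0$. From these we deduce the multiplication rule for Bruhat cells: since $\ell(w s_i)=\ell(w)+1$ in the reduced case, $(P w P)(P s_i P) = P w s_i P$, and therefore $\overline{PwP}\,P_i = \overline{P w s_i P}$ as closed sets.

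Set $w_k = s_1\cdots s_k$ and $X_k = K_1\cdots K_k\cdot b_0$. Since $K$ is compact, each $X_k$ is compact, and hence closed. For $k=1$ we have $X_1 = K_1 b_0 = P_1 b_0 = P b_0 \cup P s_1 b_0$. Because $P=MAN$ and $MA$ fixes $b_0$ while normalizing $N$, this equals $\{b_0\}\cup N s_1 b_0 = \overline{N s_1 b_0}$, which is $\mathcal{S}_{s_1}$. For the inductive step, assume $X_{k-1} = \mathcal S_{w_{k-1}} = \overline{P w_{k-1} P}/P$. Using $K_k b_0 = P_k b_0$, we get
\begin{equation*}
X_k = X_{k-1}\text{-lift}\cdot K_k b_0 = K_1\cdots K_{k-1}P_k b_0 .
\end{equation*}
Since $K_1\cdots K_{k-1}P = \overline{P w_{k-1}P}$ (the preimage of $X_{k-1}$ in $G$), this gives $X_k = \overline{Pw_{k-1}P}\,P_k/P$. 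By the multiplication rule and reducedness, this is $\overline{P w_k P}/P = \overline{N w_k b_0}=\mathcal S_{w_k}$. Here we use $P w P\, b_0 = N w b_0$, which follows from $P = NMA$ and the fact that $MA$ is normalized by $w$ and stabilizes $b_0$.

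The main obstacle is justifying $\overline{PwP}\,P_i = \overline{Pws_iP}$ in the real, non-algebraic setting. For the inclusion "$\subseteq$", we write $\overline{PwP}=\bigcup_{u\le w} PuP$ and use the Tits relations $PuP\cdot Ps_iP \subseteq PuP\cup Pus_iP$, together with the fact that $u\le w$ implies $u s_i \le w s_i$ or $u s_i\le w$ (the lifting property). For the inclusion "$\supseteq$", we observe that $PwP\cdot P_i$ is a closed set: it is the product of the compact set $K_1\cdots K_{k}$ with $P$. This closed set contains the open dense cell $Pws_iP$ of the union, so it contains the closure. Since the left-hand side is closed by compactness, the equality $X_k=\mathcal S_{w_k}$ follows, completing the induction.
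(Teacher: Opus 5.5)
The paper gives no proof of this statement; it cites \cite{RSm19}. Your induction is the standard route: in effect you show $\overline{Pw_kP}=P_1\cdots P_k=K_1\cdots K_kP$, using $P_i=K_iP$ (immediate from $G=KP$ and $P\subseteq P_i$) and the Tits multiplication rule. It is correct in substance, but one sentence is false as written and must be repaired.

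In the ``$\supseteq$'' step you claim that $PwP\cdot P_i$ is closed. It is not, because $PwP\cdot P_i=PwP\cup Pws_iP$. Already for $w=s_1$, $s_i=s_2$ in type $A_2$ this set is $Ps_1P\cup Ps_1s_2P$, whose closure also contains $P$ and $Ps_2P$.

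The set you need, and the one your justification (``product of the compact set $K_1\cdots K_k$ with $P$'') actually describes, is $\overline{Pw_{k-1}P}\cdot P_k$. By the induction hypothesis it equals $K_1\cdots K_{k-1}P\cdot K_kP=K_1\cdots K_kP$. This is the preimage of the compact set $X_k$, hence closed. It contains $Pw_{k-1}P\cdot Ps_kP=Pw_kP$, hence it contains $\overline{Pw_kP}$. Closedness of $\overline{PwP}\cdot P_i$ also holds for arbitrary $w$: it is the preimage in $G$ of the compact image of $\overline{PwP}/P$ in $G/P_i$.

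For ``$\subseteq$'' you rely on $\overline{PwP}=\bigcup_{u\le w}PuP$ and the lifting property. That is admissible here, since the paper lists the closure description as background. However, in many sources that description is itself derived from the product formula you are proving, so it is cleaner to bypass it. In rank one, $P_k=\overline{Ps_kP}$, since the Bruhat cell in $P_k/P$ is the sphere minus $b_0$. Continuity of multiplication then gives $\overline{Pw_{k-1}P}\cdot\overline{Ps_kP}\subseteq\overline{Pw_{k-1}P\cdot Ps_kP}=\overline{Pw_kP}$.

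With this change, the induction uses only $G=KP$, the rank-one Bruhat decomposition, the multiplication rule when $\ell(w s_k)=\ell(w)+1$, and compactness of $K$. It is then self-contained and free of any risk of circularity.
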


The strategy proceeds by parameterizing each $K_i$ that appears in the preceding product.

\begin{lem}[\cite{RSm19}, Lem. 1.6]\label{parametrization}
Let $G=G(\alpha )$ be a real rank one Lie group with maximal compact subgroup $K=K_{\alpha}$ and the corresponding flag
manifold $\mathbb{F}=S^{m}$ with origin $b_{0}$. Let $B^{m}$ be the closed
ball in $\mathbb{R}^{m}$. Then, there exists a continuous map $\psi
:B^{m}\rightarrow K$ such that

\begin{itemize}
\item $\psi (S^{m-1}) \subset M$ and hence $\psi(S^{m-1}) \cdot b_0 = b_0$.

\item If $x\in B^{m}\setminus S^{m-1}$, then $\psi (x)\cdot wb_{0}$ is a
diffeomorphism in the Bruhat cell that is the complement of $b_0$.
\end{itemize}
\end{lem}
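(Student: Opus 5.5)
The plan is to reduce to an explicit computation in $\mathfrak{sl}(2,\R)$ via the Cayley transform, using the fact that in the split case the rank one group $G(\alpha)$ has Lie algebra $\mathfrak{g}(\alpha)\cong\mathfrak{sl}(2,\R)$. Then $K_\alpha$ is a circle (up to a finite central part), $m=d_\alpha=1$, $\mathbb{F}=S^1$, and $B^1=[-1,1]$ with boundary $S^0=\{\pm1\}$. For general real rank one groups (arbitrary multiplicity $m$) the same idea works with $K_\alpha$ acting transitively on $S^m$ and $M$ the stabilizer of $b_0$. So the core of the argument is to build a path, or more generally a ball, in $K$ that sweeps out the sphere exactly once.

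The construction proceeds as follows.

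\textbf{Step 1.} Pick $X\in\mathfrak{g}_\alpha$ with $\theta X\in\mathfrak{g}_{-\alpha}$, and set $Y=X+\theta X\in\mathfrak{k}$. More generally, let $\mathfrak{k}_\alpha=\{Z+\theta Z: Z\in\mathfrak{g}_\alpha\}$, which is $m$-dimensional. Normalize so that $\exp(\pi Y)\in M$: in $SL(2,\R)$ with $Y=\begin{pmatrix}0&1\\-1&0\end{pmatrix}$ one has $\exp(\pi Y)=-I\in M$.

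\textbf{Step 2.} Define $\psi(x)=\exp(\pi\, x\cdot Y)$, where for $x\in B^m$ the product $x\cdot Y$ means the corresponding vector of $\mathfrak{k}_\alpha$ with $|x|=1$ mapped to a unit-normalized element. On $S^{m-1}$ the vector $\pi xY$ has norm $\pi$, and $\exp$ of such elements lies in $M$. In the split case this is just $\exp(\pm\pi Y)=\pm I$. In general it follows because $\ad(Y)$ has eigenvalues $0,\pm i,\pm 2i$ on $\mathfrak{g}(\alpha)$ (the root $2\alpha$ may occur), so $\Ad(\exp \pi Y)$ acts trivially on $\mathfrak{a}$. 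Hence $\exp(\pi Y)$ centralizes $\mathfrak{a}$, giving the first bullet.

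\textbf{Step 3.} For the interior, identify the Bruhat cell $N\cdot w b_0$ (here $w$ is the nontrivial Weyl element, and the cell is the complement of $b_0$) with $\mathfrak{g}_\alpha\cong\R^m$ via $Z\mapsto \exp(Z)wb_0$. Then compute $\exp(tY)\cdot wb_0$ explicitly. In $SL(2,\R)$ acting on $\R P^1$, rotation by angle $t\in(-\pi/2,\pi/2)$ applied to the line $w b_0$ yields the point $\exp(\tan(t)\,X)wb_0$, up to sign conventions. Thus the interior map is $x\mapsto \tan(\pi|x|/2)\,x/|x|$, which is a diffeomorphism from the open ball onto $\R^m$. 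Note that the scaling $\pi$ versus $\pi/2$ must be chosen consistently with Step 2: the right normalization is the one where $\psi$ traverses a half-rotation of $\R P^1$. For general $m$, use the $K$-equivariant realization $S^m=K_\alpha/M$ and the polar (Cartan) decomposition $K_\alpha=M\exp(\mathfrak{k}_\alpha)M$ to get the same radial formula.

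The main obstacle is pinning down the normalization, namely the exact scalar making $\psi(S^{m-1})\subset M$ while the interior is injective. The dangerous case is a nonreduced root system ($2\alpha$ a root), where the eigenvalues of $\ad Y$ mix. I would handle this first by a direct $SL(2,\R)$ matrix check, and then by the standard $\mathfrak{su}(n,1)$/$\mathfrak{sp}(n,1)$ computation. Injectivity and smoothness on the interior then follow from the explicit $\tan$ formula, and the inverse is smooth since $\tan$ is a diffeomorphism on $(-\pi/2,\pi/2)$.
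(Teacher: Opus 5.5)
\textbf{There is a genuine gap.} It sits where you locate the ``normalization'' problem, but it cannot be fixed by choosing a scalar. Your $\psi(x)=\exp(\pi\,x\cdot Y)$ is centered at the identity, $\psi(0)=1$. Take the split case $m=1$, $B^1=[-1,1]$. There $\psi(\pm1)=\exp(\pm\pi Y)\in M$ is fine, but the interior fails under either reading.

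If the interior map is $x\mapsto\psi(x)b_0$, the center of the ball goes to $b_0$ itself. Moreover, each of $(-1,0)$ and $(0,1)$ covers $S^1\setminus\{b_0\}$ once. This is the reading consistent with the first bullet and with $\Phi_w=\psi_1(t_1)\cdots\psi_n(t_n)\cdot b_0$.

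If instead the map is $x\mapsto\psi(x)wb_0$, as in your Step~3, then $x=\pm\frac12$ go to $\exp(\pm\frac{\pi}{2}Y)wb_0=b_0$, and the map is again two-to-one.

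Your tangent formula is in fact computed for $\exp(\frac{\pi}{2}x\,Y)$. For that choice, $\psi(\pm1)=\exp(\pm\frac{\pi}{2}Y)$ represents the nontrivial Weyl element, which lies in $M_*$ but not in $M$. So with the ball centered at $1$, no constant $c$ in $\exp(c\,x\cdot Y)$ satisfies both bullets.

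Under the literal reading $\psi(x)\cdot wb_0$, no $\psi$ whatsoever works. The condition $\psi(S^{m-1})\subset M$ would send the boundary to $wb_0$. The compact image of $B^m$ would then have to be all of $S^m$ while missing $b_0$. So the intended map is $x\mapsto\psi(x)b_0$.

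The missing idea is to center the ball at a representative of $w$ rather than at $1$. Put $\psi(x)=\exp(\frac{\pi}{2}x\,Y)\exp(\frac{\pi}{2}Y)=\exp(\frac{\pi}{2}(1+x)Y)$. Then $\psi(-1)=1$ and $\psi(1)=\exp(\pi Y)\in M$. Your own computation gives $\psi(x)b_0=\exp(\tan(\frac{\pi x}{2})X)\,wb_0$ for $|x|<1$, which is a diffeomorphism onto $N\cdot wb_0$.

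With $t=\frac{\pi}{2}(1+x)$, this is exactly the realization the paper records after citing \cite{RSm19} (Lemma~\ref{compact_parametrization_1}). There $\psi(t)=\exp(tA_\alpha)$ on $[0,\pi]$, the boundary $\{0,\pi\}$ goes to $\{1,\exp(\pi A_\alpha)\}\subset M$, and the midpoint goes to $s_\alpha$.

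For general $m$ you must recenter in the same way. Your sketch also has further problems in the nonreduced case. There $\mathfrak{k}_\alpha$ and $\mathfrak{g}_\alpha$ have dimension $m_\alpha<m$. Also, unit vectors in $\mathfrak{k}_\alpha$ and $\mathfrak{k}_{2\alpha}$ return to $M$ at different times, so a single radial normalization does not send all of $S^{m-1}$ into $M$.

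A cleaner general argument is a lifting argument. Take a collapse map $B^m\to S^m$ that is a diffeomorphism of the open ball onto $S^m\setminus\{b_0\}$ and sends the boundary to $b_0$. Lift it through the bundle $K\to K/M=S^m$, which is possible because $B^m$ is contractible. Any such lift automatically sends $S^{m-1}$ into the fiber over $b_0$, namely $M$.
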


Let $\mathbb{F}$ be a real flag manifold of a Lie algebra $\mathfrak{g}$ that is a split real form. For each root $\alpha\in\Pi$, we have $\g(\alpha) \cong \mathfrak{sl}(2,\R)$, and the compact Lie algebra of $K_{\alpha}$ is $\mathfrak{so}(2)$. Given a Cartan involution $\theta$, let $X_\alpha\in\g_\alpha$ and $X_{-\alpha} = -\theta(X_\alpha)\in\g_{-\alpha}$ be such that $\left\langle X_\alpha,X_{-\alpha}\right\rangle = \frac{2}{\langle\alpha,\alpha\rangle}$. Then $[X_\alpha,X_{-\alpha}]=H_\alpha^\vee = \frac{2 H_\alpha}{\langle\alpha,\alpha\rangle}$. Denote $A_\alpha = X_\alpha - X_{-\alpha}$. Explicitly, we can write $\rho: \mathfrak{sl}(2,\R)\to \g(\alpha)$ with $\rho(H)=H_\alpha^\vee$, $\rho(X)=X_\alpha$, and $\rho(Y)=X_{-\alpha}$ as follows:
\begin{align*}
    H &= \begin{pmatrix}
        1 & 0\\
        0 & -1
    \end{pmatrix} ,
    & X &= \begin{pmatrix}
        0 & 1\\
        0 & 0
    \end{pmatrix},
    & Y &= \begin{pmatrix}
        0 & 0\\
        1 & 0
    \end{pmatrix}.
\end{align*}

\begin{lem}[\cite{RSm19}, Lem. 1.7]
\label{compact_parametrization_1} The one-dimensional version of Lemma \ref{parametrization} is realized by 
\begin{eqnarray*}
\psi :[0,\pi ]\rightarrow K_{\alpha }\,,\,t\mapsto \exp (tA_{\alpha }).
\end{eqnarray*}%
In particular, $\psi (0)=1$ and $\psi (\pi )=\exp (\pi A_{\alpha})$.
\end{lem}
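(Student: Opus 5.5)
The plan is to reduce everything to $\mathfrak{sl}(2,\R)$ through the representation $\rho$. In the split case $\mathfrak{g}(\alpha)\cong\mathfrak{sl}(2,\R)$, and $\rho(X-Y)=X_\alpha-X_{-\alpha}=A_\alpha$. So $\psi(t)=\exp(tA_\alpha)$ is the image under the group homomorphism induced by $\rho$ of
\[
\exp\bigl(t(X-Y)\bigr)=\begin{pmatrix}\cos t & \sin t\\ -\sin t & \cos t\end{pmatrix}\in SO(2).
\]
Here $SO(2)$ is the maximal compact subgroup of $SL(2,\R)$. Its image lies in $K_\alpha$, because $A_\alpha$ is $\theta$-fixed: $\theta(A_\alpha)=\theta(X_\alpha)+X_\alpha\cdot(-1)\cdot(-1)$ reduces, using $X_{-\alpha}=-\theta(X_\alpha)$, to $\theta(A_\alpha)=-X_{-\alpha}+X_\alpha=A_\alpha$. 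Hence $A_\alpha\in\mathfrak{k}\cap\mathfrak{g}(\alpha)$, and $\psi$ is continuous with values in $K_\alpha$. The identities $\psi(0)=1$ and $\psi(\pi)=\exp(\pi A_\alpha)$ are immediate.

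Next I verify the two conditions of Lemma \ref{parametrization} with $m=1$ and $B^1=[0,\pi]$, so that $S^0=\{0,\pi\}$. I work in the model flag $SL(2,\R)/P=\R P^1\cong S^1$, with $b_0$ the line spanned by $e_1$; the flag of $G(\alpha)$ is the image of this model under $\rho$.

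For the first condition, $\psi(0)=1\in M$ and $\psi(\pi)$ corresponds to $-I$. The element $-I$ centralizes $\mathfrak{a}$, so its image $\exp(\pi A_\alpha)$ centralizes $H^\vee_\alpha$. It also acts trivially on the remaining part of $\mathfrak{a}$, because $\mathrm{ad}(A_\alpha)$ kills $\ker\alpha\subset\mathfrak{a}$; indeed $[H,X_{\pm\alpha}]=\pm\alpha(H)X_{\pm\alpha}=0$ for $H\in\ker\alpha$. Therefore $\psi(\pi)\in M$ and fixes $b_0$.

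For the second condition, $\psi(t)$ sends $[e_1]$ to $[\cos t\, e_1-\sin t\, e_2]$. As $t$ ranges over $(0,\pi)$, this is a bijection onto $\R P^1\setminus\{[e_1]\}$, since the angle of the line runs through $(0,\pi)$ without repeating modulo $\pi$. The map is also a smooth immersion, with derivative nonzero in the angle coordinate. Hence it is a diffeomorphism onto the open cell $N\cdot s_\alpha b_0$, which is exactly the complement of $b_0$. The lemma's phrasing with $\psi(x)\cdot wb_0$ should be read with the appropriate base point. If the intended statement is $\psi(x)\cdot b_0$ landing in the complement of $b_0$, the computation above covers it directly. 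If instead it is $\psi(x)s_\alpha b_0$, note that $s_\alpha b_0=[e_2]$ is itself reached by $\psi(\pi/2)$, and the same angle argument applies after a shift of parameter.

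The main obstacle is transferring the computation from $SL(2,\R)$ to $G(\alpha)$. The homomorphism $SL(2,\R)\to G(\alpha)$ may fail to be injective, with kernel contained in $\{\pm I\}$. Also, the flag of $G(\alpha)$ is the orbit $G(\alpha)\cdot b_0\subset\mathbb{F}$. I handle both issues by noting that the isotropy of $b_0$ pulls back to the upper-triangular subgroup, because $\mathfrak{p}\cap\mathfrak{g}(\alpha)$ is spanned by $H^\vee_\alpha$ and $X_\alpha$. The orbit map therefore factors through a diffeomorphism $\R P^1\to G(\alpha)b_0=S^1$, and the $SL(2,\R)$ computation then transfers verbatim.
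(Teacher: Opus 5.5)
The paper gives no proof of this lemma; it is quoted from \cite{RSm19}. So your argument has to stand on its own. Most of the computation is correct and follows the natural route: $\theta(A_\alpha)=A_\alpha$, $\ad A_\alpha$ kills $\ker\alpha$, and $t\mapsto[\cos t\,e_1-\sin t\,e_2]$ is a diffeomorphism of $(0,\pi)$ onto $\R P^1\setminus\{[e_1]\}$. Your intermediate expression for $\theta(A_\alpha)$ is garbled, but $\theta(X_\alpha)=-X_{-\alpha}$ and $\theta(X_{-\alpha})=-X_\alpha$ give the result.

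The gap is the step the whole transfer rests on. You assume $\rho$ integrates to a group homomorphism $SL(2,\R)\to G(\alpha)$, and you discuss only its kernel, not whether it exists. Existence can fail, because $SL(2,\R)$ is not simply connected and $G$ is an arbitrary group with Lie algebra $\mathfrak g$. In the double cover of $SL(n,\R)$ with $n\geq 3$, or in $Mp(2,\R)$, the root subgroup $G(\alpha)$ is a connected double cover of $SL(2,\R)$, not a quotient of it. There $\exp(2\pi A_\alpha)$ is the nontrivial central element. This is exactly the obstruction the paper raises at the start of Section~\ref{sec:liebrackets}.

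The repair is short.
\begin{enumerate}
\item For the first condition you only need $\Ad(\psi(\pi))=e^{\pi\ad A_\alpha}$. On $\mathfrak g(\alpha)$ this equals $\rho\circ e^{\pi\ad(X-Y)}\circ\rho^{-1}=\rho\circ\Ad(-I)\circ\rho^{-1}=\mathrm{id}$, and no group homomorphism is involved.
\item For the second condition, note that $P=N_G(\mathfrak p)$ contains the normal subgroup $\ker\Ad$. Hence the $G$-action on $\mathbb F$ factors through $\Ad(G)$. The representation $\ad\circ\rho$ does integrate, via the simply connected $SL(2,\C)$ as in Section~\ref{sec:liebrackets}, to $\Phi\colon SL(2,\R)\to\Ad(G)$ with $\Phi(\exp t(X-Y))=\Ad(\psi(t))$.
\item The cleanest way to finish is the factorization $\exp t(X-Y)=\exp(-\cot t\,X)\exp(\tfrac{\pi}{2}(X-Y))\,b(t)$. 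Here $b(t)$ is upper triangular with positive diagonal for $t\in(0,\pi)$, so it is an exponential of an element of the upper triangular subalgebra. This gives $\psi(t)b_0=\exp(-\cot t\,X_\alpha)\,s_\alpha b_0$, which is the standard parametrization of the cell $\exp(\mathfrak g_\alpha)s_\alpha b_0$ composed with the diffeomorphism $-\cot\colon(0,\pi)\to\R$.
\end{enumerate}

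This route also bypasses your isotropy claim. As you justified it, that claim only identifies the identity component of the pulled-back stabilizer. You would also need the stabilizer to lie in the normalizer of the upper triangular subalgebra, which is the full upper triangular group, and to contain $-I$.

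Finally, drop the alternative reading with $s_\alpha b_0$. Since $\psi(\pi/2)s_\alpha b_0=\exp(\pi A_\alpha)b_0=b_0$, the map $x\mapsto\psi(x)s_\alpha b_0$ on $(0,\pi)$ hits $b_0$ and misses $s_\alpha b_0$. The ``$wb_0$'' in Lemma~\ref{parametrization} is a typo for $b_0$, as the definition of $\Phi_w$ confirms.
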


Given a simple root $\alpha$, under the identification $\weyl \cong M_*/M$, the reflection corresponding to $\alpha
$ is represented by $s_{\alpha} = \psi(\frac{\pi}{2}) = \exp(\frac{\pi}{2}A_\alpha)$.

It follows that, for each $i$, there exists $\psi _{i}:B^{d_{i}}\rightarrow K_{i}$, where $d_{i}$ is the dimension of the fiber of $\mathbb{F}\rightarrow \mathbb{F}_{i} $, that is, the dimension of the flag of $G(\alpha _{i})$. Let $B_{w}=B^{d_{1}}\times \cdots \times B^{d_{n}}$ be the ball of dimension $d=d_{1}+\cdots +d_{n}$. Then the characteristic map $\Phi
_{w}:B_{w}\rightarrow \mathbb{F}$ is defined by 
\begin{equation*}
\Phi _{w}(t_{1},\ldots ,t_{n})=\psi _{1}(t_{1})\cdots \psi _{n}(t_{n})\cdot
b_{0}.
\end{equation*}

\begin{rem} Different decompositions of $w$ lead to distinct characteristic maps. Therefore, the notation $\Phi_w$ should implicitly refer to a reduced decomposition of $w$, for instance $\Phi_{s_1 \cdots s_n}$. To preserve this simplified notation, we will henceforth fix a set of reduced expressions for each $w \in \mathcal{W}$.
\end{rem}

\begin{prop}[\cite{RSm19}, Prop. 1.9]
\label{characteristic_map}Let $w=s_{1}\cdots s_{n}$ be a reduced decomposition. Let $\Phi _{w}:B_{w}\rightarrow \mathbb{F}$ be the map defined above and take $\mathbf{t}=(t_{1},\ldots ,t_{n})\in B_{w}$. Then, $\Phi _{w}$ is a characteristic map for $\mathcal{S}_{w}$, that is,

\begin{enumerate}
\item $\Phi_w (B_w) \subset \mathcal{S}_w$.

\item $\Phi _{w}(\mathbf{t})\in \mathcal{S}_{w}\setminus N\cdot wb_{0}$ if
and only if $\mathbf{t}\in \partial B_{w}=S^{d-1}$.

\item $\Phi |_{B_{w}^{\circ }}:B_{w}^{\circ }\rightarrow N\cdot wb_{0}$ is a
diffeomorphism ($B_{w}^{\circ }$ is the interior of $B_{w}$).
\end{enumerate}
\end{prop}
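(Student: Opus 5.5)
I would prove the three items by induction on the length $n$ of the reduced decomposition $w=s_1\cdots s_n$. The tools are Lemma \ref{parametrization} (with its one-dimensional realization Lemma \ref{compact_parametrization_1}) and the identity $\mathcal{S}_w=K_1\cdots K_n\cdot b_0$. For $n=1$, everything happens inside the rank one group $G(\alpha_1)$: the orbit $K_1\cdot b_0$ is the sphere $S^{d_1}$ of $G(\alpha_1)$. Lemma \ref{parametrization} says $\psi_1$ sends the open ball diffeomorphically onto the complement of $b_0$, which is $N\cdot s_1b_0$, and sends the boundary to $b_0$. These are exactly (1)--(3).

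For the inductive step, write $w=s_1w_1$ with $w_1=s_2\cdots s_n$, again reduced. Then
\[
\Phi_w(t_1,\mathbf{t}')=\psi_1(t_1)\,\Phi_{w_1}(\mathbf{t}').
\]
Item (1) follows immediately, since $\psi_1(t_1)\in K_1$ and $\Phi_{w_1}(B_{w_1})\subset\mathcal{S}_{w_1}$, so the image lies in $K_1\mathcal{S}_{w_1}=\mathcal{S}_w$.

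For (3), take $t_1$ interior. The rank one lemma gives $\psi_1(t_1)b_0=n_1s_1b_0$ with $n_1\in N_{\alpha_1}$, and in fact $\psi_1(t_1)\in N_{\alpha_1}s_1 M A N$, a Bruhat decomposition inside $G(\alpha_1)$. Now use the standard facts:
\begin{itemize}
\item $s_1 N_{w_1} s_1^{-1}\subset N$ when $\ell(s_1w_1)=\ell(w_1)+1$;
\item $N\cdot wb_0=N_{\alpha_1}s_1(N\cdot w_1b_0)$, with the factorization $N_w\cong N_{\alpha_1}\times s_1N_{w_1}s_1^{-1}$.
\end{itemize}
From these one shows that the interior map is a bijection onto $N\cdot wb_0$, with invertible differential. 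Concretely, compose with the diffeomorphism $N_{\alpha_1}\times N\cdot w_1b_0\to N\cdot wb_0$ and apply the inductive hypothesis to $\Phi_{w_1}$. The main obstacle is handling the $MAN$ factor: it is absorbed because $MA$ normalizes $N$ and fixes the cell structure, but one must check that the resulting map is still a diffeomorphism and not just a bijection. I would do this by writing the interior map as a composite of diffeomorphisms.

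For (2), the boundary of $B_w$ consists of points with $t_1\in\partial B^{d_1}$ or $\mathbf{t}'\in\partial B_{w_1}$.
\begin{itemize}
\item If $t_1\in\partial B^{d_1}$, then $\psi_1(t_1)\in M$, so the image is $m\,\Phi_{w_1}(\mathbf{t}')\in M\mathcal{S}_{w_1}=\mathcal{S}_{w_1}$. Since $\dim\mathcal{S}_{w_1}<\dim\mathcal{S}_w$, this point is not in $N\cdot wb_0$.
\item If instead $\mathbf{t}'\in\partial B_{w_1}$, the inductive hypothesis gives $\Phi_{w_1}(\mathbf{t}')\in N\cdot ub_0$ for some $u<w_1$. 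Then $\psi_1(t_1)\Phi_{w_1}(\mathbf{t}')$ lies in $K_1 N u b_0$, which is contained in $N u b_0\cup N s_1 u b_0$ (the standard $P_1$-orbit multiplication rule). Both $u<w$ and $s_1u<w$ hold in the Bruhat order, so the point avoids $N\cdot wb_0$.
\end{itemize}
Conversely, an interior point maps into $N\cdot wb_0$ by (3). This gives the "if and only if".
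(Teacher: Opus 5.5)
The paper does not prove this proposition; it quotes it from \cite{RSm19}, so there is no in-paper argument to compare yours with. Judged on its own, your induction on $\ell(w)$ is correct. You peel off $s_1$, use $N\cdot wb_0=N_{\alpha_1}s_1(N\cdot w_1b_0)$ on the interior, and use the Tits-system rule $P_1\cdot N ub_0\subset Nub_0\cup Ns_1ub_0$ on the boundary. This is the standard Bott--Samelson-type argument. It needs only the identity $\mathcal{S}_w=K_1\cdots K_n\cdot b_0$, Lemma \ref{parametrization}, and Bruhat decomposition facts.

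A few places should be tightened.

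In the first boundary case, $\dim\mathcal{S}_{w_1}<\dim\mathcal{S}_w$ does not by itself prevent $\mathcal{S}_{w_1}$ from meeting $N\cdot wb_0$. The correct reason is that $\mathcal{S}_{w_1}=\bigcup_{u\leqslant w_1}N\cdot ub_0$ is a union of Bruhat cells with $u\neq w$. Equivalently, $\mathcal{S}_{w_1}$ is $N$-invariant, so meeting the orbit $N\cdot wb_0$ would force it to contain that orbit.

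In the second boundary case you only need $u\neq w$ and $s_1u\neq w$. This follows from $\ell(s_1u)\leqslant\ell(u)+1<\ell(w)$. The Bruhat comparison $s_1u<w$ is true but unnecessary.

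In (3), the ``composite of diffeomorphisms'' should be written out, because $p(t_1)$ varies with $t_1$. Write $\psi_1(t_1)=f(t_1)\,s_1\,p(t_1)$ with $f(t_1)\in N_{\alpha_1}$ and $p(t_1)\in P$. Then the interior map factors as
\[
(t_1,\mathbf{t}')\longmapsto\bigl(f(t_1),\,p(t_1)\Phi_{w_1}(\mathbf{t}')\bigr)\longmapsto f(t_1)\,s_1\,p(t_1)\Phi_{w_1}(\mathbf{t}').
\]
The second arrow is the diffeomorphism $N_{\alpha_1}\times N\cdot w_1b_0\to N\cdot wb_0$.

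The first arrow is triangular, with smooth inverse
\[
(n,x)\longmapsto\bigl(f^{-1}(n),\,(\Phi_{w_1}|_{B_{w_1}^{\circ}})^{-1}(p(f^{-1}(n))^{-1}x)\bigr).
\]
This inverse is smooth for the following reasons:
\begin{itemize}
\item $f$ is a diffeomorphism onto $N_{\alpha_1}$ by Lemma \ref{parametrization}, composed with $n\mapsto ns_1b_0$.
\item $p(t_1)=s_1^{-1}f(t_1)^{-1}\psi_1(t_1)$ is smooth because $f$ and $\psi_1$ are smooth on the open ball; in the split case $\psi_1(t)=\exp(tA_{\alpha_1})$.
\item $P=MAN$ preserves $N\cdot w_1b_0$, since $MA$ normalizes $N$ and is normalized by $M_*$.
\item $\Phi_{w_1}$ restricted to the interior is a diffeomorphism by the inductive hypothesis.
\end{itemize}
With these additions the argument is complete.
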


Let $d=\dim \mathcal{S}_{w}=\dim N\cdot wb_{0}$. The
sphere $S^{d}$ is the quotient $B_{w}/\partial (B_{w})$ where the boundary
is collapsed to a point. We can do the same with the Schubert cell $\mathcal{S}_{w}$. 
Define $\sigma _{w}=S_{w}/(S_{w}\setminus N\cdot wb_{0})$, i.e.,
the space obtained by identifying the complement of the Bruhat cell $
\mathcal{S}_{w}\setminus N\cdot wb_{0}$ in $\mathcal{S}_{w}$ to a point. As $
\Phi _{w}(\partial (B_{w}))\subset S_{w}\setminus N\cdot wb_{0}$, it follows
that $\Phi _{w}$ induces a map $S^{d}\rightarrow \sigma _{w}$ which is a
homeomorphism. The inverse of this homeomorphism will be denoted by 
\begin{align}\label{Sphere}
\Phi _{w}^{-1}\colon\sigma _{w}\rightarrow S^{d} 
\end{align}
(although this is not the same as the inverse of $\Phi _{w}$).

The following proposition provides a characterization of the covering pairs, which will prove to be very useful.

\begin{prop}[\cite{RSm19}, Prop. 1.10]
\label{propappendix} Let $w,w^{\prime }\in \mathcal{W}$. The following
statements are equivalent.

\begin{enumerate}
\item $\mathcal{S}_{w^{\prime }}\subset \mathcal{S}_{w}$ and $\dim 
\mathcal{S}_{w}-\dim \mathcal{S}_{w^{\prime }}=1$.

\item If $w=s_{1}\cdots s_{n}$ is a reduced expression of $w\in \mathcal{
W}$ as a product of simple reflections, then

\subitem (i) $w^{\prime }=s_{1}\cdots \widehat{s_{i}}\cdots s_{n}$ is a reduced
expression.

\subitem (ii) If $s_{i}=s_{\alpha _{i}}$ then $\mathfrak{g}(\alpha
_{i})\cong \mathfrak{s}\mathfrak{l}(2,\mathbb{R})$. This is the same as
saying that the fiber of $\mathbb{F}\rightarrow \mathbb{F}_{i}$ has dimension $1$.
\end{enumerate}
\end{prop}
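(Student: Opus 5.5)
The plan is to prove the two directions separately. Throughout, fix the reduced expression $w=s_1\cdots s_n$ and use the characteristic map $\Phi_w$ of Proposition \ref{characteristic_map}, together with the description $\mathcal{S}_w=K_1\cdots K_n\cdot b_0$. The basic tool is the subword characterization of the Bruhat order, combined with the fact that $\dim N\cdot ub_0=\ell(u)$ in the split case. Whenever a simple root $\alpha_i$ has rank-one algebra $\mathfrak{g}(\alpha_i)\cong\mathfrak{sl}(2,\R)$, the factor $K_i$ acts through a circle. The corresponding ball $B^{d_i}$ is then $[0,\pi]$, and by Lemma \ref{compact_parametrization_1} its two endpoints are $1$ and $\exp(\pi A_{\alpha_i})\in M$.

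\textbf{(2)$\Rightarrow$(1).} Suppose $w'=s_1\cdots\widehat{s_i}\cdots s_n$ is reduced. Taking the $i$-th parameter equal to $0$ makes $\psi_i(0)=1$, so
\[
\mathcal{S}_{w'}=K_1\cdots\widehat{K_i}\cdots K_n\cdot b_0\subset K_1\cdots K_n\cdot b_0=\mathcal{S}_w .
\]
Since both expressions are reduced and every $d_j=1$ in the split setting, the dimensions are $\ell(w)=n$ and $\ell(w')=n-1$, so they differ by exactly one. Condition (ii) holds automatically in the split case, and it is what guarantees $d_i=1$; in general one would argue $\dim\mathcal{S}_w-\dim\mathcal{S}_{w'}=d_i$, so equality with $1$ forces (ii).

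\textbf{(1)$\Rightarrow$(2).} The inclusion $\mathcal{S}_{w'}\subset\mathcal{S}_w$ means $N\cdot w'b_0\subset\overline{N\cdot wb_0}$, which is equivalent to $w'\leqslant w$ in the Bruhat–Chevalley order. By the subword property, $w'$ is then a product of a subword of $s_1\cdots s_n$. Write $\dim\mathcal{S}_{w'}=\sum_{j\in J} d_j$ over a reduced subword indexed by $J$. The hypothesis $\dim\mathcal{S}_w-\dim\mathcal{S}_{w'}=1$ then says that the omitted indices carry total multiplicity $1$. Since every $d_j\geqslant1$, exactly one index $i$ is omitted and $d_i=1$. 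This gives (i), namely that $w'=s_1\cdots\widehat{s_i}\cdots s_n$ is reduced, and it gives (ii), since $d_i=1$ means the fiber of $\mathbb{F}\to\mathbb{F}_i$ is $S^1$, i.e.\ $\mathfrak{g}(\alpha_i)\cong\mathfrak{sl}(2,\R)$.

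The main obstacle is justifying that $w'$ admits a \emph{reduced} subword expression inside the fixed decomposition of $w$, and not merely some subword. I would handle this with the standard deletion argument: any non-reduced subword can be shortened, using the exchange condition, to a reduced subword of the same word representing the same element. The second delicate point is identifying closure containment with the Bruhat order. For that I would cite the cellular statement already recorded, $\mathcal{S}_w=\bigcup_{u\leqslant w}N\cdot ub_0$.
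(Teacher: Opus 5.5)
Your argument is correct: closure containment gives $w'\leqslant w$, and the subword property together with the deletion condition yields a reduced subword of $s_1\cdots s_n$ for $w'$. Since dimension is additive over reduced words and every $d_j\geqslant 1$, a codimension-one drop forces exactly one deleted letter, with $d_i=1$; the converse follows from $1\in K_i$ and the same count. The paper does not reprove this proposition and only cites \cite{RSm19}, and your route (closure order equals Bruhat order, plus a dimension count) is essentially the standard argument behind that reference.
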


\begin{rem} Given $w^{\prime }$ as above, the decomposition $w^{\prime
}=s_{1}\cdots \widehat{s_{i}}\cdots s_{n}$ is unique. In fact, if $w=s_{1}\cdots
s_{i}\cdots s_{j}\cdots s_{n}$ and $w^{\prime }=s_{1}\cdots s_{i}\cdots \widehat{s_{j}}\cdots s_{n}$, then $s_{i+1}\cdots s_{j}=s_{i}\cdots s_{j-1}$, which
cannot happen.
\end{rem}

\section{Lie bracket formulas and exponential identities}\label{sec:liebrackets}

In this section, we establish the explicit Lie theoretic formulas required to get the signs in the boundary map of cellular homology. Indeed, these formulas will help to deal with different reduced decompositions for a given element of the Weyl group, as we will see in the next section.

This approach relies on computing the adjoint action of the Weyl group representatives $\exp(\pi A_\alpha)$ on the root spaces $\mathfrak{g}_\beta$. To evaluate this action explicitly, the rotation $\exp(\pi A_\alpha)$ must be diagonalized, which requires complex coefficients. However, a fundamental technical obstruction arises over the reals since the rank-one subgroup generated by $\mathfrak{g}(\alpha) \cong \mathfrak{sl}(2, \mathbb{R})$ is not simply connected. To address this issue, we perform these exponentiations in the complexification $\mathfrak{g}_{\mathbb{C}}$ and its associated simply connected group $SL(2, \mathbb{C})$. This ensures the existence of a unique global representation $\Phi$ satisfying $e^{\ad\circ\phi(X)} = \Phi(\exp(X))$, which justifies the operator identity $e^{\ad(\pi A_\alpha)} = e^{\ad(i \pi H_\alpha^\vee)}$ (\cite{RSm19}, Eq. (5)). Applying this diagonal operator to the complexified root space $\mathfrak{g}_{\mathbb{C}, \beta}$ yields the scalar eigenvalue $e^{i \pi \langle \beta, \alpha^\vee \rangle} = (-1)^{\langle \beta, \alpha^\vee \rangle}$. Since the real root space $\mathfrak{g}_\beta$ is an invariant subspace under the action of $M$, this complex lifting isolates the exact real parity that provides the attaching maps of the Schubert cells, allowing us to proceed with the fundamental bracket relations (see \cite{Wig98}, Lem 4.2, \cite{RSm19}, Lem. 1.8).

Let $\mathfrak{g}_\C$ be the complexified semisimple Lie algebra and $\mathfrak{h}$ be a Cartan sub-algebra with root system $\Pi$ and a set of simple roots $\Sigma$. Consider $X_\alpha$ as defined previously. For the pair $\alpha,\beta\in \Pi$, define
\begin{equation}\label{eq:defmab}
[X_{\alpha},X_{\beta}]=m_{\alpha,\beta}X_{\alpha+\beta},
\end{equation}
with $\mab=0$ if and only if $\alpha+\beta$ is not a root. It satisfies $m_{-\alpha,-\beta}=-\mab$ and $m_{\beta,\alpha}=-\mab$. Moreover, $m_{\alpha,\beta}$ is real (see \cite{Kna02}, Thm. 6.6).

Recall that, for each $\alpha \in \Pi$, $A_\alpha=X_\alpha-X_{-\alpha}$. In particular, $A_{-\alpha} = -A_\alpha$ and  
\begin{equation}\label{eq:colcheteA}
    [A_{\alpha},A_{\beta}]=\mab \Aab + m_{-\alpha,\beta}A_{\alpha-\beta}.
\end{equation}

\begin{prop}\label{prop:sequence} Let $\alpha,\beta \in \Sigma$. Then,
    \begin{align*}
        [X_{-\alpha},[X_\alpha,X_\beta]] &= -\frac{2 \langle\beta,\alpha\rangle}{\langle\alpha,\alpha\rangle} X_\beta,\\
        [X_\alpha,[X_{-\alpha},X_{-\beta}]] &= -\frac{2 \langle\beta,\alpha\rangle}{\langle\alpha,\alpha\rangle} X_{-\beta}.
    \end{align*}
\end{prop}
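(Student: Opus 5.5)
The plan is to use the Jacobi identity together with the fact that, for distinct simple roots $\alpha,\beta\in\Sigma$, the difference $\beta-\alpha$ is not a root. I will implicitly take $\alpha\neq\beta$, since for $\alpha=\beta$ the bracket $[X_\alpha,X_\alpha]=0$ and the formula would fail. Consequently $[X_{-\alpha},X_\beta]=0$, because $\beta-\alpha\notin\Pi\cup\{0\}$.

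For the first identity, I expand $[X_{-\alpha},[X_\alpha,X_\beta]]$ with the Jacobi identity:
\begin{equation*}
[X_{-\alpha},[X_\alpha,X_\beta]] = [[X_{-\alpha},X_\alpha],X_\beta] + [X_\alpha,[X_{-\alpha},X_\beta]].
\end{equation*}
The second term vanishes by the observation above. For the first term, I use the normalization fixed earlier, $[X_\alpha,X_{-\alpha}]=H_\alpha^\vee=\frac{2H_\alpha}{\langle\alpha,\alpha\rangle}$, which gives $[X_{-\alpha},X_\alpha]=-H_\alpha^\vee$. Then $[H_\alpha^\vee,X_\beta]=\beta(H_\alpha^\vee)X_\beta=\frac{2\beta(H_\alpha)}{\langle\alpha,\alpha\rangle}X_\beta$. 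Since $H_\alpha$ is defined so that $\beta(H_\alpha)=\langle\beta,\alpha\rangle$, the coefficient is $\frac{2\langle\beta,\alpha\rangle}{\langle\alpha,\alpha\rangle}$, and with the minus sign this is the claimed formula.

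The second identity follows the same route. Expanding by Jacobi gives
\begin{equation*}
[X_\alpha,[X_{-\alpha},X_{-\beta}]]=[[X_\alpha,X_{-\alpha}],X_{-\beta}]+[X_{-\alpha},[X_\alpha,X_{-\beta}]].
\end{equation*}
The last term is zero because $\alpha-\beta$ is not a root. The first term equals $[H_\alpha^\vee,X_{-\beta}]=-\beta(H_\alpha^\vee)X_{-\beta}$, which yields the same coefficient. One could alternatively apply the Cartan involution $\theta$ to the first identity, using $\theta(X_\gamma)=-X_{-\gamma}$; the three sign changes produce an overall $-1$ on both sides, but the direct computation is cleaner.

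There is no real obstacle; the only care needed is with sign conventions. Specifically, I need to check the orientation $[X_\alpha,X_{-\alpha}]=+H_\alpha^\vee$ from the normalization $\langle X_\alpha,X_{-\alpha}\rangle=2/\langle\alpha,\alpha\rangle$, and confirm that $\beta(H_\alpha)=\langle\beta,\alpha\rangle$ under the identification of $H_\alpha$ with $\alpha$ via the Killing form.
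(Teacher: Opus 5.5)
Your argument is correct, and it differs from the paper's only in where the key computation comes from. The paper quotes the general $\alpha$-string identity $[X_{-\alpha},[X_\alpha,X_\beta]]=q(p+1)X_\beta$ from Knapp's Corollary~2.37, which holds under the normalization $\langle X_\alpha,X_{-\alpha}\rangle=2/\langle\alpha,\alpha\rangle$. It then specializes to simple roots: there $p=0$, and $p-q=\frac{2\langle\beta,\alpha\rangle}{\langle\alpha,\alpha\rangle}$ gives $q(p+1)=-\frac{2\langle\beta,\alpha\rangle}{\langle\alpha,\alpha\rangle}$. The second identity comes from applying the same formula to the pair $-\alpha,-\beta$.

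You instead do the $p=0$ case by hand. You use one application of the Jacobi identity, the vanishing of $[X_{-\alpha},X_\beta]$ (resp.\ $[X_\alpha,X_{-\beta}]$) because $\pm(\beta-\alpha)\notin\Pi$, and the normalization $[X_\alpha,X_{-\alpha}]=H_\alpha^\vee$ fixed in Section~3. This is exactly the base case of the induction behind Knapp's formula, so the mathematical content is the same. Your version is self-contained and makes every sign visible, which suits a paper whose purpose is tracking signs. It is also the same kind of Jacobi computation the paper itself carries out later in the double-edge case. The citation buys brevity and a formula valid for arbitrary root strings, which is not needed here.

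Your remark that $\alpha\neq\beta$ must be assumed is right, and it applies to the paper's proof too. For $\alpha=\beta$ the $\alpha$-string through $\alpha$ has $p=2$, $q=0$, so the left side is $0$ while the right side is $-2X_\alpha$. Your alternative derivation of the second identity via the Cartan involution, using $\theta(X_{\pm\gamma})=-X_{\mp\gamma}$, is also valid.
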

\begin{proof} 
Let $\alpha,\beta\in\Pi$. Consider $p$ and $q$ given by the $\alpha$ string containing $\beta$:
\begin{equation*}
\beta-p\alpha, \ldots, \beta, \ldots, \beta + q\alpha.
\end{equation*}
Since  $\left\langle X_\alpha,X_{-\alpha}\right\rangle = \frac{2}{\langle\alpha,\alpha\rangle}$, it follows from \cite[Cor. 2.37]{Kna02} that
\begin{equation*}
[X_{-\alpha},[X_{\alpha},X_\beta]] = q(p+1)X_{\beta}.
\end{equation*}

When $\alpha$ and $\beta$ are simple roots, $p=0$ and $-q = \frac{2 \langle\beta,\alpha\rangle}{\langle\alpha,\alpha\rangle}$. Similarly, for the negative roots $-\alpha$ and $-\beta$, we have $p=0$ and $-q = \frac{2 \langle-\beta,-\alpha\rangle}{\langle-\alpha,-\alpha\rangle} = \frac{2 \langle\beta,\alpha\rangle}{\langle\alpha,\alpha\rangle}$.
\end{proof}

\begin{cor} Let $\alpha,\beta \in \Sigma$. If the connection between $\alpha$ and $\beta$ is simple, then 
\begin{equation}\label{eq:simpleconnection}
[X_{-\alpha},[X_\alpha,X_\beta]]=[X_\alpha,[X_{-\alpha},X_{-\beta}]]=X_{\beta}.
\end{equation}

If the connection between $\alpha$ and $\beta$ is double, and the Killing number is $\frac{2 \langle\beta,\alpha\rangle}{\langle\alpha,\alpha\rangle}=-2$, i.e., $\alpha$ is the short root, then 
\begin{align}
[X_{-\alpha},[X_\alpha,X_\beta]]&=[X_\alpha,[X_{-\alpha},X_{-\beta}]]=2X_{\beta},\label{eq:doubleconnection1}\\
[X_{-\beta},[X_\beta,X_\alpha]]&=[X_\beta,[X_{-\beta},X_{-\alpha}]]=X_{\alpha}.\label{eq:doubleconnection2}
\end{align}
\end{cor}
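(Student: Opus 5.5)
The corollary should follow directly from Proposition \ref{prop:sequence}. The only work is to evaluate the Cartan integer $\frac{2\langle\beta,\alpha\rangle}{\langle\alpha,\alpha\rangle}$ in each case and insert it into the two displayed identities. The plan is to fix two distinct simple roots $\alpha,\beta\in\Sigma$. I will read off the Cartan integers from the Dynkin diagram and then apply the proposition once with the roles $(\alpha,\beta)$, and once with the roles swapped.

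For a simple connection, $\alpha$ and $\beta$ have the same length and $\langle\alpha,\beta\rangle<0$. Hence $\frac{2\langle\beta,\alpha\rangle}{\langle\alpha,\alpha\rangle}=-1$. Substituting into Proposition \ref{prop:sequence} gives $[X_{-\alpha},[X_\alpha,X_\beta]]=X_\beta$ and $[X_\alpha,[X_{-\alpha},X_{-\beta}]]=X_{-\beta}$. This is the content of \eqref{eq:simpleconnection}. Note that the proposition's second line gives $X_{-\beta}$, not $X_\beta$, so the statement's final "$=X_\beta$" should be read as the corresponding root vector, $X_{-\beta}$ for the second bracket. I would flag this explicitly.

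For a double connection with $\alpha$ short, the hypothesis gives $\frac{2\langle\beta,\alpha\rangle}{\langle\alpha,\alpha\rangle}=-2$. The proposition then yields the coefficient $2$ in both brackets of \eqref{eq:doubleconnection1}, again with $X_{-\beta}$ in the second. For \eqref{eq:doubleconnection2}, I apply Proposition \ref{prop:sequence} with $\alpha$ and $\beta$ interchanged, which is allowed since both are simple. This requires $\frac{2\langle\alpha,\beta\rangle}{\langle\beta,\beta\rangle}$. Since $\langle\beta,\beta\rangle=2\langle\alpha,\alpha\rangle$ in a double bond, this equals $\frac{2\langle\beta,\alpha\rangle}{2\langle\alpha,\alpha\rangle}=-1$. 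That gives the coefficient $1$ in both brackets, producing $X_\alpha$ and $X_{-\alpha}$ respectively.

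The only point needing care is the length ratio $\langle\beta,\beta\rangle/\langle\alpha,\alpha\rangle=2$ in the double-bond case. I would justify it from the product of the two Cartan integers, $\frac{4\langle\alpha,\beta\rangle^2}{\langle\alpha,\alpha\rangle\langle\beta,\beta\rangle}=2$: one factor is $-2$, so the other must be $-1$. This avoids any appeal to explicit root realizations. Beyond this, there is no real obstacle; everything is a substitution into Proposition \ref{prop:sequence}.
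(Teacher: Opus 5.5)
Your proposal is correct and is the intended argument. The paper gives no separate proof: the corollary is just Proposition \ref{prop:sequence} with the Cartan integers $-1$, $-2$, and (with $\alpha,\beta$ swapped) $-1$ substituted, exactly as you do. You are also right that the second bracket in each line lies in $\mathfrak{g}_{-\beta}$ (resp. $\mathfrak{g}_{-\alpha}$), so those right-hand sides should read $X_{-\beta}$, $2X_{-\beta}$ and $X_{-\alpha}$. The typo is harmless, since only the first brackets are used later to compute $m_{-\alpha,\alpha+\beta}$ and $m_{-\beta,\alpha+\beta}$.
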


Our objective now is to calculate the bracket between the elements $A_{\alpha}$, as given by Equation \eqref{eq:colcheteA}. This involves determining the coefficients $\mab$ for certain choices of $\alpha$ and $\beta$. 

\subsection{Simple edge}
Assume that $\alpha$ and $\beta$ are simple roots connected by a simple edge in the Dynkin diagram. Consequently, $\alpha+\beta$ is a root (implying $m_{\alpha,\beta} \neq 0$), whereas $\alpha-\beta$ is not (implying $m_{-\alpha,\beta}=0$), so that $[A_{\alpha},A_{\beta}]=\mab \Aab$. 

Notice that, by Equation \eqref{eq:colcheteA}, $[A_{\alpha},A_{\alpha+\beta}]=m_{\alpha,\alpha+\beta} A_{2\alpha+\beta}-m_{-\alpha,\alpha+\beta}A_{\beta}$. 
Since $2\alpha+\beta$ is not a root, then $m_{\alpha,\alpha+\beta}=0$. The coefficient $m_{-\alpha,\alpha+\beta}$, according to Equations \eqref{eq:defmab} and \eqref{eq:simpleconnection}, is given by
\begin{align*}
    m_{-\alpha,\alpha+\beta}X_{\beta} = [X_{-\alpha},X_{\alpha+\beta}] = \dfrac{1}{\mab}[X_{-\alpha},[X_{\alpha},X_{\beta}]] = \dfrac{1}{\mab}X_{\beta}.
\end{align*}

We conclude that
\begin{align*}
[A_{\alpha},A_{\alpha+\beta}]&=-\dfrac{1}{\mab}A_{\beta}, \\
[A_{\beta},A_{\alpha+\beta}]&=-\dfrac{1}{m_{\beta,\alpha}}A_{\alpha}=\dfrac{1}{\mab}A_{\alpha}.
\end{align*}

Writing $\ad A_{\alpha},\ad A_{\beta}$ in the basis $\{A_{\alpha},A_{\beta},A_{\alpha+\beta}\}$, we have
\begin{align*}
\ad A_{\alpha} &= 
\begin{pmatrix} 0&0&0\\ 0&0&-\frac{1}{\mab}\\ 0&\mab&0\end{pmatrix}, &
\ad A_{\beta} &= 
\begin{pmatrix} 0&0&\frac{1}{\mab}\\ 0&0&0\\ -\mab&0&0\end{pmatrix}.
\end{align*}

For every $\delta\in\Pi$, we have that $\Ad(\exp (tA_{\delta}))=e^{t\ad A_{\delta}}$. The exponential of the adjoint representation can be obtained as a rotation matrix from Rodrigues' formula in $\R^2$.

\begin{prop}\label{prop:rodriguesformula1}
Let $A$ be the matrix
\begin{equation*}
    A = \begin{pmatrix}
        0 & - \frac{1}{k} \\
        k & 0
    \end{pmatrix}
\end{equation*}
for $k\in\C-\{0\}$. Then, the exponential matrix is
\begin{equation*}
    e^{tA} = (\cos t) I + (\sin t) A =
    \begin{pmatrix}
        \cos t & - \frac{1}{k}\sin t\\
        k\sin t & \cos t
    \end{pmatrix}.
\end{equation*}
\end{prop}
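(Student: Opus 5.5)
The plan is to compute the square of $A$ and then sum the exponential series directly, without diagonalizing. Squaring gives
\begin{equation*}
A^2 = \begin{pmatrix} 0 & -\frac{1}{k} \\ k & 0 \end{pmatrix}\begin{pmatrix} 0 & -\frac{1}{k} \\ k & 0 \end{pmatrix} = \begin{pmatrix} -1 & 0 \\ 0 & -1 \end{pmatrix} = -I ,
\end{equation*}
because the off-diagonal products are $(-\tfrac1k)(k) = -1$ and $k(-\tfrac1k) = -1$. The parameter $k$ cancels here. This is the whole content of the statement: $A$ behaves exactly like the standard rotation generator $J$ with $J^2=-I$, for any nonzero complex $k$.

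From $A^2=-I$ it follows by induction that $A^{2m} = (-1)^m I$ and $A^{2m+1} = (-1)^m A$. Next I split the series $e^{tA} = \sum_{j\geqslant 0} \frac{t^j}{j!}A^j$ into its even and odd parts. Absolute convergence of the matrix exponential justifies rearranging the terms. The even part is $\sum_m \frac{(-1)^m t^{2m}}{(2m)!} I = (\cos t) I$, and the odd part is $\sum_m \frac{(-1)^m t^{2m+1}}{(2m+1)!} A = (\sin t) A$. Writing out the entries of $(\cos t)I + (\sin t)A$ gives the displayed matrix.

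There is no real obstacle. The only point to check is that nothing in the argument needs $k$ to be real. The computation of $A^2$ is purely algebraic, and the series manipulation works over $\C$. This matters because the proposition will be applied with $k = m_{\alpha,\beta}$, and possibly in the complexification. As an alternative check, both sides solve $X' = AX$ with $X(0)=I$, since $\frac{d}{dt}\bigl((\cos t)I + (\sin t)A\bigr) = -(\sin t)I + (\cos t)A = A\bigl((\cos t)I + (\sin t)A\bigr)$, where the last equality again uses $A^2=-I$. Uniqueness of solutions then gives the identity.
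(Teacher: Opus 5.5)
Your proof is correct and takes the same approach as the paper: the paper's proof is the single observation that $A^2=-I$, from which the exponential series splits into its cosine and sine parts. Your explicit even/odd splitting of the series and the ODE uniqueness check simply fill in the details the paper leaves implicit.
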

\begin{proof}
This follows easily from the fact that $A^2 = -I$.
\end{proof}

Geometrically, for real values of $k$, the exponential in Proposition \ref{prop:rodriguesformula1} corresponds to an elliptical rotation defined by the equation $k^2 x^2 + y^2 = \lambda$, where $\lambda>0$.

Using Proposition \ref{prop:rodriguesformula1}, we have
\begin{align}
\mathrm{Ad}(\exp tA_{\alpha}) &= 
\begin{pmatrix} 1&0&0\\  0&\cos(t)&-\frac{1}{\mab}\sin(t)\\  0&\mab \sin(t)&\cos(t)\end{pmatrix}, \label{eq:expAalpha} \\
\mathrm{Ad}(\exp tA_{\beta}) &= 
\begin{pmatrix} \cos(t)&0&\frac{1}{\mab}\sin(t)\\  0&1&0\\  -\mab\sin(t)&0&\cos(t)\end{pmatrix}. \label{eq:expAbeta}
\end{align}

Given a simple root $\alpha$, recall that $s_{\alpha} = \exp(\frac{\pi}{2}A_\alpha)$. Then, $s_{\alpha}^{-1} = \exp(-\frac{\pi}{2}A_\alpha)$.

\begin{prop}\label{prop:ligacaosimples} Suppose that $\alpha, \beta \in \Sigma$ are connected by a simple edge. For $t\in[0,\pi]$, it holds that
\begin{align}
    \exp(t A_\alpha) \cdot s_\beta \cdot s_\alpha &= s_\beta \cdot s_\alpha \cdot \exp(t A_\beta),\label{eq:exp1}\\
    s_\alpha \cdot \exp(tA_\beta) \cdot s_\alpha &= s_\beta \cdot \exp\left((\pi - t)A_\alpha\right) \cdot s_\beta. \label{eq:exp2}
\end{align}
\end{prop}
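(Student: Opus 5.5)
Both identities will be reduced to the conjugation rule $g\exp(tX)g^{-1}=\exp(t\,\Ad(g)X)$, which holds exactly in any Lie group, so no information is lost to the kernel of $\Ad$. I will then compute $\Ad(s_\alpha^{\pm1})$ and $\Ad(s_\beta^{\pm1})$ on the basis $\{A_\alpha,A_\beta,A_{\alpha+\beta}\}$ by evaluating the matrices \eqref{eq:expAalpha} and \eqref{eq:expAbeta} at $t=\pm\frac{\pi}{2}$. Throughout I use $s_\alpha=\exp(\frac{\pi}{2}A_\alpha)$ and $s_\alpha^{-1}=\exp(-\frac{\pi}{2}A_\alpha)$.

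\emph{Identity \eqref{eq:exp1}.} It is equivalent to $\exp\bigl(t\,\Ad(s_\alpha^{-1}s_\beta^{-1})A_\alpha\bigr)=\exp(tA_\beta)$, so it suffices to show $\Ad(s_\alpha^{-1})\Ad(s_\beta^{-1})A_\alpha=A_\beta$.
\begin{itemize}
\item The first column of \eqref{eq:expAbeta} at $t=-\frac{\pi}{2}$ gives $\Ad(s_\beta^{-1})A_\alpha=\mab A_{\alpha+\beta}$.
\item The third column of \eqref{eq:expAalpha} at $t=-\frac{\pi}{2}$ gives $\Ad(s_\alpha^{-1})A_{\alpha+\beta}=\frac{1}{\mab}A_\beta$.
\end{itemize}
Composing the two yields exactly $A_\beta$, which proves \eqref{eq:exp1}.

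\emph{Identity \eqref{eq:exp2}.} Write $s_\alpha=s_\alpha^{-1}\exp(\pi A_\alpha)$ for the right-hand factor on the left side, and similarly for $s_\beta$ on the right side. The identity then becomes
\begin{equation*}
\exp\bigl(t\,\Ad(s_\alpha)A_\beta\bigr)\exp(\pi A_\alpha)=\exp\bigl((\pi-t)\Ad(s_\beta)A_\alpha\bigr)\exp(\pi A_\beta).
\end{equation*}
From the second column of \eqref{eq:expAalpha} and the first column of \eqref{eq:expAbeta} at $t=\frac{\pi}{2}$,
\begin{equation*}
\Ad(s_\alpha)A_\beta=\mab A_{\alpha+\beta},\qquad \Ad(s_\beta)A_\alpha=-\mab A_{\alpha+\beta}.
\end{equation*}
Since both exponents lie on the same one-parameter subgroup, the identity reduces to
\begin{equation*}
\exp(\pi\mab A_{\alpha+\beta})=\exp(\pi A_\beta)\exp(-\pi A_\alpha).
\end{equation*}

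\emph{Step 1: $\mab^2=1$.} Apply invariance of the Killing form to $\langle[X_\alpha,X_\beta],X_{-\alpha-\beta}\rangle$. Use $[X_{-\alpha},X_{\alpha+\beta}]=\frac{1}{\mab}X_\beta$, the normalization $\langle X_\gamma,X_{-\gamma}\rangle=2/\langle\gamma,\gamma\rangle$, and the fact that $\alpha,\beta,\alpha+\beta$ have equal length on a simple edge. This should give $\mab^2=1$.

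\emph{Step 2: evaluate the elements $\exp(\pi A_\gamma)$.} Because $\exp(\pi A_\gamma)=\exp(-\pi A_\gamma)$, the sign of $\mab$ is irrelevant. Each $\exp(\pi A_\gamma)$ equals $\exp(i\pi H_\gamma^\vee)$, computed in the image of $SL(2,\C)$ as in the discussion preceding \eqref{eq:defmab}. For a simple edge $H_{\alpha+\beta}^\vee=H_\alpha^\vee+H_\beta^\vee$, and the $H^\vee$'s commute. Since $\exp(2\pi iH_\alpha^\vee)=1$, we get $\exp(i\pi(H_\beta^\vee-H_\alpha^\vee))=\exp(i\pi(H_\alpha^\vee+H_\beta^\vee))$, which is the reduced identity.

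\emph{Main obstacle.} The delicate step is this last one: it is an equality of group elements in $M$, not only of their adjoint actions. It rests on the identities $\exp(\pi A_\gamma)=\exp(i\pi H_\gamma^\vee)$ holding in the group itself, which is only ensured through the $SL(2,\C)$ lift; passing through $\Ad$ alone would lose the central ambiguity. If that lift is problematic, I would fall back on verifying the identity directly in the rank-two subgroup of type $A_2$, that is, in $SL(3,\R)$ with explicit matrices.
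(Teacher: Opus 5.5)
Your proof of \eqref{eq:exp1} is correct and is the paper's argument; the paper merely splits your conjugation by $s_\beta s_\alpha$ into $s_\beta^{-1}\exp(tA_\alpha)s_\beta=\exp(\mab t\Aab)$ and $s_\alpha\exp(tA_\beta)s_\alpha^{-1}=\exp(\mab t\Aab)$. Your reduction of \eqref{eq:exp2} to $\exp(\pi\mab\Aab)=\exp(\pi A_\beta)\exp(-\pi A_\alpha)$ is also an exact equivalence.

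The gap is Step 2, the point you flagged yourself. The paper's $SL(2,\C)$ discussion only gives the operator identity $e^{\ad(\pi A_\alpha)}=e^{\ad(i\pi H_\alpha^\vee)}$. Reading $\exp(\pi A_\gamma)=\exp(i\pi H_\gamma^\vee)$ as an equality of group elements requires $G$ to sit inside a complex group with Lie algebra $\g_\C$, and this is not assumed. Moreover, the claim $\exp(\pi A_\gamma)=\exp(-\pi A_\gamma)$ is false in general. In the double cover of $SL(3,\R)$ (where $K\cong SU(2)$), $\exp(2\pi A_\gamma)$ is the nontrivial central element. There the sign of $\mab$ is \emph{not} irrelevant: exactly one sign makes your reduced identity true. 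Your fallback to explicit matrices in $SL(3,\R)$ does not help, because the rank-two subgroup of $G$ attached to $\{\alpha,\beta\}$ can be exactly this double cover (e.g.\ inside the double cover of $SL(n,\R)$). So as written you have proved \eqref{eq:exp2} only for linear $G$, i.e.\ modulo the center. That would suffice for the application, since $Z(G)\subset M$ acts trivially on $\mathbb{F}$, but it is not the identity as stated.

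The repair needs neither Step 1 nor the complexification, only one more conjugation. By your own computation of $\Ad(s_\alpha)A_\beta$, you have $s_\alpha\exp(\pi A_\beta)s_\alpha^{-1}=\exp(\pi\mab\Aab)$. Hence the reduced identity is equivalent to $s_\alpha s_\beta^{2}s_\alpha=s_\beta^{2}$, i.e.\ to $s_\beta^{-2}s_\alpha s_\beta^{2}=s_\alpha^{-1}$. The first column of \eqref{eq:expAbeta} at $t=-\pi$ then gives
\begin{equation*}
s_\beta^{-2}s_\alpha s_\beta^{2}=\exp\bigl(\tfrac{\pi}{2}\Ad(\exp(-\pi A_\beta))A_\alpha\bigr)=\exp\bigl(-\tfrac{\pi}{2}A_\alpha\bigr).
\end{equation*}
This is essentially what the paper does. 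It never isolates the elements $\exp(\pi A_\gamma)$, but chains exact conjugation identities. Its only extra ingredient is $s_\beta^{-1}s_\alpha s_\alpha=s_\alpha s_\alpha s_\beta$, obtained from $s_\alpha^{-1}s_\beta^{-1}s_\alpha=\exp(\mab\frac{\pi}{2}\Aab)=s_\alpha s_\beta s_\alpha^{-1}$, which again comes down to $\Ad(\exp(-\pi A_\alpha))A_\beta=-A_\beta$. Since $g\exp(X)g^{-1}=\exp(\Ad(g)X)$ holds in every Lie group, that argument is valid for any $G$ with Lie algebra $\g$, with no central ambiguity.
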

\begin{proof}
    First of all, we have the following relations:
    \begin{align}
        s_{\beta}^{-1}\exp(tA_\alpha)s_\beta &= \exp\left(t\Ad\left(\exp\left(-\frac{\pi}{2} A_{\beta}\right)\right)A_\alpha\right)=\exp(m_{\alpha,\beta}t\Aab);\label{eq:relation1} \\
        s_{\alpha}\exp(tA_\beta)s_\alpha^{-1} &= \exp\left(t\Ad\left(\exp\left(\frac{\pi}{2} A_{\alpha}\right)\right)A_\beta\right)=\exp(m_{\alpha,\beta}t\Aab).\label{eq:relation2}
    \end{align}

    For \eqref{eq:exp1}, using Equations \eqref{eq:relation1} and \eqref{eq:relation2}, we have
    \begin{align*}
        \exp(t A_\alpha) s_\beta s_\alpha &= s_\beta \exp(m_{\alpha,\beta}t\Aab) s_\alpha = s_\beta s_\alpha \exp(t A_\beta).
    \end{align*}

    For \eqref{eq:exp2}, 
    \begin{align*}
        s_\alpha \exp(tA_\beta) s_\alpha &= \exp(m_{\alpha,\beta}t\Aab) s_\alpha s_\alpha = \exp(-m_{\beta,\alpha}t\Aab) s_\alpha s_\alpha \\
        &= s_\beta \exp(-t A_\alpha) s_\beta^{-1} s_\alpha s_\alpha = s_\beta \exp(-t A_\alpha) s_\alpha \exp(m_{\alpha,\beta}\frac{\pi}{2}A_{\alpha+\beta}) s_\alpha \\
        &= s_\beta \exp(-t A_\alpha) s_\alpha s_\alpha s_\beta = s_\beta \exp((\pi-t) A_\alpha) s_\beta. \qedhere
    \end{align*}
\end{proof}

\subsection{Double edge}
Assume that $\alpha,\beta\in \Sigma$ are connected by a double edge in the Dynkin diagram, with $\alpha$ denoting the short root (i.e., $\langle \beta, \alpha^\vee \rangle = -2$). As in the case of a single edge, we have $[A_{\alpha},A_{\beta}]=\mab \Aab$.

By Equation \eqref{eq:colcheteA}, $[A_{\alpha},A_{\alpha+\beta}]=m_{\alpha,\alpha+\beta} A_{2\alpha+\beta} -m_{-\alpha,\alpha+\beta}A_{\beta}$. An analogous calculation to the simple edge case provides $m_{-\alpha, \alpha+\beta}=\dfrac{2}{\mab}$. However, in this case, $2\alpha+\beta$ is a root and $m_{\alpha,\alpha+\beta}\neq 0$. Therefore, we conclude that
\begin{equation*}
[A_{\alpha},A_{\alpha+\beta}]=m_{\alpha,\alpha+\beta} A_{2\alpha+\beta} -\dfrac{2}{\mab}A_{\beta}.
\end{equation*}

By Equation \eqref{eq:colcheteA}, $[A_{\beta},A_{\alpha+\beta}]=m_{\beta,\alpha+\beta} A_{\alpha+2\beta} -m_{-\beta,\alpha+\beta}A_{\alpha}$. It is enough to get the coefficient $m_{-\beta,\alpha+\beta}$ since $m_{\beta,\alpha+\beta}=0$. According to Equations \eqref{eq:defmab} and \eqref{eq:doubleconnection2}, 
\begin{align*}
    m_{-\beta,\alpha+\beta}X_{\alpha} = [X_{-\beta},X_{\alpha+\beta}] = -\dfrac{1}{\mab}[X_{-\beta},[X_{\beta},X_{\alpha}]] = -\dfrac{1}{\mab}X_{\alpha}.
\end{align*}
Hence, we have that
\begin{equation*}
[A_{\beta},A_{\alpha+\beta}]=\dfrac{1}{\mab}A_{\alpha}.
\end{equation*}

By Equation \eqref{eq:colcheteA}, $ [A_{\alpha},A_{2\alpha+\beta}]=-m_{-\alpha,2\alpha+\beta}A_{\alpha+\beta}$ since $m_{\alpha,2\alpha+\beta}=0$. Observe that Equation \eqref{eq:defmab} provides
\begin{align*}
    m_{-\alpha,2\alpha+\beta}X_{\alpha+\beta} & = [X_{-\alpha},X_{2\alpha+\beta}] = \dfrac{1}{m_{\alpha,\alpha+\beta}}[X_{-\alpha},[X_{\alpha},X_{\alpha+\beta}]]\\
    &=\dfrac{1}{\mab\cdot m_{\alpha,\alpha+\beta}}[X_{-\alpha},[X_{\alpha},[X_{\alpha},X_{\beta}]]].
\end{align*}

By the Jacobi identity,
\begin{align*}
[X_{-\alpha},[X_{\alpha},[X_{\alpha},X_{\beta}]]]&=[[X_{-\alpha},X_{\alpha}],[X_{\alpha},X_{\beta}]]+[X_{\alpha},[X_{-\alpha},[X_{\alpha},X_{\beta}]]] \\
&= -[H_{\alpha}^\vee,[X_{\alpha},X_{\beta}]]+2[X_\alpha,X_{\beta}] = 2\mab X_{\alpha+\beta}
\end{align*}
since
\begin{equation*}
[H_{\alpha}^\vee,[X_{\alpha},X_{\beta}]] = (\alpha+\beta)(H_\alpha)\frac{2 \mab}{\langle \alpha,\alpha \rangle}\cdot X_{\alpha+\beta}=\left(2+ \dfrac{2\langle \beta,\alpha \rangle}{\langle \alpha,\alpha \rangle} \right)\mab\cdot X_{\alpha+\beta}=0. 
\end{equation*}

Therefore, $m_{-\alpha,2\alpha+\beta}=\dfrac{2}{m_{\alpha,\alpha+\beta}}$. We conclude that
\begin{equation*}
[A_{\alpha},A_{2\alpha+\beta}]=-\dfrac{2}{m_{\alpha,\alpha+\beta}}A_{\alpha+\beta}.
\end{equation*}

The bracket $[A_{\beta},A_{2\alpha+\beta}]=0$ since both coefficients $m_{\beta,2\alpha+\beta}$ and $m_{-\beta,2\alpha+\beta}$ are zero. 

In summary:
\begin{align*}
[A_{\alpha},A_{\alpha+\beta}]&=m_{\alpha,\alpha+\beta} A_{2\alpha+\beta} -\dfrac{2}{\mab}A_{\beta}, & [A_{\alpha},A_{2\alpha+\beta}]&=-\dfrac{2}{m_{\alpha,\alpha+\beta}}A_{\alpha+\beta}, \\ [A_{\beta},A_{\alpha+\beta}]&= \dfrac{1}{\mab}A_{\alpha}, &
[A_{\beta},A_{2\alpha+\beta}]&=0.
\end{align*}

Expressing $\ad A_{\alpha},\ad A_{\beta}$ in the basis $\{A_{\alpha},A_{\beta},A_{\alpha+\beta}, A_{2\alpha+\beta}\}$, we have
\begin{align*}
\ad A_{\alpha} &= 
\begin{pmatrix} 0&0&0&0\\  0&0&-\frac{2}{\mab}&0\\  0&\mab&0&-\frac{2}{m_{\alpha,\alpha+\beta}} \\  0&0&m_{\alpha,\alpha+\beta}&0\end{pmatrix},&
\ad A_{\beta} &= 
\begin{pmatrix} 0&0&\frac{1}{\mab}&0\\  0&0&0&0\\  -\mab&0&0&0 \\  0&0&0&0\end{pmatrix}.
\end{align*}  

The following result provides a closed form for the exponential map. It can be viewed as a generalization of the standard Rodrigues' rotation formula in $SO(3)$.

\begin{prop}
\label{prop:rodriguesformula2}
Let $A$ be the matrix
\begin{equation*}
    A = \begin{pmatrix}
        0 & -\frac{2}{k} & 0 \\
        k & 0 & -\frac{2}{p} \\
        0 & p & 0
    \end{pmatrix},
\end{equation*}
for $k,p\in\C-\{0\}$. Then, the exponential matrix is
\begin{equation*}
    e^{tA} = I + \left( \frac{\sin 2t}{2} \right) A + \left(\frac{\sin^2 t}{2}\right)A^2 = \begin{pmatrix} \cos^2(t)&-\frac{1}{k}\sin(2t)& \frac{2}{k p}\sin^2(t)\\  \frac{k}{2}\sin(2t)&\cos(2t)&-\frac{1}{p}\sin(2t) \\ \frac{kp}{2}\sin^2(t)&\frac{p}{2}\sin(2t)&\cos^2(t)\end{pmatrix}.
\end{equation*}
\end{prop}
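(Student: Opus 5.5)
The plan is to imitate the proof of Proposition \ref{prop:rodriguesformula1}: find a low-degree polynomial identity satisfied by $A$ and reduce the exponential series with it. A direct computation gives
\begin{equation*}
A^2 = \begin{pmatrix} -2 & 0 & \frac{4}{kp} \\ 0 & -4 & 0 \\ kp & 0 & -2 \end{pmatrix}.
\end{equation*}
Multiplying once more by $A$ should give $A^3 = -4A$. For example, the first row of $A^3$ is $(0,\frac{8}{k},0)$, which matches $-4$ times the first row of $A$; the other rows are checked the same way. The parameters $k,p$ cancel in every entry, so the identity holds for all nonzero complex $k,p$. This is just Cayley--Hamilton: the characteristic polynomial is $\lambda^3+4\lambda$, since the trace is $0$, the determinant is $0$, and the sum of principal $2\times 2$ minors is $2+2=4$.

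Given $A^3=-4A$, induction gives $A^{2m+1}=(-4)^m A$ and $A^{2m+2}=(-4)^m A^2$ for $m\geq 0$. Splitting the exponential series into odd and even powers, I get
\begin{equation*}
e^{tA} = I + \Big(\sum_{m\ge0}\frac{(-1)^m (2t)^{2m+1}}{2(2m+1)!}\Big)A + \Big(\sum_{m\ge0}\frac{(-1)^m 2^{2m} t^{2m+2}}{(2m+2)!}\Big)A^2 .
\end{equation*}
The first bracket is $\frac{\sin 2t}{2}$. The second is $\frac{1-\cos 2t}{4}=\frac{\sin^2 t}{2}$. This proves the first equality.

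For the explicit matrix, I will substitute $A$ and $A^2$ and simplify entrywise using $1-\sin^2 t=\cos^2 t$ and $1-2\sin^2 t=\cos 2t$. For instance, the $(1,1)$ entry is $1-\sin^2 t=\cos^2 t$, the $(1,3)$ entry is $\frac{\sin^2 t}{2}\cdot\frac{4}{kp}=\frac{2}{kp}\sin^2 t$, and the $(2,1)$ entry is $\frac{k}{2}\sin 2t$. The only real obstacle is making sure the entries of $A^3$ are computed correctly. The rest is routine trigonometry.
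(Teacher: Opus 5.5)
Your proof is correct and follows the paper's argument: both obtain $A^3=-4A$ from the characteristic polynomial, split the exponential series into odd and even powers to get $I+\frac{\sin 2t}{2}A+\frac{1-\cos 2t}{4}A^2$, and finish with $1-\cos 2t=2\sin^2 t$. You also compute $A^2$ explicitly and check the final matrix entry by entry, which fills in a step the paper leaves implicit.
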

\begin{proof}
    The characteristic polynomial of $A$ is $p(\lambda) = -\lambda^3 - 4\lambda$, and then $A^3 = -4A$. Consequently, for $n \ge 0$, $A^{2n+1} = (-4)^n A$, and $A^{2n+2} = (-4)^n A^2$. Substituting these into the Taylor series of $e^{tA}$:
    \begin{align*}
e^{tA} &= I + \sum_{n=0}^{\infty} \frac{t^{2n+1}}{(2n+1)!}A^{2n+1} + \sum_{n=0}^{\infty} \frac{t^{2n+2}}{(2n+2)!}A^{2n+2} \\
&= I + \left( \sum_{n=0}^{\infty} \frac{(-1)^n (2t)^{2n+1}}{(2n+1)!} \right) \frac{A}{2} + \left( \sum_{n=0}^{\infty} \frac{(-1)^n (2t)^{2n+2}}{(2n+2)!} \right) \frac{A^2}{4} \\
&= I + \frac{\sin(2t)}{2} A + \frac{1 - \cos(2t)}{4} A^2.
\end{align*}
Using the identity $1 - \cos(2t) = 2\sin^2(t)$, the result follows.
\end{proof}

Geometrically, for real values of $k$ and $p$, the exponential in Proposition \ref{prop:rodriguesformula2} corresponds to an elliptic rotation defined by the intersection of the plane $x+\frac{2}{kp}z = \lambda$ and the cylinder $(x-\frac{2}{kp}z)^2+\frac{4}{k^2}y^2=\mu$, where $\lambda,\mu\in\R$, $\mu>0$.

Denote $n_{\alpha,\beta}=\mab\cdot m_{\alpha,\alpha+\beta}$. It follows that
\begin{align}
\displaystyle \mathrm{Ad}(\exp tA_{\alpha}) &= 
\begin{pmatrix} 1&0&0&0\\ 0&\cos^2(t)&-\frac{1}{\mab}\sin(2t)& \frac{2}{\nab}\sin^2(t)\\ 0& \frac{\mab}{2}\sin(2t)&\cos(2t)&-\frac{1}{m_{\alpha,\alpha+\beta}}\sin(2t) \\  0& \frac{\nab}{2}\sin^2(t)&\frac{m_{\alpha,\alpha+\beta}}{2}\sin(2t)&\cos^2(t)\end{pmatrix}, \label{eq:expAalphadupla} \\
\mathrm{Ad}(\exp tA_{\beta}) &= 
\begin{pmatrix} \cos(t)&0&\frac{1}{\mab}\sin(t)&0\\ 0&1&0&0\\ -\mab\sin(t)&0&\cos(t)&0 \\ 0&0&0&1\end{pmatrix}. \label{eq:expAbetadupla}
\end{align}

\begin{rem} Observe that in the basis $\{A_{\alpha},\mab A_{\alpha+\beta}, A_{\beta}-\frac{\nab}{2}A_{2\alpha+\beta},A_{\beta}+\frac{\nab}{2}A_{2\alpha+\beta}\}$, we have
\begin{equation*}
\ad A_{\alpha}=\begin{pmatrix} 0&0&0&0\\ 0&0&2&0\\ 0&-2&0&0 \\ 0&0&0&0\end{pmatrix}
\Rightarrow
\Ad(\exp(tA_{\alpha}))=\begin{pmatrix}
1&0&0&0\\ 0&\cos(2t)&\sin(2t)&0\\  0&-\sin(2t)&\cos(2t)&0 \\  0&0&0&1\end{pmatrix}.
\end{equation*}
\end{rem}

\begin{prop}\label{prop:ligacaodupla}
Suppose the edge between $\alpha$ and $\beta$ is double, with $\alpha$ being the short root. For $t\in[0,\pi]$, it holds that
\begin{align}
    \exp(t A_\alpha)\cdot s_{\beta}\cdot s_{\alpha} \cdot s_{\beta} &= s_{\beta}\cdot s_{\alpha} \cdot s_{\beta} \cdot \exp(t A_\alpha),\label{eq:expB1}\\
    s_{\alpha} \cdot \exp(t A_\beta) \cdot s_{\alpha} \cdot s_{\beta} &=s_{\beta} \cdot s_{\alpha}  \cdot \exp(t A_\beta) \cdot s_{\alpha},\label{eq:expB2}\\
    s_{\alpha} \cdot s_{\beta} \cdot \exp(t A_\alpha) \cdot s_{\beta} &= s_{\beta} \cdot \exp((\pi-t) A_\alpha) \cdot s_{\beta} \cdot s_{\alpha}, \label{eq:expB3}\\
    s_{\alpha} \cdot s_{\beta} \cdot s_{\alpha} \cdot \exp(t A_\beta) &= \exp(t A_\beta) \cdot s_{\alpha} \cdot s_{\beta} \cdot s_{\alpha}.\label{eq:expB4}
\end{align}
\end{prop}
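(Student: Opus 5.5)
Mirroring the proof of Proposition \ref{prop:ligacaosimples}, the plan is to move exponentials past the reflections by conjugation: for $g$ in the group, $g\exp(tA_\gamma)g^{-1}=\exp(t\Ad(g)A_\gamma)$. So we first compute, from Equations \eqref{eq:expAalphadupla} and \eqref{eq:expAbetadupla}, the action of $\Ad(s_\alpha^{\pm1})$ and $\Ad(s_\beta^{\pm1})$ on the basis $\{A_\alpha,A_\beta,A_{\alpha+\beta},A_{2\alpha+\beta}\}$. Evaluating at $t=\pm\pi/2$ gives:
\begin{align*}
\Ad(s_\beta^{\pm1})A_\alpha &= \mp\mab A_{\alpha+\beta}, & \Ad(s_\beta^{\pm1})A_{\alpha+\beta} &= \pm\tfrac{1}{\mab}A_\alpha, & \Ad(s_\beta)A_{2\alpha+\beta} &= A_{2\alpha+\beta},\\
\Ad(s_\alpha^{\pm1})A_\beta &= \tfrac{\nab}{2}A_{2\alpha+\beta}, & \Ad(s_\alpha^{\pm1})A_{2\alpha+\beta} &= \tfrac{2}{\nab}A_\beta, & \Ad(s_\alpha^{\pm1})A_{\alpha+\beta} &= -A_{\alpha+\beta}.
\end{align*}
Here $\cos^2(\pi/2)=0$, $\sin^2=1$ and $\sin(\pi)=0$.

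\textbf{Identities \eqref{eq:expB1} and \eqref{eq:expB4}.} These say that $w=s_\beta s_\alpha s_\beta$ (respectively $s_\alpha s_\beta s_\alpha$) satisfies $\Ad(w^{-1})A_\alpha=A_\alpha$ (respectively $\Ad(w)A_\beta=A_\beta$). We check this by chaining the table. For \eqref{eq:expB1}:
\[
A_\alpha \mapsto \mab A_{\alpha+\beta} \mapsto -\mab A_{\alpha+\beta} \mapsto A_\alpha .
\]
For \eqref{eq:expB4}:
\[
A_\beta \mapsto \tfrac{\nab}{2}A_{2\alpha+\beta} \mapsto \tfrac{\nab}{2}A_{2\alpha+\beta} \mapsto A_\beta .
\]
Then $\exp(tA_\alpha)\,w=w\exp(t\Ad(w^{-1})A_\alpha)=w\exp(tA_\alpha)$, and similarly for $\beta$.

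\textbf{Identity \eqref{eq:expB2}.} Rewrite it as
\[
\exp\bigl(t\Ad(s_\alpha)A_\beta\bigr)\, s_\alpha^2 s_\beta \;=\; s_\beta\, \exp\bigl(t\Ad(s_\alpha)A_\beta\bigr)\, s_\alpha^2 .
\]
By the table the exponent is $\tfrac{\nab}{2}A_{2\alpha+\beta}$, and $\Ad(s_\beta)$ fixes $A_{2\alpha+\beta}$, so this exponential commutes with $s_\beta$. It then remains to show that $s_\alpha^2$ commutes with $s_\beta$. Here $s_\alpha^2=\exp(\pi A_\alpha)$ acts by $\pm1$ on root spaces; the eigenvalue is $(-1)^{\langle\beta,\alpha^\vee\rangle}=1$ on $\g_{\pm\beta}$, by the complexification argument at the start of this section. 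Hence $\Ad(s_\alpha^2)A_\beta=A_\beta$, which is also visible in the matrix at $t=\pi$, and so $s_\alpha^2$ commutes with $\exp(sA_\beta)$ for all $s$.

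\textbf{Identity \eqref{eq:expB3}.} This is the delicate case, because of the parameter reversal $t\mapsto\pi-t$. Move everything to one side and conjugate $\exp(tA_\alpha)$ by $s_\alpha s_\beta$:
\[
\Ad(s_\alpha s_\beta)A_\alpha=\Ad(s_\alpha)(-\mab A_{\alpha+\beta})=\mab A_{\alpha+\beta}.
\]
Conjugating by $s_\beta$ instead gives $\Ad(s_\beta)A_\alpha=-\mab A_{\alpha+\beta}$. So the left side is $\exp(t\mab A_{\alpha+\beta})\,s_\alpha s_\beta^2$, and the right side is $\exp(-(\pi-t)\mab A_{\alpha+\beta})\,s_\beta^2 s_\alpha$.

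We therefore need
\[
s_\alpha s_\beta^2 = \exp(-\pi\mab A_{\alpha+\beta})\, s_\beta^2 s_\alpha .
\]
First, $s_\beta^2$ commutes with $s_\alpha$: by the same parity argument, $\langle\alpha,\beta^\vee\rangle=-1$ gives $\Ad(s_\beta^2)A_\alpha=-A_\alpha$, so $s_\beta^2 s_\alpha s_\beta^{-2}=s_\alpha^{-1}$. The identity then reduces to
\[
s_\alpha^2=\exp(-\pi\mab A_{\alpha+\beta}) .
\]
Both sides are elements of order dividing two in $M$, acting on root spaces by signs. We would verify that they agree, either by comparing their adjoint actions via the parity formula $(-1)^{\langle\gamma,\alpha^\vee\rangle}$ versus $(-1)^{\langle\gamma,(\alpha+\beta)^\vee\rangle}$ (noting $(\alpha+\beta)^\vee=2\alpha^\vee+\beta^\vee$ in this normalisation), or by a direct computation in the rank-two group.

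This last equality is where we expect the main obstacle. Equality of adjoint actions only gives equality up to the center, and the factor $\mab$ enters the exponent, so $\mab$ must be normalised (we expect $\mab=\pm1$ to be forced by the normalisation of $X_\alpha$). The fallback is to verify the identity directly in the $\mathfrak{sp}(4,\R)$ realisation with explicit matrices, which is legitimate because the rank-two subgroup generated by $\g(\alpha)$ and $\g(\beta)$ contains all the elements involved.
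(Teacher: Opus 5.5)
Your arguments for \eqref{eq:expB1}, \eqref{eq:expB2} and \eqref{eq:expB4} are correct and match the paper's. Everything is pushed through the exact identity $g\exp(X)g^{-1}=\exp(\Ad(g)X)$, and your fact that $s_\alpha^2$ commutes with $s_\beta$ is the paper's \eqref{eq:relation15}.

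The gap is in \eqref{eq:expB3}. Your reduction to $s_\alpha^2=\exp(-\pi\mab\Aab)$ is valid. The sentence ``$s_\beta^2$ commutes with $s_\alpha$'' is a slip: what you actually use, correctly, is $s_\beta^2s_\alpha s_\beta^{-2}=s_\alpha^{-1}$, which is \eqref{eq:relation16}. But you stop at this point, and neither proposed route finishes the argument. Matching adjoint actions determines the two elements only up to a central element, as you say yourself. The $Sp(4,\R)$ matrix check is not carried out, and it would only cover the case where the rank-two subgroup is a quotient of $Sp(4,\R)$. The parity route is also set up wrongly. Since $\alpha$ and $\alpha+\beta$ are both short, $(\alpha+\beta)^\vee=\alpha^\vee+2\beta^\vee$, not $2\alpha^\vee+\beta^\vee$. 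With your formula the parities already disagree on $\g_{\alpha}$ ($3$ versus $2$), which would suggest the identity is false.

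The missing observation is that you never need to compare two group elements through $\Ad$: the identity follows from conjugation relations you already have. Use $\Ad(s_\beta^{-1})A_\alpha=\mab\Aab$ at parameter $-\pi$, together with the fact that $s_\alpha^{2}$ commutes with $s_\beta$:
\[
\exp(-\pi\mab\Aab)=s_\beta^{-1}\exp(-\pi A_\alpha)\,s_\beta=s_\beta^{-1}s_\alpha^{-2}s_\beta=s_\alpha^{-2}.
\]
Next, $s_\alpha^2$ commutes with $s_\beta^2$, while conjugation by $s_\beta^2$ inverts $s_\alpha$. Hence $s_\alpha^2=s_\beta^2s_\alpha^2s_\beta^{-2}=s_\alpha^{-2}$, i.e.\ $s_\alpha^4=1$. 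Combining the two gives $s_\alpha^2=\exp(-\pi\mab\Aab)$ exactly, in any $G$. No normalisation of $\mab$ is needed and there is no central ambiguity.

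This is why the paper's chain for \eqref{eq:expB3} goes through without difficulty. It only invokes \eqref{eq:relation11}, \eqref{eq:relation12}, \eqref{eq:relation15}, \eqref{eq:relation16} and $s_\alpha^2\exp(-tA_\alpha)=\exp((\pi-t)A_\alpha)$. All of these are conjugation identities, so the obstacle you flagged never arises.
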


\begin{proof}
The proof is based on the same arguments as those of Proposition \ref{prop:ligacaosimples}, through appropriate conjugations and computing Equations \eqref{eq:expAalphadupla} and \eqref{eq:expAbetadupla} at specific values of $t$. In particular, we get the following useful equations:
\begin{align}
        s_{\beta}^{-1}\exp(tA_\alpha)s_\beta &=
         \exp(m_{\alpha,\beta}t\Aab)=
         s_{\beta}\exp(-tA_\alpha)s_\beta^{-1};
         \label{eq:relation11} \\
         s_{\alpha}\exp(tA_{\alpha+\beta})s_\alpha^{-1} &= 
         \exp(-t\Aab)= s_{\alpha}^{-1}\exp(tA_{\alpha+\beta})s_\alpha;\label{eq:relation12}\\
         s_{\alpha}\exp(tA_{\beta})s_\alpha^{-1} &= \exp\left(\dfrac{n_{\alpha,\beta}}{2}tA_{2\alpha+\beta}\right)=s_{\alpha}^{-1}\exp(tA_{\beta})s_\alpha; \label{eq:relation13}\\
         s_{\beta}\exp(tA_{2\alpha+\beta})s_\beta^{-1} &=\exp(tA_{2\alpha+\beta}). \label{eq:relation14}
\end{align}

From Equations \eqref{eq:relation11} and \eqref{eq:relation13}, we derive
\begin{align}
s_{\beta}s_{\beta}s_{\alpha}^{-1}&=s_{\alpha}s_{\beta}s_{\beta};\label{eq:relation16}\\
    s_\alpha s_\alpha s_{\beta}&=s_{\beta}s_\alpha s_\alpha.\label{eq:relation15}
\end{align}

For \eqref{eq:expB1}, by Equations \eqref{eq:relation11} and \eqref{eq:relation12}, we have
\begin{align*} 
   \exp(t A_\alpha) s_{\beta} s_{\alpha} s_{\beta} &= s_{\beta} \exp(\mab t\Aab) s_{\alpha} s_{\beta}= s_{\beta} s_{\alpha} \exp(-\mab t\Aab) s_{\beta}= s_{\beta} s_{\alpha} s_{\beta} \exp(t A_\alpha).
\end{align*}

For \eqref{eq:expB2}, by Equations \eqref{eq:relation13},\eqref{eq:relation14}, and \eqref{eq:relation15}, we have
\begin{align*}
    s_{\alpha}\exp(t A_\beta)s_{\alpha}s_{\beta} 
    &=  \exp\left(\frac{\nab}{2} t A_{2\alpha+\beta}\right) s_\alpha s_\alpha s_{\beta} = \exp\left(\frac{\nab}{2} t A_{2\alpha+\beta}\right) s_{\beta} s_\alpha s_\alpha\\
    &= s_{\beta} \exp\left(\frac{\nab}{2} t A_{2\alpha+\beta}\right) s_\alpha s_\alpha= s_{\beta} s_{\alpha} \exp(t A_{\beta}) s_{\alpha}.
\end{align*}

For \eqref{eq:expB3}, by Equations \eqref{eq:relation11},\eqref{eq:relation12},\eqref{eq:relation16} and \eqref{eq:relation15}, we have
\begin{align*}
    s_{\alpha} s_{\beta}\exp(t A_\alpha)s_{\beta} 
    &= s_{\alpha}s_{\beta}s_{\beta}\exp( \mab t \Aab) =  s_{\alpha} s_{\beta}s_{\beta} s_{\alpha}^{-1} \exp(-\mab t\Aab) s_{\alpha}\\
    &= s_{\alpha} s_{\alpha} s_{\beta} s_{\beta} s_{\beta}^{-1} \exp(-t A_\alpha)s_{\beta} s_{\alpha} = s_{\beta}s_{\alpha} s_{\alpha}  \exp(-t A_\alpha) s_{\beta} s_{\alpha}\\
    &= s_{\beta} \exp((\pi-t) A_\alpha) s_{\beta}s_{\alpha}.
\end{align*}

For \eqref{eq:expB4}, by Equations \eqref{eq:relation13} and \eqref{eq:relation14}, we have
\begin{align*}
s_{\alpha}s_{\beta}s_{\alpha}\exp(t A_\beta) &= s_{\alpha}s_{\beta}\exp\left(\frac{\nab}{2} t A_{2\alpha+\beta}\right) s_{\alpha} = s_{\alpha}\exp\left(\frac{\nab}{2}tA_{2\alpha+\beta}\right)s_{\beta}s_{\alpha} \\
&= \exp(t A_\beta) s_{\alpha}s_{\beta} s_{\alpha}.\qedhere
\end{align*}
\end{proof}

\section{Change of coordinate map}\label{sec:changeofcoordinate}

Given $w\in\weyl$, let $\rr{w}$ and $\rrh{w}$ be two reduced decompositions of $w$. As mentioned above, the characteristic maps $\Phi_\rr{w}$ and $\Phi_\rrh{w}$ are (possibly) distinct. We may regard them as different choices of coordinates on the Schubert cell $\schub_{w}$, and thus $\Phi_\rrh{w}^{-1}\circ\Phi_\rr{w}$ will be called the change of coordinate map. Notice that such maps are, \emph{a priori}, defined only the interiors of the balls. After collapsing the boundary to a point, one obtains a homeomorphism between spheres. Thus, the usage of the expression ``change of coordinate'' implicitly assumes that we are restricting to the interior of the balls.  

It is known that any reduced decomposition of $w$ can be obtained from another one by a sequence of relations among its generators $\Sigma=\{s_1, \ldots, s_n\}$ subject to the following relations:
\begin{enumerate}
\item Commutation: $s_is_j=s_js_i$, when $|i-j|\geq 2$;
\item Short braid: $s_is_{i+1}s_i=s_{i+1}s_{i}s_{i+1}$ for simple edges;
\item Long braid: $s_is_{i+1}s_is_{i+1}=s_{i+1}s_{i}s_{i+1}s_i$ for double edges.
\end{enumerate}

Every time a new reduced decomposition is obtained by applying one of these relations, we will say that a movement (move) has been made, whether it is a commutation, a short, or a long braid relation. So, we assume that the reduced decompositions $\rr{w}$ and $\rrh{w}$ of $w$ are connected by a sequence of moves $f_1 f_2\cdots f_p$ as follows
\begin{equation*}
    \rr{w} = \rr{w}_0 \xrightarrow{f_1} \rr{w}_1 \xrightarrow{f_2} \cdots \xrightarrow{f_p} \rr{w}_p=\rrh{w},
\end{equation*}
where $\rr{w}_1,\dots,\rr{w}_{p-1}$ are reduced decompositions of $w$.
Then,
\begin{equation*}
    \Phi_\rrh{w}^{-1}\circ\Phi_\rr{w} = \left(\Phi_{\rr{w}_{p}}^{-1}\circ\Phi_{\rr{w}_{p-1}}\right) \circ \cdots \circ 
    \left(\Phi_{\rr{w}_{1}}^{-1}\circ\Phi_{\rr{w}_0}\right),
\end{equation*}
by which we may study the map $\Phi_\rrh{w}^{-1}\circ\Phi_\rr{w}$ by breaking it into pieces $\Phi_{\rr{w}_{k}}^{-1}\circ\Phi_{\rr{w}_{k-1}}$, $k=1,\dots, p$. 

The effect of the moves over a reduced decomposition may be described as follows. The reduced decomposition of $\rr{w}_{k-1}$ can be factorized as $\rr{w}_{k-1}=\rr{l}_k\cdot \rr{m}_k \cdot \rr{r}_k$, where $\rr{m}_k$ represents the subword that will be changed after applying the move $f_k$, while $\rr{l}_k$ and $\rr{r}_k$ denote the subwords on the left and right, respectively, that will not be changed. Applying the move $f_k$ yields the new decomposition $\rr{w}_{k}=\rr{l}_k\cdot \rr{n}_k \cdot \rr{r}_k$, where $\rr{n}_{k}$ is the new word obtained from $\rr{m}_{k}$. That is,  
\begin{equation}\label{eq:movefk}
  \rr{l}_k\cdot \rr{m}_k \cdot \rr{r}_k =\rr{w}_{k-1}  \xrightarrow{f_k}  \rr{w}_{k}=\rr{l}_k\cdot \rr{n}_k \cdot \rr{r}_k.
\end{equation}

Denote $\ell=\ell(w)$.
To define the characteristic maps, let us denote by $\psi_{\mathbf{u}}(\upla{t})$ the product of exponentials associated with a word $\rr{u}$ and a parameter $\upla{t}\in [0,\pi]^{\ell(\rr{u})}$. The parameterizations $\Phi_{\rr{w}_{k-1}}, \Phi_{\rr{w}_{k}}: [0, \pi]^\ell \to \mathcal{S}_w$ decompose as:
\begin{align*}
\Phi_{\rr{w}_{k-1}}(\upla{x}, \upla{y}, \upla{z})
    &= \psi_{\rr{l}_k}(\upla{x}) \cdot \psi_{\rr{m}_k}(\upla{y}) \cdot \psi_{\rr{r}_k}(\upla{z}) \cdot b_0 \\
\Phi_{\rr{w}_{k}}(\upla{x}', \upla{y}', \upla{z}')
    &= \psi_{\rr{l}_k}(\upla{x}') \cdot \psi_{\rr{n}_k}(\upla{y}') \cdot \psi_{\rr{r}_k}(\upla{z}') \cdot b_0.
\end{align*}

Since the prefix $\rr{l}_k$ and the suffix $\rr{r}_k$ are identical in both decompositions, the change of coordinate map $\Phi^{-1}_{\rr{w}_{k}} \circ \Phi_{\rr{w}_{k-1}}$ acts as the identity on the coordinates $\upla{x}$ and $\upla{z}$. Thus, we have $\upla{x}' = \upla{x}$ and $\upla{z}' = \upla{z}$, and the map restricts to a local change of coordinates:
\begin{equation}
\label{eq:commutativechange2}
\Phi^{-1}_{\rr{w}_{k}} \circ \Phi_{\rr{w}_{k-1}}(\upla{x}, \upla{y}, \upla{z}) = (\upla{x}, \upla{y}', \upla{z}).
\end{equation}

It remains to express $\upla{y}'$ with respect to $\upla{y}$, which depends on the change of the word $\rr{m}_{k}$ to $\rr{n}_k$ \eqref{eq:movefk}. This analysis will be performed for each type of move.

\subsection{Commutation}
Here,  $\rr{m}_k=s_is_j$ and $\rr{n}_k=s_js_i$, for $|i-j|\geq 2$. Then, $[A_{\alpha_i},A_{\alpha_j}]=0$, which gives that 
\begin{equation*}
\exp(sA_{\alpha_i})\exp(tA_{\alpha_j})=\exp(tA_{\alpha_j})\exp(sA_{\alpha_i}),
\end{equation*}
for all $s,t \in \mathbb{R}$. In particular, if we write $\upla{y}=(y_1,y_2)$ then $\upla{y}'=(y_2,y_1)$. Hence, the restriction of $\Phi^{-1}_{\rr{w}_k}\circ \Phi_{\rr{w}_{k-1}}$ to the interior of the cube $[0,\pi]^{\ell}$ is given by 
\begin{equation}\label{eq:commutativechange}
\Phi^{-1}_{\rr{w}_{k}}\circ \Phi_{\rr{w}_{k-1}}(\upla{x},\upla{y},\upla{z})= (\upla{x},y_2,y_1,\upla{z}).
\end{equation}

\subsection{Short braid}
Consider $\rr{m}_k=s_is_{i+1}s_i$ and $\rr{n}_k=s_{i+1}s_{i}s_{i+1}$, for some $i\in[n-1]$. Let us write $\upla{y}=(y_1,y_2,y_3)$ and $\upla{y}'=(y_1',y_2',y_3')$. Since $\Phi^{-1}_{\rr{w}_{k}}\circ \Phi_{\rr{w}_{k-1}}$ is a diffeomorphism, the strategy consists of evaluating all coordinates except one at the boundary value $\pi/2$. By Proposition \ref{prop:ligacaosimples}, we conclude that $y_1'=y_3, y_2'=\pi-y_2, y_3'=y_1$. Hence, $\Phi^{-1}_{\rr{w}_{k}}\circ \Phi_{\rr{w}_{k-1}}$ restricted to the interior of the cube $[0,\pi]^{\ell}$ is given by 
\begin{equation}\label{eq:shortbraid}
\Phi^{-1}_{\rr{w}_{k}}\circ \Phi_{\rr{w}_{k-1}} (\upla{x},\upla{y},\upla{z})= (\upla{x},y_3,\pi-y_2,y_1,\upla{z}).
\end{equation} 

\subsection{Long braid}

In this case, $\rr{m}_k=s_is_{i+1}s_is_{i+1}$ and $\rr{n}_k=s_{i+1}s_{i}s_{i+1}s_{i}$, for some $i\in [n-1]$. Assume that $\alpha_i$ is the short root. Let us write $\upla{y}=(y_1,y_2,y_3,y_4)$ and $\upla{y}'=(y_1',y_2',y_3',y_4')$. As before, evaluating all coordinates except one at the boundary value $\pi/2$, and by Proposition \ref{prop:ligacaodupla}, we conclude that $y_1'=y_4, y_2'=\pi-y_3, y_3'=y_2, y_4'=y_1$. Hence, $\Phi^{-1}_{\rr{w}_{k}}\circ \Phi_{\rr{w}_{k-1}}$ restricted to the interior of the cube $[0,\pi]^{\ell}$ is given by
\begin{equation}\label{eq:longbraid1}
\Phi^{-1}_{\rr{w}_{k}}\circ \Phi_{\rr{w}_{k-1}} (\upla{x},\upla{y},\upla{z}) =  (\upla{x},y_4,\pi-y_3,y_2,y_1,\upla{z}).
\end{equation}
Now, if $\alpha_i$ is the long root, the same methods and Proposition \ref{prop:ligacaodupla} give that 
\begin{equation}\label{eq:longbraid2}
\Phi^{-1}_{\rr{w}_{k}}\circ \Phi_{\rr{w}_{k-1}}(\upla{x},\upla{y},\upla{z})= (\upla{x},y_4,y_3,\pi-y_2,y_1,\upla{z}).
\end{equation}

\section{Cellular Homology}\label{sec:homology}

Let us begin by reviewing some main results about the determination of the cellular homology coefficients following \cite{RSm19}.
We start in the context of the maximal flag manifold $\mathbb{F}$. 

Let $\mathcal{C}$ be the $\mathbb{Z}$-module freely generated by the Schubert cells $\mathcal{S}_{w}$, $w\in \mathcal{W}$. The boundary map $\partial$ defined over $\mathcal{C}$ is given by $\partial \mathcal{S}_{w}=\sum_{w^{\prime}}c(w,w^{\prime })\mathcal{S}_{w^{\prime }}$, where $c(w,w^{\prime })\in \mathbb{Z}$ in such way that non-trivial coefficients may occur when $w$ covers $w'$. Furthermore, the non-trivial coefficients must be equal to $\pm 2$ (\cite{RSm19}, Thm. 2.2). 

For each $w\in \mathcal{W}$, we choose to fix a reduced decomposition
\begin{equation*}
\rr{w}=s_{1}\cdots s_{\ell}
\end{equation*}
as a product of simple reflections, with $\ell=\ell(w)$. Given $w'$ covered by $w$, there exists a unique index $I$ such that $w'= s_{1}\cdots \widehat{s_{I}}\cdots s_{\ell}$ (cf. Prop. \ref{propappendix}). We will denote this reduced decomposition of $w'$ by $\rrt{w}{I}$, which may be different from the fixed decomposition $\rr{w}'$.

The characteristic maps for $\mathcal{S}_{w'}$ are given by $\Phi_{\rr{w}'}\colon B_{\rr{w}'}\rightarrow \mathcal{S}_{w'}$ and $\Phi_{\rrt{w}{I}}\colon B_{\rrt{w}{I}}\rightarrow \mathcal{S}_{w'}$, where $B_{\rr{w}'}$ and $B_{\rrt{w}{I}}$ are balls of dimension $\ell(w')$. The first map is obtained by the choice of a reduced decomposition for $\rr{w}'$, whereas the latter follows from the deletion operation. In addition, both maps $\Phi_{\rr{w}'}$ and $\Phi_{\rrt{w}{I}}$ are diffeomorphisms when restricted to the interior of the respective balls.

Let $w,w'$ be a covering pair. It will also be useful to denote $v=s_{1} \cdots s_{I-1}$ and $u=s_{I+1}\cdots s_{\ell}$ such that $w'=v\cdot u$. There is a root (not necessarily simple) $\gamma=u^{-1}(\alpha_{I})$ such that $w = w'\cdot s_{\gamma}$.

The following results show how we determine the value of $c(w,w')$.

\begin{thm}[\cite{RSm19}, Thm. 2.8 and \cite{LR22a}, Thm. 5]\label{thm:split} Assume that $\g$ is a split real form. Let $\gamma$ be the root for which $w = w'\cdot s_{\gamma}$. Then 
\begin{equation}\label{eq:coefficient}
    c(w,w^{\prime})=(-1)^{I}\cdot\deg\left(\Phi_{\rr{w}'}^{-1}\circ \Phi_{\rrt{w}{I}} \right)\cdot(1+(-1)^{\height(\gamma^{\vee})}),
\end{equation}
where $\height(\gamma^{\vee})$ is the height of the coroot $\gamma^{\vee}=\frac{2\gamma}{\langle \gamma,\gamma\rangle}$ in the dual root system $\Pi^{*}$.
\end{thm}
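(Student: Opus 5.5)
The natural approach is the standard cellular boundary formula: $c(w,w')$ is the degree of the composite
\[
S^{\ell-1}=\partial B_{\rr{w}}\xrightarrow{\Phi_{\rr{w}}}\mathcal{S}_w\setminus N\cdot wb_0 \to \sigma_{w'} \xrightarrow{\Phi_{\rr{w}'}^{-1}} S^{\ell-1},
\]
where the middle arrow collapses everything outside the open Bruhat cell $N\cdot w'b_0$. We split this as
\[
\Phi_{\rr{w}'}^{-1}\circ\Phi_{\rr{w}}|_{\partial}=\left(\Phi_{\rr{w}'}^{-1}\circ\Phi_{\rrt{w}{I}}\right)\circ\left(\Phi_{\rrt{w}{I}}^{-1}\circ\Phi_{\rr{w}}|_{\partial}\right).
\]
Degrees multiply, so the factor $\deg(\Phi_{\rr{w}'}^{-1}\circ\Phi_{\rrt{w}{I}})$ in \eqref{eq:coefficient} splits off immediately. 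What remains is to show that the second map has degree $(-1)^I(1+(-1)^{\height(\gamma^\vee)})$. This is the computation of \cite{RSm19} in the fixed-decomposition setting.

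For this second map, use the cube parametrization $[0,\pi]^\ell$ and Proposition \ref{characteristic_map}. The boundary points sent into the open cell $N\cdot w'b_0$ are exactly those on the two faces $t_I=0$ and $t_I=\pi$, with all other coordinates interior. Any other face either deletes a different letter, which gives a non-reduced word or a different $w''$ by the uniqueness remark after Proposition \ref{propappendix}, or lands in lower-dimensional cells. So the collapsed map is a sum of two local contributions, one for each face.

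\emph{Face $t_I=0$.} Here $\psi_I(0)=1$, so $\Phi_{\rr{w}}$ restricts to exactly $\Phi_{\rrt{w}{I}}$ on the remaining coordinates. The local degree is the orientation sign of this face inside $\partial[0,\pi]^\ell$ with the induced boundary orientation, namely $(-1)^{I}$ up to a global convention.

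\emph{Face $t_I=\pi$.} Here $\psi_I(\pi)=\exp(\pi A_{\alpha_I})\in M$. Move this element to the right through $\psi_{I+1}(t_{I+1})\cdots\psi_\ell(t_\ell)$, using $\exp(\pi A_\alpha)\exp(tA_\beta)\exp(\pi A_\alpha)^{-1}=\exp(t\,\Ad(\exp \pi A_\alpha)A_\beta)$. Each letter is then replaced by $\exp(\pm t_jA_{\alpha_j})$, with sign $(-1)^{\langle\alpha_j,\alpha_I^\vee\rangle}$ given by the eigenvalue computation in Section \ref{sec:liebrackets}. Once moved to the right end, the element of $M$ fixes $b_0$. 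Each sign flip $t_j\mapsto -t_j$ is realized, after the identification $t\sim t+\pi$ modulo $M$, by the reflection $t_j\mapsto\pi-t_j$, which reverses orientation.

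The total parity of the flips is governed by how $m=\exp(\pi A_{\alpha_I})$ acts along $u$. Conjugating successively, $u^{-1}mu$ acts on $\mathfrak g_\beta$ by $(-1)^{\langle\beta,\gamma^\vee\rangle}$ with $\gamma=u^{-1}\alpha_I$. Expanding $\gamma^\vee$ in simple coroots, the count reduces to the parity $\height(\gamma^\vee)+1$. The second face therefore contributes $(-1)^{I}\cdot(-1)^{\height(\gamma^\vee)}$, where the extra $-1$ from the opposite face orientation cancels against one flip. Summing the two faces gives $(-1)^I(1+(-1)^{\height(\gamma^\vee)})$.

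The main obstacle is the second face: making the bookkeeping of sign flips rigorous and identifying their parity with $\height(\gamma^\vee)$. This requires the complexified eigenvalue argument from Section \ref{sec:liebrackets} to get $(-1)^{\langle\beta,\alpha^\vee\rangle}$ without ambiguity, and a careful induction along the suffix $u$. I would follow \cite{RSm19}, Thm. 2.8 and \cite{LR22a}, Thm. 5 for these steps.
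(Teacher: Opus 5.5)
The paper does not prove this statement; it quotes it from \cite{RSm19} and \cite{LR22a}. Your skeleton is the one behind those results and is sound:
\begin{itemize}
\item split off the factor $\deg(\Phi_{\rr{w}'}^{-1}\circ\Phi_{\rrt{w}{I}})$;
\item observe that only the open faces $t_I=0$ and $t_I=\pi$ reach $N\cdot w'b_0$;
\item read off $(-1)^I$ from the face $t_I=0$.
\end{itemize}
The gap is in the face $t_I=\pi$, which is the actual content of the theorem. The mechanism you describe for it gives wrong answers.

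After pushing $m=\exp(\pi A_{\alpha_I})$ to the right, you rewrite $\exp(-t_jA_{\alpha_j})=\exp((\pi-t_j)A_{\alpha_j})\exp(-\pi A_{\alpha_j})$. This releases a new element $\exp(-\pi A_{\alpha_j})\in M$, which must itself be pushed through all later letters and flips some of them again. The flips therefore cascade, and their number is not the number of $j>I$ with $\langle\alpha_j,\alpha_I^\vee\rangle$ odd.

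Take $A_2$ with $\rr{w}=s_1s_2s_1$ and $I=1$. Your rule flips only $t_2$, predicting local degree $-1$ and hence $c(w,w')=\pm2$. In fact $\exp(-\pi A_{\alpha_2})$ also flips $t_3$, because $\langle\alpha_1,\alpha_2^\vee\rangle=-1$. The face map is then $\Phi_{s_2s_1}(\pi-t_2,\pi-t_3)$, of degree $+1$, so $c(w,w')=0$. This is forced: here $\gamma=\alpha_2$, and the maximal flag of $\mathfrak{sl}(3,\R)$ is orientable. Your closing remark about $u^{-1}mu$ and expanding $\gamma^\vee$ never says which roots are being counted. So the parity $\height(\gamma^\vee)+1$ is asserted rather than derived.

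A clean way to close the step avoids tracking flips altogether. Since $m\in M$ normalizes $N$ and $u^{-1}mu\in M$ fixes $b_0$, $m$ preserves the open cell $N\cdot ub_0$. Hence on the face $t_I=\pi$,
\[
\Phi_{\rr{w}}=\Phi_{\rrt{w}{I}}\circ(\mathrm{id}\times h),\qquad h=\Phi_{\rr{u}}^{-1}\circ m\circ\Phi_{\rr{u}}.
\]
Use the coordinates $X\mapsto \exp(X)\,ub_0$ on $\mathfrak{n}_u=\sum_{\beta\in\Pi^+\cap u\Pi^-}\mathfrak{g}_\beta$. In them $m$ acts linearly by $\Ad(m)$, which is the scalar $(-1)^{\langle\beta,\alpha_I^\vee\rangle}$ on $\mathfrak{g}_\beta$. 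So $\deg h=(-1)^{\nu}$, where $\nu=\sum_{\beta\in\Pi^+\cap u\Pi^-}\langle\beta,\alpha_I^\vee\rangle$.

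Let $\rho$ be the half-sum of positive roots. Since $\sum_{\beta\in\Pi^+\cap u\Pi^-}\beta=\rho-u\rho$, $\langle\rho,\alpha_I^\vee\rangle=1$, and $\langle\rho,\gamma^\vee\rangle=\height(\gamma^\vee)$ for $\gamma=u^{-1}\alpha_I>0$, we get
\[
\nu=\langle\rho-u\rho,\alpha_I^\vee\rangle=1-\langle\rho,u^{-1}\alpha_I^\vee\rangle=1-\height(\gamma^\vee).
\]
This is exactly the value $\deg h=(-1)^{\height(\gamma^\vee)+1}$ that your final bookkeeping requires. With it, the two opposite faces sum to $(-1)^I(1+(-1)^{\height(\gamma^\vee)})$.
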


Let $\Theta \subset \Sigma$ and consider the partial flag manifold $\mathbb{F}_{\Theta}$. Recall that the Schubert cells  $\mathcal{S}_{w}^{\Theta}$ are the closure of the Bruhat cells $N\cdot wb_{\Theta}$, for $w\in \mathcal{W}^{\Theta }$.
Let $\mathcal{C}^{\Theta}$ be the $\mathbb{Z}$-module freely generated by $\mathcal{S}_{w}^{\Theta}$, for every element $w$ of $\weyl^{\Theta}$. The boundary maps $\partial^{\Theta} :\mathcal{C}^{\Theta} \rightarrow \mathcal{C}^{\Theta}$ are defined by 
\begin{equation*}
\partial^{\Theta} \mathcal{S}_{w}^{\Theta}=\sum_{w'\in \weyl^{\Theta}}c^{\Theta}(w,w')\mathcal{S}_{w'}^{\Theta}
\end{equation*}
for some coefficients $c^{\Theta}(w,w')\in \mathbb{Z}$. 

\begin{thm}[\cite{RSm19}, Thm. 3.4]\label{teo:mainRS2}
The integral homology of the flag manifold $\mathbb{F}_{\Theta}=G/P_{\Theta}$ is isomorphic to the homology of $(\mathcal{C}^{\Theta},\partial^{\Theta})$, where $\partial^{\Theta}$ is obtained by restricting $\partial$ and projecting it onto $\mathcal{C}^{\Theta}$.
\end{thm}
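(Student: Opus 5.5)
The strategy is to realize $\mathbb{F}_\Theta$ through the fibration $\pi_\Theta\colon\mathbb{F}\to\mathbb{F}_\Theta$ and to compare the cell structures upstairs and downstairs. Each $w\in\weyl$ factors uniquely as $w=w^\Theta w_\Theta$ with $w^\Theta\in\weyl^\Theta$, $w_\Theta\in\weyl_\Theta$ and $\ell(w)=\ell(w^\Theta)+\ell(w_\Theta)$. Since $w_\Theta P_\Theta=P_\Theta$, we have $\pi_\Theta(N\cdot wb_0)=N\cdot w^\Theta b_\Theta$. I will fix reduced decompositions of the elements of $\weyl^\Theta$. Then, for $w\in\weyl^\Theta$, the image of the characteristic map $\Phi_w=\psi_1\cdots\psi_\ell\cdot b_0$ under $\pi_\Theta$ is the map $\psi_1\cdots\psi_\ell\cdot b_\Theta$. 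The first step is to show that this image is a characteristic map for $\mathcal{S}^\Theta_w$. This is the analogue of Proposition \ref{characteristic_map} with $b_0$ replaced by $b_\Theta$. I will argue it using the fact that $N\cdot wb_0\to N\cdot wb_\Theta$ is a diffeomorphism for minimal representatives, since both cells have dimension $\ell(w)$. It follows that the Schubert cells $\mathcal{S}^\Theta_w$, $w\in\weyl^\Theta$, give a CW structure on $\mathbb{F}_\Theta$, and that its cellular chain complex is $(\mathcal{C}^\Theta,\partial^\Theta)$ for some boundary operator $\partial^\Theta$.

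The second step identifies $\partial^\Theta$ with the restriction and projection of $\partial$. For a covering pair $w,w'$ with $w\in\weyl^\Theta$, the coefficient $c^\Theta(w,w')$ is the degree of the map
\begin{equation*}
S^{d-1}=\partial B_w\to\mathbb{F}_\Theta^{(d-1)}\to\sigma^\Theta_{w'},
\end{equation*}
where $\sigma^\Theta_{w'}$ denotes the $(d-1)$-cell of $w'$ with the complement of $N\cdot w'b_\Theta$ collapsed to a point. Write $\Phi^\Theta_w=\pi_\Theta\circ\Phi_w$. Then this map factors as the upstairs attaching map followed by the map $\sigma_{w'}\to\sigma^\Theta_{w'}$ induced by $\pi_\Theta$.

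The third step treats the two possibilities for $w'$. First suppose $w'\in\weyl^\Theta$. Then $\pi_\Theta$ maps $N\cdot w'b_0$ diffeomorphically onto $N\cdot w'b_\Theta$, and it is compatible with the characteristic maps once the same reduced word is used. The induced map $\sigma_{w'}\to\sigma^\Theta_{w'}$ therefore has degree $1$, so $c^\Theta(w,w')=c(w,w')$. Now suppose $w'\notin\weyl^\Theta$. Then $\pi_\Theta(N\cdot w'b_0)$ is a Bruhat cell of dimension strictly smaller than $\ell(w')$, so it lies in the $(d-2)$-skeleton, and no $(d-1)$-cell of $\mathbb{F}_\Theta$ receives a contribution from it. Assembling both cases gives $\partial^\Theta=p\circ\partial|_{\mathcal{C}^\Theta}$, where $p$ is the projection onto $\mathcal{C}^\Theta$. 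This is the formula claimed, and the cellular homology theorem then yields the isomorphism with $H_*(\mathbb{F}_\Theta,\Z)$.

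I expect the main obstacle to be the first step, together with the degree-one claim in the third step. There one must check that the boundary of $B_w$ really maps into the lower skeleton of $\mathbb{F}_\Theta$, and that it is not merely contained in some larger closed set. For this I would use the fact that the Bruhat order on $\weyl^\Theta$ is induced from the order on $\weyl$. I would also need to handle orientations carefully, so that the local diffeomorphism $N\cdot w'b_0\cong N\cdot w'b_\Theta$ is orientation-compatible with the two parameterizations. Because both parameterizations come from the same product of $\psi_i$, this compatibility should be tautological.
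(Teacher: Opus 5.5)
Your argument is correct and is essentially the argument behind the cited result (the paper itself only quotes \cite{RSm19}, Thm.~3.4, without proof). For $w\in\weyl^\Theta$ the maps $\pi_\Theta\circ\Phi_w$ serve as characteristic maps and $\pi_\Theta$ is cellular. The induced chain map $\mathcal{C}\to\mathcal{C}^\Theta$ sends $\schub_w$ to $\schub^\Theta_w$ with degree one when $w\in\weyl^\Theta$ (using the same reduced words) and to $0$ otherwise, which is exactly $\partial^\Theta=p\circ\partial|_{\mathcal{C}^\Theta}$.

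The two points you flag are routine. First, $\pi_\Theta|_{N\cdot wb_0}$ is an $N$-equivariant surjection between cells of equal dimension, hence a covering of a simply connected space and thus a diffeomorphism. Second, your factorization through $\sigma_{w'}$ is legitimate because the only cell of the $(d-1)$-skeleton of $\mathbb{F}$ meeting $\pi_\Theta^{-1}(N\cdot w'b_\Theta)$ is $N\cdot w'b_0$, since the other elements of $w'\weyl_\Theta$ are strictly longer.
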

Hence,
\begin{equation*}
c^{\Theta }(w,w')=c(w,w'),
\end{equation*}
where $w,w'\in\weyl^{\Theta}$. That is, it suffices to know the boundary operator on the maximal flag in order to describe it on the partial flag.

As shown in Equation \eqref{eq:coefficient}, the main difficulty in computing the coefficients lies in determining the degree part. This problem has already been specifically solved for type A (cf. \cite{LR26}). We will now present an approach that works effectively for all types independently.

\subsection{Degree computation}\label{subsec:degreecomp}

Suppose that $\rr{w}$ and $\rrh{w}$ are reduced decompositions of $w$. When both reduced decompositions $\rr{w}$ and $\rrh{w}$ are equal, then $\deg\left(\Phi_{\rrh{w}}^{-1}\circ \Phi_{\rr{w}} \right)=1$. More generally, the degree of the composition $\Phi_{\rr{w}}^{-1}\circ \Phi_{\rrh{w}}$ is the Jacobian determinant of the change of coordinate map, i.e.,
\begin{equation*}
    \deg\left(\Phi_{\rrh{w}}^{-1}\circ \Phi_{\rr{w}} \right) = \det\left(J(\Phi_{\rrh{w}}^{-1}\circ \Phi_{\rr{w}}) \right).
\end{equation*}

This degree is well-defined and can be computed for any pair of reduced decompositions of a given $w\in\weyl$.

\begin{prop}\label{prop:degree1}
Suppose that the reduced decompositions $\rr{w}$ and $\rrh{w}$ of $w$ are connected by a sequence of moves $f_1 f_2\cdots f_p$. Then,
\begin{equation*}
    \deg\left(\Phi_{\rrh{w}}^{-1}\circ \Phi_\rr{w} \right) = (-1)^{\#\{k\colon f_k \mbox{ is a commutation}\}} \cdot (-1)^{\#\{k\colon f_k \mbox{ is a long braid}\}}.
\end{equation*}
\end{prop}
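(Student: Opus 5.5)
The plan is to use the factorization of the change of coordinate map from Section \ref{sec:changeofcoordinate},
\begin{equation*}
\Phi_\rrh{w}^{-1}\circ\Phi_\rr{w} = \left(\Phi_{\rr{w}_{p}}^{-1}\circ\Phi_{\rr{w}_{p-1}}\right) \circ \cdots \circ \left(\Phi_{\rr{w}_{1}}^{-1}\circ\Phi_{\rr{w}_0}\right),
\end{equation*}
and multiplicativity of degree under composition. Each factor is a homeomorphism of spheres obtained by collapsing the boundary of $[0,\pi]^\ell$, so each degree is $\pm1$. By the Jacobian description of the degree, it suffices to compute the sign of the (constant) Jacobian determinant of each elementary piece $\Phi_{\rr{w}_k}^{-1}\circ\Phi_{\rr{w}_{k-1}}$ on the interior of the cube, and then multiply.

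By \eqref{eq:commutativechange2}, each elementary map is the identity on the $\upla{x}$ and $\upla{z}$ blocks. Its Jacobian is therefore block diagonal, and its determinant equals the determinant of the middle map $\upla{y}\mapsto\upla{y}'$. I then treat the three kinds of move separately.

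\emph{Commutation.} By \eqref{eq:commutativechange}, the middle map is $(y_1,y_2)\mapsto(y_2,y_1)$, a transposition, with determinant $-1$.

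\emph{Short braid.} By \eqref{eq:shortbraid}, the middle map is $(y_1,y_2,y_3)\mapsto(y_3,\pi-y_2,y_1)$. Its linear part is the transposition $(1\,3)$, contributing $-1$, composed with the reflection $y_2\mapsto -y_2$, contributing another $-1$. The determinant is $+1$.

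\emph{Long braid.} By \eqref{eq:longbraid1} or \eqref{eq:longbraid2}, depending on which root is short, the middle map is a permutation of four coordinates with exactly one coordinate negated and translated by $\pi$. In \eqref{eq:longbraid1} it is $(y_1,y_2,y_3,y_4)\mapsto(y_4,\pi-y_3,y_2,y_1)$. The underlying permutation is a product of a transposition and a $3$-cycle, hence odd. Together with the single sign flip, the determinant is $+1$.

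This last conclusion would contradict the claimed contribution $-1$ for a long braid, so the long braid case is the point to scrutinize. I must recompute the permutation carefully. For \eqref{eq:longbraid1}, $y'=(y_4,-y_3,y_2,y_1)$ up to translation, so the matrix sends $e_4\to e_1$, $e_3\to -e_2$, $e_2\to e_3$, $e_1\to e_4$. The permutation $1\leftrightarrow4$, $2\leftrightarrow3$ is a product of two transpositions, hence even. The one negative sign then gives determinant $-1$. The case \eqref{eq:longbraid2} is the same permutation with the negation in a different slot, and again gives $-1$. For the short braid, the permutation $(1\,3)$ is odd and there is one negation, giving $+1$, as before.

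Multiplying over all moves, each commutation contributes $-1$, each short braid $+1$, and each long braid $-1$, which gives the formula. The main obstacle I expect is not this bookkeeping but justifying that the Jacobian determinant really equals the degree of the induced map on spheres. The formulas \eqref{eq:commutativechange}--\eqref{eq:longbraid2} are affine diffeomorphisms of the open cube that extend to the closed cube and map its boundary to its boundary. A degree-$\pm1$ homeomorphism of $S^\ell$ obtained from an affine map has degree equal to the sign of its linear part, computed at any interior regular point. I would cite this fact rather than reprove it.
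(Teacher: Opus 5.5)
Your proposal is correct and matches the paper's proof: factor the change of coordinates into elementary moves, use multiplicativity of degree and the block-diagonal Jacobian, and read off determinants $-1$, $+1$, $-1$ for a commutation, a short braid and a long braid respectively. Delete your first long-braid computation: the underlying permutation is $(1\,4)(2\,3)$, which is even, so the single negated coordinate gives determinant $-1$, as your recomputation already shows.
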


\begin{proof}
Write the sequence of moves connecting the decompositions as
\begin{equation*}
    \rr{w} = \rr{w}_0 \xrightarrow{f_1} \rr{w}_1 \xrightarrow{f_2} \cdots \xrightarrow{f_p} \rr{w}_p=\rrh{w}.
\end{equation*}
By the chain rule, the total degree is the product of the degrees of each move:
\begin{equation*}
    \deg(\Phi_\rrh{w}^{-1}\circ\Phi_\rr{w}) = \prod_{k=1}^{p} \deg\left(\Phi_{\rr{w}_{k}}^{-1}\circ\Phi_{\rr{w}_{k-1}}\right).
\end{equation*}
It suffices to compute the degree for each type of move applied. The Jacobian matrix of the transition $\Phi_{\rr{w}_{k}}^{-1}\circ\Phi_{\rr{w}_{k-1}}$ acts non-trivially only on the coordinates corresponding to the reflections involved in the move. Thus, the determinant is simply the determinant of the block corresponding to the move. 

So, let us compute the determinant of the Jacobian matrix for each type of move:
\begin{enumerate}
\item When $f_k$ is a commutation, according to Equation \eqref{eq:commutativechange},  

\begin{equation*}
\deg\left(\Phi^{-1}_{\rr{w}_k}\circ \Phi_{\rr{w}_{k-1}}\right)= \det \begin{pmatrix}
\mathbf{1}& & & \\   & 0& 1& \\    &1&0 & \\    & & &\mathbf{1}
\end{pmatrix}=-1.
\end{equation*}

\item When $f_k$ is a short braid, according to Equation \eqref{eq:shortbraid},
\begin{equation*}
\deg\left(\Phi^{-1}_{\rr{w}_k}\circ \Phi_{\rr{w}_{k-1}}\right)= \det \begin{pmatrix}
\mathbf{1}& & & &\\   & 0& 0& 1& \\    & 0&-1&0 & \\   & 1&0&0& \\    & & & &\mathbf{1}
\end{pmatrix}=1.
\end{equation*}

\item When $f_k$ is a long braid and the first root is short, according to Equation \eqref{eq:longbraid1}, 
\begin{equation*}
\deg\left(\Phi^{-1}_{\rr{w}_k}\circ \Phi_{\rr{w}_{k-1}}\right)= \det \begin{pmatrix}
\mathbf{1}& & & & &\\   &0& 0& 0& 1& \\    & 0&0&1 &0 & \\   & 0&-1&0&0& \\   & 1&0&0&0& \\     & & & & &\mathbf{1}
\end{pmatrix}=-1.
\end{equation*}

The case where the first root is long (Equation \eqref{eq:longbraid2}) yields the same determinant.
\end{enumerate}

Therefore, commutations and long braid moves interfere in the sign, whereas short braid moves do not.
\end{proof}

\begin{rem}
    Proposition \ref{prop:degree1} generalizes Lemmas 3.8 and 3.9 of \cite{LR26}.
\end{rem}

In this proof, we see that the calculation can be carried out by considering each movement individually. The next step is to observe the displacement of a simple and unique reflection after a move. The following lemma will help us to get this result. 

\begin{lem}\label{lem:degree}
    Let $u,w,v\in\weyl$. If $\rr{u}$ and $\rr{v}$ are a reduced decomposition of $u$ and $v$, and $\rr{w}$, $\rrh{w}$ are two reduced decompositions of $w$, then
\begin{equation*}
    \deg\left(\Phi_{\rr{u}\rr{w}\rr v}^{-1}\circ \Phi_{\rr{u}\rrh{w}\rr{v}} \right) = \deg\left(\Phi_{\rr{w}}^{-1}\circ \Phi_{\rrh{w}} \right).
\end{equation*}
\end{lem}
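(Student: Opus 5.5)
We use the move-by-move description of the change of coordinate map from Section~\ref{sec:changeofcoordinate}, together with the chain-rule factorization in the proof of Proposition~\ref{prop:degree1}. First we record an implicit hypothesis: $\rr{u}\rr{w}\rr{v}$ and $\rr{u}\rrh{w}\rr{v}$ must be reduced decompositions of the same element $uwv$, as the statement tacitly requires. Then $\ell(uwv)=\ell(u)+\ell(w)+\ell(v)$, and both characteristic maps are defined on the same cube $[0,\pi]^{\ell(uwv)}$.

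Since $\rr{w}$ and $\rrh{w}$ are two reduced decompositions of $w$, Matsumoto's theorem (the statement quoted at the start of Section~\ref{sec:changeofcoordinate}) gives a sequence of moves $f_1,\dots,f_p$. These moves are commutations, short braids and long braids, and they transform $\rrh{w}=\rr{w}_0\to\rr{w}_1\to\cdots\to\rr{w}_p=\rr{w}$. Each move acts on a subword $\rr{m}_k$ of $\rr{w}_{k-1}=\rr{l}_k\rr{m}_k\rr{r}_k$.

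Concatenating with $\rr{u}$ on the left and $\rr{v}$ on the right gives a sequence of reduced words $\rr{u}\rr{w}_k\rr{v}$. Each word in this sequence is obtained from the previous one by the same move, now applied to the factorization $(\rr{u}\rr{l}_k)\,\rr{m}_k\,(\rr{r}_k\rr{v})$. By Equation~\eqref{eq:commutativechange2} and the explicit formulas \eqref{eq:commutativechange}, \eqref{eq:shortbraid}, \eqref{eq:longbraid1}, \eqref{eq:longbraid2}, the change of coordinate map $\Phi^{-1}_{\rr{u}\rr{w}_k\rr{v}}\circ\Phi_{\rr{u}\rr{w}_{k-1}\rr{v}}$ is the identity on the prefix and suffix coordinates. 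On the block of $\rr{m}_k$ it is the same local map as $\Phi^{-1}_{\rr{w}_k}\circ\Phi_{\rr{w}_{k-1}}$. Hence its Jacobian is block diagonal, with the identity outside the same block, and the two Jacobian determinants coincide. Each is $-1$ for commutations and long braids and $+1$ for short braids, as computed in the proof of Proposition~\ref{prop:degree1}.

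Finally, we take the product over $k$ by the chain rule, exactly as in Proposition~\ref{prop:degree1}. Both $\deg(\Phi_{\rr{u}\rr{w}\rr{v}}^{-1}\circ\Phi_{\rr{u}\rrh{w}\rr{v}})$ and $\deg(\Phi_{\rr{w}}^{-1}\circ\Phi_{\rrh{w}})$ equal
\[
(-1)^{\#\{\text{commutations}\}+\#\{\text{long braids}\}}
\]
for the same sequence of moves. One could also argue directly: $\Phi_{\rr{u}\rr{w}\rr{v}}(\upla{x},\upla{y},\upla{z})=\psi_{\rr u}(\upla x)\psi_{\rr w}(\upla y)\psi_{\rr v}(\upla z)b_0$. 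Since the identity $\psi_{\rr w}(\upla y)=\psi_{\rrh w}(\upla y')\cdot m$, for some $m\in M$, is a group-level identity, it persists after multiplying on either side.

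The main obstacle is checking that the local formulas of Section~\ref{sec:changeofcoordinate} really hold inside a longer word. They were derived from the group identities of Propositions~\ref{prop:ligacaosimples} and~\ref{prop:ligacaodupla}, which are equalities in $K$ that may only hold up to elements of $M$ (for instance the factors $s_\alpha s_\alpha$). One must make sure that such an $M$-factor on the right of the block can be moved past $\psi_{\rr{r}_k\rr{v}}(\upla z)$ without changing the coordinates $\upla z$. Here we can use that $M$ normalizes each $\mathfrak{g}_{\alpha}$: conjugating $\exp(tA_\alpha)$ by an element of $M$ gives $\exp(\pm tA_\alpha)$. Since the degree is computed on the interior of the cube and an orientation reversal would be visible globally, we expect it suffices to argue as the paper does in Section~\ref{sec:changeofcoordinate}, namely that the change of coordinates is governed by Equation~\eqref{eq:commutativechange2} regardless of the surrounding letters.
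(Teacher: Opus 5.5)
Your proposal is correct and follows the paper's proof: the moves relating $\rr{w}$ and $\rrh{w}$ induce moves on the words $\rr{u}\rr{w}_k\rr{v}$, and by \eqref{eq:commutativechange2} each transition fixes the prefix and suffix coordinates, so its Jacobian is block diagonal with the same local block as for $\rr{w}$ alone, and the chain rule finishes the argument. The concern in your last paragraph (and the caveat your ``direct argument'' needs) does not arise, for two reasons: the identities of Propositions \ref{prop:ligacaosimples} and \ref{prop:ligacaodupla} are exact equalities in $K$, and any factor $m(\upla{y})\in M$ in $\psi_{\rr{m}}(\upla{y})=\psi_{\rr{n}}(\upla{y}')\,m(\upla{y})$ depends continuously on $\upla{y}$ while $M$ is finite for split forms, so it is constant and equal to $1$ by the exact braid identity at the center of the cube.
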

\begin{proof}
    There exists a sequence of moves that transform $\rr{w}$ into $\rrh{w}$:
    $$\rr{w}= \rr{w}_0 \xrightarrow{f_1} \rr{w}_1 \xrightarrow{f_2} \cdots \xrightarrow{f_p} \rr{w}_p = \rrh{w}.$$
    
    This sequence induces a corresponding sequence of moves on the full decompositions, transforming $\rr{u}\rr{w}\rr{v}$ into $\rr{u}\rrh{w}\rr{v}$:
    $$ \rr{u}\rr{w}_0\rr{v} \xrightarrow{f_1'} \rr{u}\rr{w}_1\rr{v} \xrightarrow{f_2'} \cdots \xrightarrow{f_p'} \rr{u}\rr{w}_p\rr{v}. $$
    
    Since the prefix $\rr{u}$ and the suffix $\rr{v}$ remain fixed, each move $f_k'$ acts exclusively on the indices $\ell(u)<i\leqslant\ell(u)+\ell(w)$.
    
    Consider the change of coordinate map $\Phi_{\rr{u}\rr{w}_{k}\rr{v}}^{-1} \circ \Phi_{\rr{u}\rr{w}_{k-1}\rr{v}}$ associated with the $k$-th move. Let $(\upla{x}, \upla{y},\upla{z})$ be the local coordinates, where $\upla{x} \in \mathbb{R}^{\ell(u)}$ corresponds to the prefix $\rr{u}$, $\upla{y} \in \mathbb{R}^{\ell(w)}$ corresponds to the element $\rr{w_k}$, and $\upla{z} \in \mathbb{R}^{\ell(v)}$ corresponds to the suffix $\rr{v}$. This map acts as the identity on $\upla{x}$ and $\upla{z}$ and applies the transformation of the move $f_k$ on $\upla{y}$.
    Thus, the Jacobian matrix has a block-diagonal structure:
    $$ J(\Phi_{\rr{u}\rr{w}_{k}\rr{v}}^{-1} \circ \Phi_{\rr{u}\rr{w}_{k-1}\rr{v}}) = \begin{pmatrix}
    \mathbf{1} &  &  \\
     & J(\Phi_{\rr{w}_{k}}^{-1} \circ \Phi_{\rr{w}_{k-1}}) & \\
     &  & \mathbf{1}\end{pmatrix}, $$
    where $J(\Phi_{\rr{w}_{k}}^{-1} \circ \Phi_{\rr{w}_{k-1}})$ is the Jacobian block associated with the move $f_k$.
    
    Hence, the degree is given by the product of the determinants.
\end{proof}

\begin{prop}\label{prop:degree}
    Let $w\in \weyl$ and suppose that it can be decomposed as $w=s_\alpha\cdot u=u\cdot s_\beta$ for some $u\in\weyl$ with $u<w$, and $\alpha,\beta\in\Sigma$. If $\rr{u}$ is a fixed reduced decomposition of $u$, then
    \begin{equation*}
    \deg\left(\Phi_{s_\alpha\rr{u}}^{-1}\circ \Phi_{\rr{u}s_\beta} \right) = (-1)^{\ell(u)}.
    \end{equation*}
\end{prop}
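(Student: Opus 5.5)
We argue by induction on $\ell(u)$, using Proposition \ref{prop:degree1} and Lemma \ref{lem:degree}. Since both $s_\alpha\rr{u}$ and $\rr{u}s_\beta$ are reduced decompositions of $w$, they are connected by a sequence of moves. The degree depends only on the two words, because it is the Jacobian determinant of a fixed diffeomorphism. We may therefore choose any convenient sequence of moves. The plan is to transport the letter $s_\alpha$ from the left end to the right end, where it becomes $s_\beta$, and to count the moves that contribute a sign.

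The base case is $\ell(u)=0$. Then $s_\alpha=s_\beta$, the two words coincide, and the degree is $1$. For the inductive step, write $\rr{u}=s_\gamma\rr{u}_1$ with $s_\gamma$ the first letter. There are two cases. If $s_\alpha s_\gamma=s_\gamma s_\alpha$ (commuting, non-adjacent nodes), we first apply one commutation $s_\alpha s_\gamma\to s_\gamma s_\alpha$. By Proposition \ref{prop:degree1} this contributes $-1$. The remaining word is $s_\gamma\cdot(s_\alpha\rr{u}_1)$, and $s_\alpha u_1=u_1 s_\beta$ with $\ell(u_1)=\ell(u)-1$. Lemma \ref{lem:degree} applied with prefix $s_\gamma$ and the induction hypothesis then give the total $(-1)\cdot(-1)^{\ell(u)-1}=(-1)^{\ell(u)}$. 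This requires that $u_1$ satisfy the hypothesis, which holds because $s_\gamma$ commutes with $s_\alpha$, so $s_\alpha u_1 = s_\gamma s_\alpha u = s_\gamma w s_\beta\cdot s_\beta$, and hence $s_\alpha u_1 = u_1 s_\beta$ with lengths matching.

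If $\alpha$ and $\gamma$ are adjacent, we cannot commute. Here we use the exchange/deletion structure. Since $s_\alpha u=us_\beta$ with $s_\alpha u$ reduced, $u^{-1}(\alpha)=\beta$ is a simple root. We choose the reduced word of $u$ adapted to a braid: $w$ has left descents $\alpha$ and $\gamma$, so $w$ has a reduced word beginning with the longest element of the rank-two parabolic $\langle s_\alpha,s_\gamma\rangle$. Passing to such a word uses a braid (short or long), and then we recurse on the remaining suffix. In counting, a short braid of length $3$ moves $s_\alpha$ past two letters with sign $+1=(-1)^2$. A long braid of length $4$ moves it past three letters with sign $-1=(-1)^3$. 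So in every case the sign equals $(-1)^{\#\text{letters passed}}$, matching the commutation case, where one letter is passed with sign $-1$.

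This suggests the cleaner invariant to prove. Each elementary move that carries the distinguished letter rightwards across $k$ letters contributes $(-1)^k$. The rearrangements of $\rr{u}$ itself, used to bring a braid configuration to the front and then undo it, cancel in pairs, because we apply the same sequence and its reverse to $\rr{u}$ (via Lemma \ref{lem:degree}, the total degree of a move sequence applied then undone is $1$). The distinguished letter crosses exactly $\ell(u)$ letters in total, which gives $(-1)^{\ell(u)}$.

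The main obstacle is making rigorous the statement that the rearrangements of $\rr{u}$ cancel. Concretely, one must show that one may first rewrite $\rr{u}$ to a word $\rr{u}'$ in which $s_\alpha$ passes letter by letter, then move $s_\alpha$ through $\rr{u}'$, and then rewrite $\rr{u}'$ back to $\rr{u}$ on the left part. The first and last steps have inverse degrees $\deg(\Phi^{-1}_{\rr u'}\circ\Phi_{\rr u})^{\pm1}$ by Lemma \ref{lem:degree}, since the prefix $s_\alpha$ and the suffix $s_\beta$ are fixed. For the middle step I would try first the standard fact that $s_\alpha$ can be pushed through some reduced word via commutations and terminal braids, which is the usual proof of the exchange property.
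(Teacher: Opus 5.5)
Your argument is correct and is essentially the paper's proof. Both use induction on $\ell(u)$. A single move at the front of the word carries $s_\alpha$ past a block $\rr{v}_0$ with $\ell(\rr{v}_0)\in\{1,2,3\}$ and contributes $(-1)^{\ell(\rr{v}_0)}$. Lemma~\ref{lem:degree} is then used twice: to cancel the rewriting of $\rr{u}$ (the factor $\deg(\Phi_{\rr{u}}^{-1}\circ\Phi_{\rrh{u}})$ against its inverse), and to strip the frozen prefix before applying the induction hypothesis to the shorter suffix.

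The only difference is how the intermediate word $\rrh{u}$ is obtained. The paper reads it off an arbitrary chain of moves, taking the moves that precede the first one touching the leading letter. You construct it explicitly from $\{s_\alpha,s_\gamma\}\subseteq D_L(w)$ via the longest element of the rank-two parabolic.

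The global letter-by-letter push you flag as the ``main obstacle'' in your last paragraph is not needed. The induction only ever requires that one front move, which your second case already supplies.
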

\begin{proof}
We proceed by induction on $\ell(u)$. The base case $\ell(u)=0$ is trivial (the degree is $1$). Assume the theorem holds for all $u' \in \weyl$ with $\ell(u') < \ell(u)$.

Consider a sequence of moves $f_1 \cdots f_p$ that transforms the reduced decomposition $s_{\alpha} \rr{u}$ into $\rr{u}s_{\beta}$:
\begin{equation*}
    s_\alpha\rr{u} = \rr{w}_0 \xrightarrow{f_1} \rr{w}_1 \xrightarrow{f_2} \cdots \xrightarrow{f_p} \rr{w}_p=\rr{u}s_\beta.
\end{equation*}

Since the first simple reflection $s_\alpha$ in $\rr{w}_0$ must eventually move to the right to become $s_\beta$, there exists a minimal index $i\in [p]$ such that the move $f_i$ involves the first letter of the word $\rr{w}_{i-1}$.

The moves $f_1, \dots, f_{i-1}$ act only on the subword $\rr{u}$, transforming it into some reduced decomposition $\rrh{u}$. Thus, $\rr{w}_{i-1} = s_\alpha\rrh{u}$.
We can decompose $\rrh{u} = \rr{v}_0 \rr{v}_1$ such that the move $f_i$ acts on the prefix $s_\alpha\rr{v}_0$. Specifically, $f_i$ transforms $s_\alpha\rr{v}_0\rr{v}_1$ into $\rr{v}_0 \tilde{s} \rr{v}_1$ (where $\tilde{s} \in \Sigma$).
The length $\ell(\rr{v}_0)$ is determined by the type of move $f_i$:
\begin{itemize}
    \item If $f_i$ is a commutation, $\ell(\rr{v}_0)=1$;
    \item If $f_i$ is a short braid, $\ell(\rr{v}_0)=2$;
    \item If $f_i$ is a long braid, $\ell(\rr{v}_0)=3$.
\end{itemize}

The degree contribution of this specific move is $(-1)^{\ell(\rr{v}_0)}$ (by Proposition \ref{prop:degree1}).

After the move $f_i$, the word becomes $\rr{w}_i = \rr{v}_0 \tilde{s} \rr{v}_1$.
Let $\widetilde{w} = \rr{v}_0^{-1} w$. Note that $\widetilde{w}$ has reduced decompositions $\tilde{s}\rr{v}_1$ and $\rr{v}_1 s_\beta$. Since $\ell(\rr{v}_1) = \ell(u) - \ell(\rr{v}_0) < \ell(u)$, we can apply the induction hypothesis.

Decompose the transition map according to the sequence of moves
\begin{align*}
\deg(\Phi_{s_\alpha \rr{u}}^{-1}\circ \Phi_{\rr{u}s_\beta}) &=
\deg(\Phi_{s_\alpha \rr{u}}^{-1}\circ \Phi_{s_\alpha \tilde{\rr{u}}})\cdot
\deg(\Phi_{s_\alpha \tilde{\rr{u}}}^{-1}\circ \Phi_{\rr{v}_0 \tilde{s} \rr{v}_1})\cdot
\deg(\Phi_{\rr{v}_0 \tilde{s} \rr{v}_1}^{-1}\circ \Phi_{\tilde{\rr{u}} s_\beta})\cdot
\deg(\Phi_{\tilde{\rr{u}}s_\beta}^{-1}\circ \Phi_{\rr{u}s_\beta}).
\end{align*}

By Lemma \ref{lem:degree}, we have
\begin{align*}
\deg(\Phi_{s_\alpha \rr{u}}^{-1}\circ \Phi_{\rr{u}s_\beta}) &=
\deg(\Phi_{\rr{u}}^{-1}\circ \Phi_{ \tilde{\rr{u}}})\cdot
\deg(\Phi_{s_\alpha {\rr{v}_0}}^{-1}\circ \Phi_{\rr{v}_0 \tilde{s}})\cdot
\deg(\Phi_{\tilde{s} \rr{v}_1}^{-1}\circ \Phi_{{\rr{v}_1} s_\beta})\cdot
\deg(\Phi_{\tilde{\rr{u}}}^{-1}\circ \Phi_{\rr{u}}).
\end{align*}

The first and last factors cancel each other out. The second factor represents the specific move $f_i$ on the prefix; whether it is a commutation or a braid relation, Proposition \ref{prop:degree1} implies that its degree is $(-1)^{\ell(\rr{v}_0)}$. Finally, for the third term, we use the induction hypothesis on $\rr{v}_1$, which contributes $(-1)^{\ell(\rr{v}_1)}$. Since $\ell(\rr{u}) = \ell(\rr{v}_0) + \ell(\rr{v}_1)$, the total degree is $(-1)^{\ell(\rr{u})}$.
\end{proof}

In summary, the degree is given by $-1$ to the number of positions moved to the right by the first element $s_\alpha
$ of $w$. Notice that the element $s_\alpha$ changes during this process whenever a braid move occurs. This proposition is the key idea in understanding how the normal form of the Weyl group elements may be used to compute the degrees. This will be explored in the next section. 

\subsection{Normal form}\label{subsec:normalform}

Let $\mathcal{W}$ be a Weyl group and $\Sigma=\{s_1,\ldots, s_n\}$ be the set of simple roots. Once the elements of a Weyl group admit various reduced decompositions, we introduce the normal form, which represents a choice with nice computational properties. 
Here, we will follow the presentation of this subject according to Björner-Brenti \cite{BB05}. For further details, see also du Cloux \cite{duC99} and Casselman \cite{Cas02}.

Let $\mathcal{R}(w)$ be the set of all reduced decompositions of an element $w\in \mathcal{W}$. The normal form of $w\in \mathcal{W}$ is $\min \mathcal{R}(w)$, where the minimum is taken with respect to the lexicographic order if read backwards. In other words, a normal form expression is defined recursively according to the following conditions: (1) the identity corresponds to the empty sequence of generators; (2) if $w$ has normal form $w=s_1\cdots s_\ell$, then $s_\ell$ is the element of smallest index among the elements $s$ of $\Sigma$ such that $ws<w$ and $s_1\ldots s_{\ell-1}$ is the normal form of $ws_{\ell}$. This normal form as mentioned above is also called \textsc{InverseShortLex}. The other version, called \textsc{ShortLex}, is defined so that $s_1$ is the element of the smallest index such that $sw<w$ and $s_2\ldots s_{\ell}$ is the normal form of $s_1w$. We will denote by $NF(w)$ the normal form of $w\in \mathcal{W}$. 

\begin{ex}
Let us calculate the normal form of $w=21543\in S_5$ in one-line notation. We have that $\mathcal{R}(w)$ is given by
\begin{align*}
\{s_3s_4s_3s_1,s_4s_3s_4s_1,s_3s_4s_1s_3,s_3s_1s_4s_3,s_1s_3s_4s_3,s_4s_3s_1s_4,s_4s_1s_3s_4,s_1s_4s_3s_4\}.
\end{align*}
Therefore, $NF(w)=\min \mathcal{R}(w)=s_3s_4s_3s_1$.
\end{ex} 

Given an element $w\in\mathcal{W}$, the set of right descents of $w$ is defined by $D_R(w)=\{s \in \Sigma \colon \ell(ws)<\ell(w)\}$. Let $s_1$ be the first right descent of $w$, that is, the element $s\in \Sigma$ of the smallest index such that $\ell(ws)<\ell(w)$. Now, let $u=ws_1$. Let $s_2$ be the first right descent of $u$, that is, the element $s \in \Sigma$ of the smallest index such that $\ell(us)<\ell(u)$. This process continues until there are no more right descents, providing a decomposition $ws_1s_2\cdots s_\ell=1$, that is, $w=s_\ell\cdots s_2s_1$ which, by construction, is the normal form of $w$. This provides a way to obtain the normal form of an element $w\in \mathcal{W}$ whenever it is possible to determine the first right descent.

\begin{ex}
For the Lie algebra of type A, it follows that $\mathcal{W}=S_n$ is the symmetric group. We can characterize the set of right descents of a permutation by $D_R(w)=\{ i \in [n-1] \colon w_i>w_{i+1}\}$. Let $w=21543\in S_5$ be as above. In this case, $s_i=(i,i+1)$ and the right action permutes the elements in the \textbf{positions} $i$ and $(i+1)$. Then
\begin{align*}
w=\mathbf{21}543 \xrightarrow{s_1} 12\mathbf{54}3 \xrightarrow{s_3} 124\mathbf{53} \xrightarrow{s_4}
12\mathbf{43}5 \xrightarrow{s_3} 12345
\end{align*}

This process provides $ws_1s_3s_4s_3=1$, that is, $w=s_3s_4s_3s_1=NF(w)$.
\end{ex}

According to \cite{Cas02}, the problem of determining the normal form of $w=s_1\cdots s_\ell$ is equivalent to the following problem: given $w=s_1\cdots s_\ell$ in normal form and $s\in \Sigma$, find the normal form of $sw$.

Define the set of left descents of $w$ by $D_L(w)=\{s \in \Sigma \colon \ell(sw)<\ell(w)\}$. In this direction, we have the following theorem.
\begin{thm}[\cite{BB05}, Thm. 3.4.8] \label{thm:nfprod} Let $w\in W$, $NF(w)=s_{1}\cdots s_{\ell}$, and $s\in \Sigma$. Then, we have:\begin{enumerate}
\item If $s\in D_L(w)$, then there exists $1\leq j\leq \ell$ such that $NF(sw)=s_{1}\cdots \widehat{s_{j}} \cdots s_{\ell}$;
\item If $s\notin D_L(w)$, then there exist $0\leq j\leq \ell$ and $r\in\Sigma$ such that $NF(sw)=s_{1}\cdots s_{j}rs_{j+1}\cdots s_{\ell}$.
\end{enumerate}
\end{thm}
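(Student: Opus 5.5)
We argue by induction on $\ell(w)$, using the recursive description of the normal form: if $NF(w)=s_1\cdots s_\ell$, then $s_\ell=\min D_R(w)$ (smallest index) and $NF(ws_\ell)=s_1\cdots s_{\ell-1}$. The base case $w=1$ is immediate: $s\notin D_L(1)$ and $NF(s)=s$, so we take $j=0$, $r=s$. The tool that drives everything is the following ``lifting'' dichotomy. For $s,t\in\Sigma$ and $x\in\weyl$ with $\ell(sx)=\ell(x)\pm1$ and $\ell(xt)=\ell(x)\pm1$, the lengths $\ell(sxt)$ and $\ell(x)$ differ by $0$ or $2$. The value $\ell(sxt)=\ell(x)$ forces $sxt=x$, i.e.\ $sx=xt$: by the exchange property, cancelling a letter from a reduced word of $xt$ (or of $sx$) returns $x$. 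Consequently, if $\ell(sx)\neq\ell(xt)$, then $sx=xt$. We will use this in the form: if $s$ and $t$ are descents on different sides of $x$ in incompatible ways, then $sx=xt$.

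\emph{Case (1), $s\in D_L(w)$.} First we show that the right descents of $sw$ below $s_\ell$ cannot appear. Suppose $t\in D_R(sw)\setminus D_R(w)$. Then $swt<sw<w<wt$, so $\ell(swt)=\ell(wt)-2$, and the dichotomy gives $swt\neq w$; more precisely, the case $sw=wt$ is impossible because $\ell(sw)<\ell(w)<\ell(wt)$. This is a contradiction, so $D_R(sw)\subseteq D_R(w)$. We now split into two subcases.

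If $s_\ell\notin D_R(sw)$, then the dichotomy applied to $x=w$ with $s\in D_L(w)$, $s_\ell\in D_R(w)$ and $sws_\ell>sw$ yields $sw=ws_\ell$. Hence $NF(sw)=NF(ws_\ell)=s_1\cdots s_{\ell-1}$, and we take $j=\ell$.

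Otherwise $s_\ell\in D_R(sw)\subseteq D_R(w)$, so $s_\ell=\min D_R(sw)$ and $NF(sw)=NF(sws_\ell)\,s_\ell$. Moreover $s(ws_\ell)=(sw)s_\ell<sw$, so $\ell(s\cdot ws_\ell)=\ell(w)-2$ and $s\in D_L(ws_\ell)$. The induction hypothesis applied to $ws_\ell$, whose normal form is $s_1\cdots s_{\ell-1}$, deletes one letter $s_j$ with $j<\ell$. Appending $s_\ell$ gives the claim.

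\emph{Case (2), $s\notin D_L(w)$.} Here $D_R(w)\subseteq D_R(sw)$, since $wt<w$ and $sw>w$ with $swt>sw$ would force $sw\cdot t$ to have length $\ell(w)+2$, contradicting $\ell(wt)=\ell(w)-1$. A new descent $t\in D_R(sw)\setminus D_R(w)$ satisfies $\ell(swt)=\ell(w)$ by the dichotomy, hence $sw=wt$, so there is at most one such $t$.

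If such a $t$ exists and has smaller index than $s_\ell$ (or $w=1$), then $\min D_R(sw)=t$ and $swt=w$. Thus $NF(sw)=NF(w)\,t$, and we take $j=\ell$, $r=t$.

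Otherwise $\min D_R(sw)=s_\ell$, so $NF(sw)=NF(sws_\ell)\,s_\ell$. We need $s\notin D_L(ws_\ell)$. This holds because every left descent of $u=ws_\ell$ is a left descent of $w=us_\ell$ (the product is length-additive), and $s\notin D_L(w)$. The induction hypothesis then gives $NF(s\,ws_\ell)=s_1\cdots s_j r s_{j+1}\cdots s_{\ell-1}$ with $j\le\ell-1$. Appending $s_\ell$ finishes the argument.

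The main obstacle is controlling $\min D_R(sw)$ relative to $s_\ell$, i.e.\ the comparisons $D_R(sw)\subseteq D_R(w)$ in Case (1) and the ``at most one new descent, with $sw=wt$'' statement in Case (2). Both reduce to the lifting dichotomy, which is where the exchange property is really used. Once these are established, the rest is bookkeeping with the recursive definition of $NF$.
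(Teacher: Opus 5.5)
The paper does not prove this statement; it quotes it from Bj\"orner--Brenti, so your argument has to stand on its own. Its architecture is sound. You induct on $\ell(w)$ through the recursive description $s_\ell=\min D_R(w)$, $NF(ws_\ell)=s_1\cdots s_{\ell-1}$, and compare $\min D_R(sw)$ with $s_\ell$. That is exactly what is needed. The inclusions $D_R(sw)\subseteq D_R(w)$ (Case 1), $D_R(w)\subseteq D_R(sw)$ and $D_L(ws_\ell)\subseteq D_L(w)$ (Case 2), and the subcase bookkeeping are all correct.

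\textbf{The flaw is the lifting dichotomy as you state it, which is false.}
\begin{itemize}
\item The claim that $\ell(sxt)=\ell(x)$ forces $sxt=x$ fails for $x=s$, $t\neq s$, where $sxt=t\neq x$.
\item The clause ``if $\ell(sx)\neq\ell(xt)$ then $sx=xt$'' can never hold, since equal elements have equal length. In the mixed case $sx>x>xt$ one always has $\ell(sxt)=\ell(x)$ and $sxt\neq x$.
\end{itemize}
The true statement is the same-sign version: if $\ell(sx)=\ell(xt)$ and $\ell(sxt)=\ell(x)$, then $sxt=x$.
\begin{itemize}
\item \emph{Ascent case.} Take a reduced word $s\mathbf{x}$ of $sx$. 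Since $sxt<sx$, exchange deletes one letter. Deleting a letter of $\mathbf{x}$ would make $xt=s\cdot(sxt)$ shorter than $x$, contradicting $xt>x$. So $s$ is deleted and $sxt=x$. This is precisely where the hypothesis your justification (``cancelling a letter \dots\ returns $x$'') silently drops is used.
\item \emph{Descent case.} Apply the ascent case to $y=xt$: both $s$ and $t$ are ascents of $y$, and $\ell(syt)=\ell(sx)=\ell(y)$.
\end{itemize}

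Both places where you actually invoke the lemma are same-sign instances:
\begin{itemize}
\item In Case 1 with $s_\ell\notin D_R(sw)$, $s$ and $s_\ell$ are both descents of $w$ and $\ell(sws_\ell)=\ell(w)$, so $sws_\ell=w$.
\item In Case 2, $s$ and the new descent $t$ are both ascents of $w$ and $\ell(swt)=\ell(w)$, so $swt=w$.
\end{itemize}
So once the lemma is restated, the proof is complete.

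Also repair the first step of Case 1. The chain $swt<sw<w<wt$ gives $\ell(wt)-\ell(swt)=3$, not $2$. The contradiction is simply that $wt=s\cdot(swt)$, so these lengths must differ by exactly one. More directly, $\ell(wt)\le\ell(swt)+1=\ell(w)-1$, so $t\in D_R(w)$.
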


\subsection{Algorithm}\label{subsec:algorithm}

Fix the reduced decomposition for $w \in \mathcal{W}$ given by its normal form, i.e., $\rr{w}=NF(w)=s_1\cdots s_\ell$. 
Consider a covering pair $(w,w')$ and denote $\rr{w}'=NF(w')$.
Since $w'\leq w$, by the strong exchange property, there exists a unique $I\in [\ell]$ such that $w'=s_1\ldots \widehat{s_I}\cdots s_\ell$. We denote this reduced decomposition by $\rrt{w}{I}$.
In this section, we present an algorithm to compute $\deg\left({\Phi}_{\rr{w}'}^{-1}\circ \Phi_{\rrt{w}{I}}\right)$.

We can write $\rrt{w}{I}=\rr{w}_L\cdot \rr{w}_R$, where $\rr{w}_L=s_1\cdots s_{I-1}$ and $\rr{w}_R=s_{I+1}\cdots s_\ell$. Note that, in particular, the subexpressions $\rr{w}_L$ and $\rr{w}_R$ are also in their normal forms. 

We need to obtain $\rr{w}'$ by applying commuting or braid relations to move elements from $\rr{w}_L$ to $\rr{w}_R$ (and conversely, if needed). Starting from $s_{I-1}$ and going down to $s_{1}$ (eventually), each element of $\rr{w}_L$ is inserted, one at a time, into the right term, which is initially $\rr{w}_R$. At each step, this right term is replaced by its corresponding normal form until, at some point in the process, the normal form of $w'$  is obtained. At each step, the number of moves required to reach the new normal form is determined by Theorem \ref{thm:nfprod}(2). Finally, the degree is computed using Proposition \ref{prop:degree}. Algorithm \ref{alg:degree_calculation} summarizes the procedure.

\begin{rem} Since $\ell(w)=\ell(w')+1$, we conclude that for each reflection $s_i$ in $\rr{w}_L$, $i\in[I-1]$, we have $s_i \notin D_L(s_{i+1}\cdots s_{I-1}\rr{w}_R)$; otherwise, by Theorem \ref{thm:nfprod}(1), we would have $\ell(w')<\ell(w)-1$. This justifies the use of item (2) of Theorem \ref{thm:nfprod}.
\end{rem}

\begin{rem}
     The algorithm does not need to go through all the elements of $ \rr{w}_L$. It can stop when it reaches $s_p$ where $p \in [I-1]$ is the smallest integer such that $s_p$ in $NF(w)$ is different from the $p$-th element of $NF(w')$.
\end{rem}

\begin{algorithm}
    \SetAlgoLined
    \KwData{$w,w'\in \weyl$ with $w$ covering $w'$}
    \KwResult{$\deg\left({\Phi}_{\rr{w}'}^{-1}\circ \Phi_{\rrt{w}{I}}\right)\in \{-1,1\}$}
    \BlankLine
    $s_1\cdots s_\ell \gets NF(w)$\;
    Let $I$ be the unique index such that $w' =s_1\ldots \widehat{s_I}\cdots s_\ell$\;
    \If{$NF(w')=s_1\ldots \widehat{s_I}\cdots s_\ell$}{    
        \Return $1$\;
    }
    $u=u_1\cdots u_k \gets s_{I+1}\cdots s_\ell$\;
    $\text{degree} \gets 1$\;
    Let $p$ be the smallest index in $[I-1]$ such that the $p$-th element of $NF(w)$ differs from the $p$-th element of $NF(w')$\;
    \For{$s$ from $s_{I-1}$ down to $s_p$}{
        Find $j\in [0,k]$ such that $NF(su) = u_1\cdots u_j r u_{j+1}\cdots u_{k}$\; 
        $\text{degree} \gets \text{degree} \cdot (-1)^{j}$\; 
        $u=u_1\cdots u_{k+1} \gets NF(su)$\;
        $k \gets k+1$\;
    }
    \Return $\text{degree}$\;
\caption{Degree Calculation}\label{alg:degree_calculation}
\end{algorithm}

\begin{ex} Consider the pair $w=436125$ and $w'=432165$ inside $S_6$. Lemma 2.2 of \cite{BB05} implies that $w$ covers $w'$ since $w=w'\cdot (3,5)$ and $1 \notin [2,6]$. The normal forms of $w$ and $w'$ are
\begin{align*}
\rr{w}&=NF(w) = s_5s_2s_3s_4s_1s_2s_3s_1, \\
\rr{w}'&=NF(w')= s_5s_1s_2s_3s_1s_2s_1.
\end{align*}
Moreover, $\rrt{w}{I}=\rrt{w}{4}= s_5s_2s_3\widehat{s_4}s_1s_2s_3s_1$ so that $\rr{w}_L=s_5s_2s_3$ and $\rr{w}_R=s_1s_2s_3s_1$. Note that $NF(w')\neq \rrt{w}{I}$. Since the first simple reflection ($s_5$) of $\rr{w}'$ and $\rrt{w}{I}$ coincide, and $\ell(\rr{w}_L)=3$, the algorithm will require two iterations. 
\begin{itemize}
\item Set $v=s_1s_2s_3s_1$ and $s=s_3$. Since $NF(sv)=s_1s_2s_3s_2s_1$, we obtain $j=3$ and $\text{degree}=(-1)^3=-1$. 
\item Set $v=s_1s_2s_3s_2s_1$ and $s=s_2$. Since $NF(sv)=s_1s_2s_3s_1s_2s_1$, we obtain $j=3$ and $\text{degree}=(-1)^3=-1$.
\end{itemize}
Hence, $\deg\left({\Phi}_{\rr{w}'}^{-1}\circ \Phi_{\rrt{w}{I}}\right)=(-1)\cdot(-1)=+1$. We may go further and compute the coefficient $c(w,w')$ according to Equation \eqref{eq:coefficient}: 
\begin{align*}
c(w,w^{\prime})&=(-1)^{I}\cdot\deg\left(\Phi_{\rr{w}'}^{-1}\circ \Phi_{\rrt{w}{I}} \right)\cdot(1+(-1)^{\height(\gamma^{\vee})}) \\
&=(-1)^4(+1)(1+(-1)^2)=+1.
\end{align*}
\end{ex}

\begin{rem}
    The algorithm also works if we proceed from right to left, that is, inserting elements from $\rr{w}_R$ into $\rr{w}_L$ until it reaches the normal form.
\end{rem}

\subsection{Computational Implementation}\label{subsec:computational}

The computation of the Poincaré polynomial for a Flag manifold $\mathbb{F}_\Theta = G/P_\Theta$ requires the explicit calculation of the coefficients $c(w,w')$, for each pair $w,w' \in \weyl$, as defined in Equation \eqref{eq:coefficient}.

Our implementation was developed in the open-source software SageMath~\cite{Sage}. This platform was chosen due to its Python-based environment and native support for symbolic computation. Specifically, we utilized its \texttt{WeylGroup} and \texttt{ChainComplex} libraries. The \textbf{\texttt{WeylGroup}} library provided essential functions to determine the covers of an element $w \in \weyl$, compute the height of coroots, and find the normal form of elements. Subsequently, the \textbf{\texttt{ChainComplex}} library allowed us to compute the Betti numbers from the boundary map. These numbers were then used to symbolically construct the final Poincaré polynomial.


The computational cost increases significantly with the dimension of the group. The most computationally intensive task is the construction of the boundary matrices; once these are determined, subsequent calculations, such as computing the Betti numbers, are relatively less consuming.

To address this bottleneck, we leveraged the fact that the primary computations for each element of the Weyl group are independent, making the problem inherently parallelizable. We implemented a multi-threaded algorithm to distribute the workload across multiple CPU cores. This parallel strategy, combined with other algorithmic optimizations and the computing resources of the National Center for High Performance Computing in São Paulo (CENAPAD-SP), enabled us to extend our calculations to the groups $B_7$, $C_7$, $D_7$, and $E_7$.

\begin{rem}
An optimization involves efficiently identifying the set of minimal coset representatives, $\weyl^\Theta = \{w \in \weyl \colon \ell(ws) > \ell(w) \text{ for all } s \in \Theta \}$. Our method relies on the property that every element $w \in \weyl$ is a minimal representative for a \textit{unique maximal} subset of simple roots, which is given by $\Delta_w = \Sigma \setminus D_R(w)$, where $D_R(w)$ is the set of right descents of $w$. This leads to a simple test: an element $w$ belongs to $\weyl^\Theta$ if and only if $\Theta \subseteq \Delta_w$. In practice, we precompute the set $\Delta_w$ for every $w \in \weyl$ and store it as a bit array. This allows the subset check to be performed with a fast bitwise operation.
\end{rem}

\section{Orientability of exceptional cases}\label{sec:orientability}

A direct application of the computation of Poincaré polynomials (see Section \ref{sec:formulas})  is the determination of the orientability of the real flag manifolds, at least for those of types $F_4,E_6$ and $E_7$. Once these polynomials are obtained, the question of orientability can be addressed in a straightforward manner by comparing the degree of the polynomial with the dimension of the corresponding flag manifold. This provides an effective and conceptually clear criterion, reducing a geometric property to an explicit algebraic verification. Moreover, agreement with previously known results confirms the validity of this approach and illustrates its effectiveness in capturing the topological structure of these spaces.

Table \ref{fig:table_orientability} presents the list of values for the real flag manifolds of split forms of $F_4$, $E_6$, and $E_7$. It is already known that the maximal flag manifolds are orientable (\cite{Koc95}, Cor. 1.1.10, \cite{PatraoSanMartinSantosSeco2012} Thm. 3.3). Regarding the partial flag manifolds, we have the following classification.

\begin{thm}
Let $\mathbb{F}_{\Theta}$ be a partial flag manifold of a split real form with $\Theta \subset \Sigma$. The orientable real flag manifolds are given by:
\begin{itemize}
\item If it is of type $F_4$, then $\Theta$ has connected type $A_2$ or $C_3$.
\item If it is of type $E_6$, then $\Theta$ has connected type $A_2$, $A_2\times A_2$, $A_4$, $D_4$ or $D_5$.
\item If it is of type $E_7$, then either $\Theta$ has connected type $A_2$, $A_1\times A_1\times A_1\times A_2$, $A_2\times A_2$, $A_4$, $D_4$, $D_5$, $A_2\times A_4$, $A_1\times A_2\times A_3$, $A_6$, $E_6$ or $\Theta$ is one of the subsets $\{\alpha_2,\alpha_5,\alpha_7\}$, $\{\alpha_2,\alpha_4,\alpha_5,\alpha_7\}$, $\{\alpha_2,\alpha_5,\alpha_6,\alpha_7\}$,$\{\alpha_2,\alpha_4,\alpha_5,\alpha_6,\alpha_7\}$.
\end{itemize}
\end{thm}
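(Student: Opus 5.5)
The approach is the criterion announced at the start of this section. $\mathbb{F}_\Theta$ is a closed connected manifold of dimension $d=\dim\mathbb{F}_\Theta=\ell(w_0)-\ell(w_0^\Theta)$, the number of positive roots not in the span of $\Theta$. It is orientable if and only if $H_d(\mathbb{F}_\Theta,\Z)\cong\Z$, and otherwise $H_d=0$. In terms of the Poincaré polynomials of Section \ref{sec:formulas}, this says $\mathbb{F}_\Theta$ is orientable exactly when $\deg P^\Theta(t)=d$, i.e. the leading coefficient sits in degree $d$. In the cellular complex $(\mathcal{C}^\Theta,\partial^\Theta)$ there is a unique top cell $\mathcal{S}^\Theta_{w^\Theta}$, where $w^\Theta$ is the longest element of $\weyl^\Theta$. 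So $\beta_d=1$ precisely when $\partial^\Theta\mathcal{S}^\Theta_{w^\Theta}=0$, i.e. when $c(w^\Theta,w')=0$ for every $w'\in\weyl^\Theta$ covered by $w^\Theta$.

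The argument then proceeds in three steps.

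First, for each $\Theta\subset\Sigma$ of types $F_4,E_6,E_7$, compute $d=|\Pi^+|-|\Pi^+_\Theta|$. Here $|\Pi^+|=24,36,63$ respectively, and $|\Pi_\Theta^+|$ is the sum of the numbers of positive roots of the connected components of $\Theta$ ($A_k$: $k(k+1)/2$, $B_k,C_k$: $k^2$, $D_k$: $k(k-1)$, $E_6$: $36$).

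Second, read off $\deg PF_4^\Theta$, $\deg PE_6^\Theta$ and $\deg PE_7^\Theta$ from the tables; for the $A$-subdiagram entries, use the formula for $PA^\Theta_n$ of \cite{He2020}. Then compare with $d$. For example, in $F_4$ with $\Theta=C_3$ we get $d=24-9=15=\deg(t^{15}+1)$, so $\mathbb{F}_\Theta$ is orientable. With $\Theta=B_3$ we get $d=15$ but degree $11$, so it is not. This comparison is carried out over all subsets, grouped by connected type as in the tables. For $E_7$, the subsets containing $\{\alpha_2,\alpha_5,\alpha_7\}$ are treated individually, which explains the four exceptional subsets in the statement.

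Third, as an independent cross-check that avoids relying on the tabulated polynomials, verify the same dichotomy directly from Theorem \ref{thm:split}. Each coefficient $c(w^\Theta,w')$ equals $\pm(1+(-1)^{\height(\gamma^\vee)})$, so the top cell is a cycle iff every covering reflection $\gamma$ has $\height(\gamma^\vee)$ odd. In that case no sign computation is needed at all. This matches the classical criterion that $\mathbb{F}_\Theta$ is orientable iff $\sum_{\alpha\in\Pi^+\setminus\Pi_\Theta}\langle\alpha,\beta^\vee\rangle$ is even for all $\beta\in\Theta$ (\cite{PatraoSanMartinSantosSeco2012}). That criterion can be checked by hand using the $\Theta$-isotypic decomposition of $\mathfrak{n}^-_\Theta$, and it gives agreement with known results.

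The main obstacle is the bookkeeping for $E_7$: there are $128$ subsets, and some of them have identical connected type but different embeddings, with different answers, such as $A_1^3$ versus $\{\alpha_2,\alpha_5,\alpha_7\}$. The plan is to organize $E_7$ by embedding class rather than abstract type, and then to confirm the borderline cases with the parity criterion of the third step.
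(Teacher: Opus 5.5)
Your Steps 1 and 2 are the paper's argument: compare $\beta^{\mathrm{top}}=\deg P^\Theta$ with $\dim\mathbb{F}_\Theta=|\Pi^+|-|\langle\Theta\rangle^+|$ as in Table \ref{fig:table_orientability}, splitting $E_7$ by whether $\{\alpha_2,\alpha_5,\alpha_7\}\subset\Theta$. That part is fine.

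The gap is in Step 3, which you plan to use to confirm borderline cases. The criterion you quote is vacuous. For $\beta\in\Theta$ we have
\[
\sum_{\alpha\in\Pi^+}\langle\alpha,\beta^\vee\rangle=\langle 2\rho,\beta^\vee\rangle=2=\langle 2\rho_\Theta,\beta^\vee\rangle=\sum_{\alpha\in\langle\Theta\rangle^+}\langle\alpha,\beta^\vee\rangle,
\]
where $\rho$ and $\rho_\Theta$ are the half-sums of $\Pi^+$ and $\langle\Theta\rangle^+$. So your sum over $\Pi^+\setminus\langle\Theta\rangle^+$ is $0$ for every $\Theta$ and every $\beta\in\Theta$, and the test declares every flag manifold orientable. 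Already $\Theta=\{\alpha_2\}$ in $A_2$, i.e.\ $\mathbb{RP}^2$, gives $-1+1=0$. It would also contradict your Step 2, e.g.\ for $F_4$ with $\Theta=B_3$.

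The quantifier must run over the simple roots \emph{outside} $\Theta$. For $\alpha\in\Theta$, the element $m_\alpha=\exp(\pi A_\alpha)$ lies in the identity component of $K_\Theta$, so only $\alpha\in\Sigma\setminus\Theta$ can reverse orientation. Such $m_\alpha$ acts on $T_{b_\Theta}\mathbb{F}_\Theta$ with determinant $(-1)^{\sum_{\beta\in\Pi^+\setminus\langle\Theta\rangle^+}\langle\beta,\alpha^\vee\rangle}$. Since $\langle 2\rho,\alpha^\vee\rangle=2$, the correct condition is exactly the paper's \eqref{eq:orientability}: $S(\alpha,\Theta)$ is even for all $\alpha\in\Sigma\setminus\Theta$.

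With this fix, your top-cell remark becomes a genuinely different and table-free route. Let $w_\Theta$ be the longest element of $\weyl_\Theta$, so $w^\Theta=w_0w_\Theta$. The map $w\mapsto w_0ww_\Theta$ is an order-reversing involution of $\weyl^\Theta$. Hence the elements of $\weyl^\Theta$ covered by $w^\Theta$ are $w'=w_0s_\alpha w_\Theta$ with $\alpha\in\Sigma\setminus\Theta$. Then $s_\gamma=w'^{-1}w^\Theta=w_\Theta s_\alpha w_\Theta$, so $\gamma=w_\Theta(\alpha)$. Using $w_\Theta\rho=\rho-2\rho_\Theta$,
\[
\height(\gamma^\vee)=\langle\rho,w_\Theta\alpha^\vee\rangle=\langle\rho-2\rho_\Theta,\alpha^\vee\rangle=1-S(\alpha,\Theta).
\]
Theorem \ref{thm:split} then gives $c(w^\Theta,w')=\pm\bigl(1-(-1)^{S(\alpha,\Theta)}\bigr)$, which vanishes if and only if $S(\alpha,\Theta)$ is even. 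This derives \eqref{eq:orientability} from the cellular complex with no sign computation. It reduces the theorem to evaluating $S(\alpha,\Delta)$ on the components $\Delta$ of $\Theta$.

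For that evaluation you need one ingredient the paper supplies. The case analysis of \cite{PatraoSanMartinSantosSeco2012} omits $\Delta=A_n$ attached to $\alpha$ through an interior node, where $S(\alpha,A_n)=-k(n-k+1)$. This case decides, for instance, $A_5\subset E_6$, $A_6\subset E_7$ and $A_4\times A_2\subset E_7$. The paper instead reads the classification off the computed Poincaré polynomials and uses \eqref{eq:orientability} only as a consistency check.
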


\begin{table}[ht]
    \centering
    \caption{Comparison between the degree of the Poincaré polynomial ($\beta^{\mathrm{top}}$) with the flag dimension ($\dim \mathbb{F}_{\Theta}$) in terms of the connected type of $\Theta$. In the case of $E_7$, the $*$ indicates that the corresponding $\Theta$ does not contain the set $\{\alpha_2,\alpha_5,\alpha_7\}$.
    }\label{fig:table_orientability}
    
    \begin{tabular}[t]{p{0.4\linewidth} p{0.55\linewidth}}
    \begin{minipage}[t]{\linewidth}
        \centering
        \textbf{\quad $F_4$} 
        \vspace{0.3em}
        
        \begin{tabular}{ccc}
            \toprule
            $\Theta$ & $\beta^{\mathrm{top}}$ & $\dim \mathbb{F}_{\Theta}$ \\
            \midrule
            $A_1$ & 21 & 23 \\
            $A_1\times A_1$ & 18 & 22 \\
            \textbf{$A_2$} & \textbf{21} & \textbf{21} \\
            $B_2$ & 18 & 20 \\
            $A_2\times A_1$ & 18 & 20 \\
            $B_3$ & 11 & 15 \\
            \textbf{$C_3$} & \textbf{15} & \textbf{15} \\
            \bottomrule
        \end{tabular}

        \vspace{1.5em}

        \textbf{\quad $E_6$} 
        \vspace{0.3em}
        
        \begin{tabular}{ccc}
            \toprule
            $\Theta$ & $\beta^{\mathrm{top}}$ & $\dim \mathbb{F}_{\Theta}$ \\
            \midrule
            $A_1$ & 33 & 35 \\
            $A_1\times A_1$ & 30 & 34 \\
            \textbf{$A_2$} & \textbf{33} & \textbf{33} \\
            $A_1\times A_1\times A_1$ & 27 & 33 \\
            $A_2\times A_1$ & 30 & 32 \\
            $A_2\times A_1 \times A_1$ & 27 & 31 \\
            $A_3$ & 26 & 30 \\
            \textbf{$A_2\times A_2$} & \textbf{30} & \textbf{30} \\
            $A_3\times A_1$ & 23 & 29 \\
            $A_2\times A_2\times A_1$ & 27 & 29 \\
            \textbf{$A_4$} & \textbf{26} & \textbf{26} \\
            $A_4\times A_1$ & 23 & 25 \\
            \textbf{$D_4$} & \textbf{24} & \textbf{24} \\
            $A_5$ & 15 & 21 \\
            \textbf{$D_5$} & \textbf{16} & \textbf{16} \\
            \bottomrule
        \end{tabular}
    \end{minipage}
    & 
    \begin{minipage}[t]{\linewidth}
        \centering
        \textbf{$E_7$} 
        \vspace{0.3em}
        
        \begin{tabular}{ccc}
            \toprule
            $\Theta$ & $\beta^{\mathrm{top}}$ & $\dim \mathbb{F}_{\Theta}$ \\
            \midrule
            $A_1$ & 60 & 62 \\
            $A_1 \times A_1$ & 59 & 61 \\
            ${A_1 \times A_1 \times A_1}^{\ast}$ & 54 & 60 \\
            $\{\alpha_2,\alpha_5,\alpha_7\}$ & \textbf{60} & \textbf{60} \\
            \textbf{$A_2$} & \textbf{60} & \textbf{60} \\
            $A_1 \times A_1 \times A_1 \times A_1$ & 57 & 59 \\
            $A_1 \times A_2$ & 58 & 59 \\
            $A_1 \times A_1 \times A_2$ & 54 & 58 \\
            \textbf{$A_1 \times A_1 \times A_1 \times A_2$} & \textbf{57} & \textbf{57} \\
            \textbf{$A_2 \times A_2$} & \textbf{57} & \textbf{57} \\
            $A_3$ & 53 & 57 \\
            $A_1 \times A_2 \times A_2$ & 54 & 56 \\
            ${A_1 \times A_3}^{\ast}$ & 50 & 56 \\
            $\{\alpha_2,\alpha_4,\alpha_5,\alpha_7\}$ & \textbf{56} & \textbf{56} \\
            $\{\alpha_2,\alpha_5,\alpha_6,\alpha_7\}$ & \textbf{56} & \textbf{56} \\
            $A_1 \times A_1 \times A_3$ & 53 & 55 \\
            $A_2 \times A_3$ & 50 & 54 \\
            \textbf{$A_1 \times A_2 \times A_3$} & \textbf{53} & \textbf{53} \\
            \textbf{$A_4$} & \textbf{53} & \textbf{53} \\
            $A_1 \times A_4$ & 50 & 52 \\
            \textbf{$D_4$} & \textbf{51} & \textbf{51} \\
            $A_1 \times D_4$ & 48 & 50 \\
            \textbf{$A_2 \times A_4$} & \textbf{50} & \textbf{50} \\
            ${A_5}^{\ast}$ & 42 & 48 \\
            $\{\alpha_2,\alpha_4,\alpha_5,\alpha_6,\alpha_7\}$ & \textbf{48} & \textbf{48} \\
            $A_1 \times A_5$ & 45 & 47 \\
            \textbf{$D_5$} & \textbf{43} & \textbf{43} \\
            $A_1 \times D_5$ & 40 & 42 \\
            \textbf{$A_6$} & \textbf{42} & \textbf{42} \\
            $D_6$ & 29 & 33 \\
            \textbf{$E_6$} & \textbf{27} & \textbf{27} \\
            \bottomrule
        \end{tabular}
        \end{minipage}
    \end{tabular}
\end{table}

The orientability of flag manifolds has already been studied by Kocherlakota \cite{Koc95} and Patrão \textit{et al.}\cite{PatraoSanMartinSantosSeco2012}. In the case of flag manifolds of split real forms, the following criterion is obtained (cf. \cite{PatraoSanMartinSantosSeco2012}, Eq. (5)): the flag $\mathbb{F}_{\Theta}$ is orientable if and only if for every root $\alpha \in \Sigma\setminus \Theta$,
\begin{align}\label{eq:orientability}
S(\alpha,\Theta)=\sum_{\beta \in \langle \Theta \rangle^+} \langle \alpha^\vee, \beta \rangle \equiv 0 \mod 2.
\end{align}

In \cite{PatraoSanMartinSantosSeco2012}, there is a section dedicated to the case of flags of split real forms where $S(\alpha,\Delta)$ is computed for the possible connected components $\Delta \subset \Theta$. If $\alpha$ is not connected to $\Delta$, then $S(\alpha,\Delta)=0$, and the only contribution occurs when $\alpha$ is connected to a root of $\Theta$. More specifically, if $\beta \in \langle \Theta \rangle^+$ is written as $\beta=c\delta + \gamma$ where $\delta$ is the only root of $\Delta$ connected to $\alpha$, then $\langle\alpha^\vee,\gamma\rangle =0$ and the only contribution is given by $\langle\alpha^\vee,\beta\rangle =c\langle\alpha^\vee,\delta\rangle$.

However, although the article intended to be exhaustive, one case that appears frequently in the context of the exceptional Lie algebras $E_6$, $E_7$, and $E_8$ was not addressed, namely, the cases where $\Delta$ is of type $A_n$, $3\leq n\leq 7$, and the root $\alpha \in \Sigma \setminus \Theta$ is connected to the root $\delta=\alpha_4$.

The general result is the following: for a diagram $\Delta=A_n$ and a root $\alpha \in \Sigma \setminus \Theta$ connected to the root $\delta$ in position $k$, $1\leq k\leq n$,
$$S(\alpha,\Delta)=-k(n-k+1).$$

In particular, if $k=1$ or $k=n$, $S(\alpha,\Delta)=-n$. 
Note also that, in terms of the mod $2$-congruence, $S(\alpha,\Delta)\not\equiv 0 \mod 2$ if and only if $k$ and $n$ are odd.

As an application, let us work out some examples. 

Consider the cases where $\Theta$ is of type $A_5$ and $A_6$, respectively, in $E_6$ and $E_7$. The corresponding diagrams are given by $\dynkin E{oxoooo}$ and $\dynkin E{oxooooo}$.
In both cases, the unique root $\alpha \in \Sigma \setminus \Theta$ is $\alpha=\alpha_2$, which is connected to $\delta=\alpha_4$. In $E_6$, we have $S(\alpha_2,\Delta=A_5)=-3(5-3+1)=-9\equiv 1 \mod 2$. In $E_7$, $S(\alpha_2,\Delta=A_6)=-3(6-3+1)=-12\equiv 0 \mod 2$. Hence, the corresponding flags are respectively non-orientable and orientable.

Now, if $\Theta$ is of type $A_4\times A_2$ in $E_7$, its diagram is given by $\dynkin E{ooooxoo}$. The unique root $\alpha \in \Sigma \setminus \Theta$ is $\alpha=\alpha_5$, which connects to the $A_4$ component through $\delta=\alpha_4$, for which $S(\alpha_5,\Delta=A_4)=-3(4-3+1)=-6\equiv 0 \mod 2$, and to the $A_2$ component through $\delta=\alpha_6$, for which $S(\alpha_5,\Delta=A_2)=-1(2-1+1)=-2\equiv 0 \mod 2$. Hence, this is an orientable flag.

\section{Final Remarks}\label{sec:final}

In a broader perspective, this work can be seen as the culmination of a series of papers devoted to the computation of the homology of real flag manifolds associated with normal forms. It also highlights the important role played by computational methods and the progress they have made possible: while earlier approaches relied on constructing permutation models for each individual type, we now have unified framework that applies to all types considered here. Nevertheless, several important problems remain open. Most notably, the computation of the homology groups of real flag manifolds of type $E_8$ is still beyond reach, mainly due to current computational limitations. Furthermore, the study of real flag manifolds associated with non split forms emerges as a natural next step, as very little is presently known about their topology. Another open direction concerns the Poincaré polynomials of types $B$, $C$, and $D$, for which the proposed formulas have so far been verified only computationally for certain values of $n$ and still lack a complete theoretical proof. Additionally, we were unable to obtain a closed formula for the cases in type $D$ where $\Theta$ contains both $a_{n-1}$ and $a_n$. Finally, we note that there are still very few references in the literature on the topology of exceptional flag manifolds, which underscores both the novelty of the present results and the need for further investigation in this area.

\bibliographystyle{amsplain} 
\bibliography{biblio}

\end{document}